\numberwithin{equation}{section}
\newtheorem{theorem}[equation]{Theorem}
\newtheorem{proposition}[equation]{Proposition}
\newtheorem{lemma}[equation]{Lemma}
\newtheorem{corollary}[equation]{Corollary}
\newtheorem{problem}[equation]{Problem}
\theoremstyle{definition}
\newtheorem{remark}[equation]{Remark}
\newcommand{\arxiv}[1]{\href{http://arxiv.org/abs/#1}{{\tiny\tt arXiv:#1}}}
\newcommand{\DOI}[1]{\href{http://doi.org/#1}{\color{purple}{\tiny\tt DOI:#1}}}
\newcommand{\defn}[1]{\emph{#1}}
\newcommand{\cB}{\mathcal{B}}
\newcommand{\bC}{\mathbf{C}}
\newcommand{\cE}{\mathcal{E}}
\newcommand{\bF}{\mathbf{F}}
\newcommand{\fF}{\mathfrak{F}}
\newcommand{\cG}{\mathcal{G}}
\newcommand{\fG}{\mathfrak{G}}
\newcommand{\rK}{\mathrm{K}}
\newcommand{\fM}{\mathfrak{M}}
\newcommand{\rM}{\mathrm{M}}
\newcommand{\cP}{\mathcal{P}}
\newcommand{\fP}{\mathfrak{P}}
\newcommand{\fS}{\mathfrak{S}}
\newcommand{\bZ}{\mathbf{Z}}
\newcommand{\fa}{\mathfrak{a}}
\newcommand{\bm}{\mathbf{m}}
\newcommand{\bn}{\mathbf{n}}
\let\ol\overline
\renewcommand{\phi}{\varphi}
\DeclareMathOperator{\tr}{tr}
\DeclareMathOperator{\End}{End}
\DeclareMathOperator{\Span}{span}
\DeclareMathOperator{\Hom}{Hom}
\DeclareMathOperator{\Isom}{Isom}
\DeclareMathOperator{\Ob}{Ob}
\DeclareMathOperator{\im}{im}
\DeclareMathOperator{\dom}{dom}
\DeclareMathOperator{\diag}{diag}
\DeclareMathOperator{\srk}{\mathrm{srk}}
\DeclareMathOperator{\rank}{\mathrm{rk}}
\DeclareMathOperator{\Ind}{Ind}
\DeclareMathOperator{\Res}{Res}
\DeclareMathOperator{\Rep}{Rep}
\newcommand{\id}{\mathrm{id}}
\newcommand{\HS}{\mathrm{HS}}
\newcommand{\VS}{\mathrm{VS}}
\newcommand{\Mod}{\mathrm{Mod}}
\newcommand{\GL}{\mathbf{GL}}
\newcommand{\bone}{\mathbf{1}}
\newcommand{\transpose}[1]{\, {}^{t}#1}
\newcommand{\qbinom}[2]{\genfrac{[}{]}{0pt}{}{#1}{#2}_q}
\title{Representations of finite matrix monoids}
\author{Nate Harman}
\address{Department of Mathematics, University of Georgia, Athens, GA, USA}
\email{\href{mailto:nharman@uga.edu}{nharman@uga.edu}}
\urladdr{\url{https://www.nateharman.com/}}
\thanks{NH was supported by NSF grant DMS-2401515}
\author{Andrew Snowden}
\thanks{AS was supported by NSF grant DMS-2301871.}
\address{Department of Mathematics, University of Michigan, Ann Arbor, MI, USA}
\email{\href{mailto:asnowden@umich.edu}{asnowden@umich.edu}}
\urladdr{\url{http://www-personal.umich.edu/~asnowden/}}
\author{Elad Zelingher}
\address{Department of Mathematics, University of Michigan, Ann Arbor, MI, USA}
\email{\href{mailto:eladz@umich.edu}{eladz@umich.edu}}
\urladdr{\url{https://elad.zelingher.com/}}
\date{December 1, 2025}
\begin{document}

\begin{abstract}
Let $\fM_n$ be the multiplicative monoid of $n \times n$ matrices over a finite field. The monoid algebra $\bC[\fM_n]$ has been studied for several decades. One of the important early results is Kov\'acs' theorem that the two-sided ideal spanned by matrices of rank at most $r$ has a unit. Our most significant result is an explicit formula for this unit. Prior to our work, such a formula was only known in a few examples. We also study the module theory of $\bC[\fM_n]$. We explicitly describe the simple modules, and establish induction and restriction rules. We show that the simple decomposition of an arbitrary module can be determined using character theory of finite general linear groups; this relies on a Pieri rule of Gurevich--Howe. We also establish a version of Schur--Weyl duality for $\bC[\fM_n]$. Many of these results hold over more general coefficient fields.
\end{abstract}

\maketitle
\tableofcontents

\section{Introduction} \label{s:intro}

Let $\bF$ be a finite field with $q$ elements, let $\fM_n$ be the monoid of all $n \times n$ matrices with entries in $\bF$ under multiplication, and let $k[\fM_n]$ be the monoid algebra of $\fM_n$ over a field $k$; we assume throughout that $q$ is non-zero in $k$. The algebra $k[\fM_n]$ has been an important object of study within the representation theory of monoids for the last several decades; see, for example, \cite{Faddeev, Kovacs, Kuhn2, OP, Steinberg}. In this paper, we establish some new results about this algebra and its module theory, motivated by applications to tensor categories (see \S \ref{ss:motivation}).

\subsection{The unit formula}

Kov\'acs \cite{Kovacs} proved an important structure theorem (previously announced by Faddeev \cite{Faddeev}): letting $\fG_n=\GL_n(\bF)$, there is an algebra isomorphism
\begin{displaymath}
k[\fM_n] \cong \prod_{r=0}^n \rM_{N(r)}(k[\fG_r]),
\end{displaymath}
where $N(r)$ is the number of $r$-dimensional subspaces of $\bF^n$. As a corollary, if $\# \fG_n$ is non-zero in $k$ then $k[\fM_n]$ is semi-simple; this was independently proved by Okni\'nski and Putcha \cite{OP} for $k=\bC$. The subspace $\fa_{n,r}$ of $k[\fM_n]$ spanned by matrices of rank at most $r$ is a two-sided ideal, and the key to Kov\'acs' proof is showing that $\fa_{n,r}$ has a unit.

Our most significant result is an explicit formula for this unit. To state it, we must introduce a piece of terminology: a matrix $m$ is \defn{semi-idempotent} if its eigenvalues are~0 and~1, and its 1-eigenspace coincides with the generalized 1-eigenspace. In other words, in an appropriate basis, $m$ is block diagonal, where one block is the identity and the other block is nilpotent. We define the \defn{stable rank} of a matrix $m$, denoted $\srk{m}$, to be the rank of $m^i$ for $i \gg 0$. For a semi-idempotent matrix, the stable rank is the size of the identity block. We can now state our unit formula (see also Theorem~\ref{thm:unit}):

\begin{theorem}
The unit of $\fa_{n,r}$ is the element
\begin{displaymath}
q^{-\left(n-1\right)r} \mu(r) \sum_m \mu(\srk{m}) \qbinom{n-1-\rank m}{r - \rank m} [m],
\end{displaymath}
where the sum is over semi-idempotent matrices $m$, $\mu(i) = (-1)^i q^{\binom{i}{2}}$, and the first bracketed expression is the $q$-binomial coefficient.
\end{theorem}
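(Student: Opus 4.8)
Write $e_{n,r}$ for the element displayed in the theorem. The plan is a direct verification that $e_{n,r}$ is the unit of $\fa_{n,r}$. By Kov\'acs' theorem $\fa_{n,r}$ already possesses a (necessarily unique, central) unit, and $e_{n,r}$ visibly lies in $\fa_{n,r}$, since $\qbinom{n-1-\rank m}{r-\rank m}$ vanishes unless $\rank m \le r$. Hence it suffices to show that $e_{n,r}$ is a left identity on the spanning set $\{[m'] : \rank m' \le r\}$ of $\fa_{n,r}$. Given $m'$ with $\rank m' \le r$, choose any rank-$r$ idempotent $e''$ whose image contains $\im m'$; then $e''m' = m'$, so $e_{n,r}[m'] = (e_{n,r}[e''])[m']$, and it is enough to prove $e_{n,r}[e''] = [e'']$ for every rank-$r$ idempotent $e''$. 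All such idempotents are $\fG_n$-conjugate, and $e_{n,r}$ is manifestly invariant under $\fG_n$-conjugation (its coefficients depend only on $\srk m$ and $\rank m$), so it suffices to do this for one $e''$: fix an $r$-dimensional subspace $W_0$ with complement $U_0$ and let $\pi$ be the projection onto $W_0$ along $U_0$. (The case $r=n$ is trivial and handled separately: there $\pi = \id$ and one checks $e_{n,n} = [\id]$ by inspection.)

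Next, expand $e_{n,r}[\pi] = q^{-(n-1)r}\mu(r)\sum_m \mu(\srk m)\qbinom{n-1-\rank m}{r-\rank m}[m\pi]$ and regroup the terms by the value of $\phi := m\pi$, which depends only on $m|_{W_0}$ and which annihilates $U_0$. This rewrites $e_{n,r}[\pi]$ as $q^{-(n-1)r}\mu(r)\sum_\phi c(\psi)[\phi]$, where $\psi := \phi|_{W_0}$ and $c(\psi)$ is the sum of $\mu(\srk m)\qbinom{n-1-\rank m}{r-\rank m}$ over semi-idempotents $m$ with $m|_{W_0} = \psi$. So the theorem is equivalent to the scalar identity: $c(\psi) = q^{(n-1)r}\mu(r)^{-1}$ when $\psi$ is the inclusion $\iota\colon W_0 \hookrightarrow \bF^n$, and $c(\psi) = 0$ for every other $\psi$. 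This is a purely combinatorial statement about the integer $q$ and is independent of $k$.

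The case $\psi = \iota$ is easy: if a semi-idempotent $m$ restricts to the identity on $W_0$, then $W_0$ lies in its $1$-eigenspace, so $\rank m \ge r$; the $q$-binomial then forces $\rank m = r$, whence $m$ is a rank-$r$ idempotent with image $W_0$. There are $q^{r(n-r)}$ of these, each contributing $\mu(r)$, so $c(\iota) = q^{r(n-r)}\mu(r)$, which equals $q^{(n-1)r}\mu(r)^{-1}$ because $\mu(r)^2 = q^{r(r-1)}$.

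The heart of the proof — and the step I expect to be the main obstacle — is the vanishing $c(\psi) = 0$ for $\psi \ne \iota$. Here I would use that a semi-idempotent $m$ is the same as the data of its $1$-eigenspace $W$, its generalized $0$-eigenspace $U$ (a complement of $W$; one checks $U = \im(m-\id)$), and a nilpotent endomorphism $N$ of $U$, with $\srk m = \dim W$ and $\rank m = \dim W + \rank N$. The constraint $m|_{W_0} = \psi$ then automatically forces $\im(\psi-\iota) \subseteq U$, forces $W \cap W_0 \subseteq \ker(\psi-\iota)$, pins $N$ down on $\pi_U(W_0)$, and leaves $N$ otherwise free subject only to nilpotency. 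Summing out the free part of $N$ against the $q$-binomial weight should collapse by a clean identity — I expect the basic case $\sum_{N \in \cN(\bF^j)}\qbinom{j-1-\rank N}{s-\rank N} = q^{js}\qbinom{j-1}{s}$ together with a relative version over a prescribed subspace — reducing $c(\psi)$ to a signed sum over admissible eigenspaces $W$ with sign $(-1)^{\dim W}$, which one then shows vanishes, either by a sign-reversing involution on admissible $W$ or by another application of the $q$-binomial theorem. Managing these nested sums, and in particular handling nilpotency of an endomorphism that is partly prescribed and partly free, is the genuinely delicate point; the rest is formal. (A less self-contained alternative would instead pin down the central idempotent cutting out $\fa_{n,r}$ by computing its trace on each simple $k[\fM_n]$-module via an explicit character formula.)
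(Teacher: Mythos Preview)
Your reduction --- invariance under $\fG_n$-conjugation plus acting as the identity on one rank-$r$ idempotent --- is exactly the paper's Kuhn criterion, and your treatment of $c(\iota)$ matches the paper's closing step. The divergence is entirely in how the vanishing $c(\psi)=0$ for $\psi\neq\iota$ is attacked.

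The paper does not use the spectral decomposition $(W,U,N)$ at all. Instead, after rewriting the candidate as $\sum_{j=0}^r q^{-(n-j)j}\mu(j)^{-1}\sum_m\mu(\srk m)\qbinom{n-\rank m}{j-\rank m}[m]$, it introduces for each $0\le\ell\le n-r$ the partial sum
\[
h_\ell(j,\psi)=q^{\ell j}\!\!\sum_{\substack{S\colon\bF^{n-\ell}\to\bF^{n-\ell}\\ S\text{ semi-idempotent},\ S|_{W_0}=\psi}}\!\!\mu(\srk S)\,\qbinom{n-\ell-\rank S}{j-\rank S}
\]
and proves, by peeling off one coordinate at a time, that $h_\ell$ is independent of $\ell$. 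Taking $\ell=n-r$ collapses everything to a sum inside $\bF^r$, which a short $q$-binomial identity then kills. The engine of the induction is a hyperplane count: for a semi-idempotent partial map $T$ on a codimension-one subspace, the semi-idempotent extensions of $T$ to the full space with stable rank $\srk T$ outnumber those with stable rank $\srk T+1$ by a factor of exactly $q^{\srk T}$, which matches $\mu(d+1)=-q^d\mu(d)$ and produces the cancellation. This is proved by an explicit Jordan-form-plus-companion-matrix calculation.

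Your direct plan is plausible in outline, but the obstacle you flag is more serious than the sketch lets on. The subspace $\pi_U(W_0)$ on which $N$ is prescribed, and the values prescribed there, both vary with the pair $(W,U)$; and the nilpotency of $N$ couples the free part to the prescribed part in a way that depends on the Jordan shape of the prescribed part, not merely on its dimension or rank --- so the inner sum over $N$ does not reduce to your identity $\sum_{N\in\cN(\bF^j)}\qbinom{j-1-\rank N}{s-\rank N}=q^{js}\qbinom{j-1}{s}$, and no ``relative version over a prescribed subspace'' with that level of uniformity is available. The paper's column-at-a-time induction is precisely the device that sidesteps this: one never has to count nilpotents prescribed on an arbitrary subspace, only semi-idempotent extensions across a single new coordinate of a matrix already in normal form. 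Your hoped-for sign-reversing involution on the $1$-eigenspaces $W$ is the right shape of the eventual cancellation, but making it precise amounts to reproving exactly that hyperplane counting lemma.
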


Kov\'acs' original proof shows that the unit is a linear combination of semi-idempotent matrices. Prior to our work, it seems that very little was known about the coefficients in this expression. Kuhn \cite[\S 3.1]{Kuhn2} made some observations, which we make use of, and determined the unit explicitly for $n=2$ and $q=2$. Steinberg \cite{Steinberg} gave an exposition of Kov\'acs' proof and included the case Kuhn analyzed as an exercise. As far as we know, this is the only case where an explicit formula for the unit has appeared in the literature.

\begin{remark}
The quantity $\mu(r)$ appearing in the above formula is closely related to the M\"obius function on the lattice of subspaces of $\bF^n$. This interpretation does not figure into our proof, but it seems likely to us that it is somehow relevant. It would be interesting to figure out how.
\end{remark}

\subsection{Module theory}

In addition to the unit formula, we prove some results about the module theory of $k[\fM_n]$. For this, we suppose that $k$ has characteristic~0, so that $k[\fM_n]$ is semi-simple. It follows from Kov\'acs' theorem that simple $k[\fM_n]$-modules correspond to simple $k[\fG_r]$-modules for $0 \le r \le n$. For a simple $k[\fG_r]$-module $\pi$, we let $L_n(\pi)$ denote the corresponding simple $k[\fM_n]$-module.

We give an explicit description of $L_n(\pi)$ (\S \ref{ss:simple}), and describe how these modules behave under certain induction and restriction operations (\S \ref{ss:res-group}). We also show how one can determine the multiplicity of $L_n(\pi)$ in an arbitrary $k[\fM_n]$-module using knowledge of characters of $\fG_n$ (Theorem~\ref{thm:multiplicity}). This relies on a version of the Pieri rule for $\fG_n$ due to Gurevich and Howe \cite{GH1}. Finally, we prove a version of Schur--Weyl duality in this context. Let $V=k[\bF^n]$ be the basic permutation representation of $\fM_n$. We determine the full centralizer of $k[\fM_n]$ in $\End(V^{\otimes m})$ (Theorem~\ref{thm:doublecentralizer}), and determine the simple decomposition of $V^{\otimes m}$ (Proposition~\ref{prop:SWdecomposition}).

\subsection{Motivation} \label{ss:motivation}

In forthcoming work \cite{HSZ}, we study some interesting semi-simple pre-Tannakian tensor categories called \defn{Jacobi categories}. The endomorphism algebras of basic objects in these categories are (essentially) the algebras $k[\fM_n]$. Thus one needs to understand these algebras well in order to effectively analyze the Jacobi categories. In particular, we use results from this paper to determine the categorical dimension of simple objects in Jacobi categories. This was our primary source of motivation.

Another source of motivation is simply that the algebras $k[\fM_n]$ are quite natural, and therefore intrinsically interesting. This work can be seen as a contribution to the extensive literature on them, and related algebras. We mention a few specific pieces of work; this discussion is by no means complete.
\begin{itemize}
\item Obviously, our work is closely related to the work of Faddeev \cite{Faddeev}, Kov\'acs \cite{Kovacs}, and Okni\'nski--Putcha \cite{OP}. Kuhn \cite{Kuhn2} also built on Kov\'acs work and proved a useful result about the units of rank ideals, which we use (Lemma~\ref{lem:kuhn-criterion-for-unit}). Steinberg \cite{Steinberg2} has also recently given a new proof of Kov\'acs' theorem.
\item The \defn{rook monoid} is closely related to the monoid $\fM_n$; in fact, the rook monoid is to $\fM_n$ as the symmetric group $\fS_n$ is to the general linear group $\fG_n$. The representation theory of the rook monoid was studied by Munn \cite{Munn1, Munn2} and Solomon \cite{Solomon3}. Some of our results mirror results from those papers.
\item Solomon \cite{Solomon1} also introduced a \defn{$q$-rook algebra}; this is analogous to the Iwahori--Hecke algebra deforming $k[\fS_n]$. It has been studied in a number of papers, such as \cite{Halverson, HR, JWL, Solomon4}.
\item There is a well-developed theory of reductive monoids, generalizing the theory of reductive groups. The simplest example is $\fM_n$. See \cite{Solomon2} for a survey.
\end{itemize}
A textbook treatment of some of these topics is given in Steinberg's book \cite{Steinberg}.

An outline of the remainder of the paper is as follows. Section \ref{s:preliminaries} reviews some background material on $k[\fM_n]$ and the representation theory of finite general linear groups $\fG_n = \GL_n(\bF)$.  Section \ref{s:semi} discusses semi-idempotent matrices, and proves some algebraic and enumerative results about them.  Section \ref{s:unit} proves the main theorem, giving the formula for the unit element $\mathfrak{a}_{n,r}$. Section \ref{s:module-theory} describes the representation theory of $k[\fM_n]$ in detail. Finally, Section \ref{s:schur-weyl} establishes a version Schur-Weyl duality for $k[\fM_n]$. 

\subsection{Notation}

We list the most important notation.
\begin{description}[align=right,labelwidth=2.5cm,leftmargin=!]
\item[ $k$ ] The coefficient field
\item[ $\bF$ ] A finite field of cardinality $q$
\item[ $\fM_n$ ] The monoid of $n \times n$ matrices over $\bF$ under multiplication
\item[ $\fa_{n,r}$ ] The ideal of $k[\fM_n]$ of rank $\le r$ matrices
\item[ $e_{n,r}$ ] The standard rank $r$ idempotent matrix in $\fM_n$
\item[ $\fG_n$ ] The group $\GL_n(\bF)$ of invertible $n \times n$ matrices
\item[ $\mu(n)$ ] The quantity $(-1)^n q^{\binom{n}{2}}$
\item[ $\srk(m)$ ] The stable rank of the matrix $m$, i.e., the rank of $m^n$ for $n \gg 0$.
\end{description}

\section{Preliminaries}\label{s:preliminaries}

\subsection{Groupoid algebras}

To describe the structure of $k[\fM_n]$, it will be convenient to use the language of groupoids. Recall that a \defn{groupoid} is a category in which all morphisms are isomorphisms. We say that a groupoid is \defn{connected} if any two objects are isomorphic, and \defn{finite} if there are finitely many objects and morphisms.

Let $\cG$ be a finite groupoid. For two objects $x$ and $y$, we write $\Isom(x,y)$ for the finite set of (iso)morphisms from $x$ to $y$. We define the \defn{groupoid algebra} $k[\cG]$ as follows. As a vector space, we have
\begin{displaymath}
k[\cG] = \bigoplus_{x,y \in \Ob(\cG)} k[\Isom(x,y)].
\end{displaymath}
For an isomorphism $a \colon x \to y$, we write $[a]$ for the corresponding basis vector of $k[\cG]$. We define the multiplication law on $k[\cG]$ by declaring $[b][a]$ to be $[b \circ a]$ if the composition $b \circ a$ is defined, and~0 otherwise. One readily verifies that $k[\cG]$ is an associative and unital $k$-algebra. The unit element is given by $\sum_{x \in \Ob(\cG)} [\id_x]$.

We require the following simple result that expresses a groupoid algebra in terms of an ordinary group algebra. This result is well-known (see, e.g., \cite[Theorem~8.15]{Steinberg}), but we include a proof for the sake of completeness.

\begin{proposition} \label{prop:groupoid}
Suppose $\cG$ is connected and has $n$ objects, and let $G$ be the automorphism group of some object. Then $k[\cG]$ is isomorphic to $\rM_n(k[G])$.
\end{proposition}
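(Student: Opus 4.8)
The plan is to make the matrix-unit structure of $k[\cG]$ explicit by choosing, for each object, a fixed path back to a base object. Enumerate the objects as $x_1, \dots, x_n$, set $G = \Aut(x_1)$, and, using connectedness, choose for each $i$ an isomorphism $g_i \colon x_1 \to x_i$ with $g_1 = \id_{x_1}$. The key observation is that every morphism $a \colon x_i \to x_j$ then factors uniquely as $a = g_j \circ \sigma \circ g_i^{-1}$ with $\sigma = g_j^{-1} \circ a \circ g_i \in G$; equivalently, $a \mapsto (i, j, g_j^{-1} \circ a \circ g_i)$ is a bijection from $\coprod_{i,j} \Isom(x_i, x_j)$ onto $\{1, \dots, n\}^2 \times G$.

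Next I would define the candidate map $\Phi \colon k[\cG] \to \rM_n(k[G])$ on basis vectors by $\Phi([a]) = E_{ji} \otimes [g_j^{-1} \circ a \circ g_i]$ for $a \colon x_i \to x_j$, where $E_{ji}$ is the elementary matrix with a single $1$ in position $(j,i)$ and we identify $\rM_n(k[G]) = \rM_n(k) \otimes_k k[G]$. By the bijection of the previous paragraph, $\Phi$ carries the basis $\{[a]\}$ of $k[\cG]$ bijectively onto the basis $\{E_{ji} \otimes [\sigma]\}$ of $\rM_n(k[G])$, so $\Phi$ is automatically a $k$-linear isomorphism; it remains only to check it is a unital ring homomorphism.

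For multiplicativity, take $a \colon x_i \to x_j$ and $b \colon x_{j'} \to x_l$. If $j' \neq j$, then $[b][a] = 0$ in $k[\cG]$, while $\Phi([b]) \Phi([a])$ has matrix part $E_{lj'} E_{ji} = 0$, so both sides vanish. If $j' = j$, then $[b][a] = [b \circ a]$ and
\[
\Phi([b]) \Phi([a]) = \bigl( E_{lj} \otimes [g_l^{-1} \circ b \circ g_j] \bigr) \bigl( E_{ji} \otimes [g_j^{-1} \circ a \circ g_i] \bigr) = E_{li} \otimes [g_l^{-1} \circ b \circ a \circ g_i] = \Phi([b \circ a]),
\]
using $g_j \circ g_j^{-1} = \id_{x_j}$. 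Finally $\Phi$ sends the unit $\sum_{i} [\id_{x_i}]$ of $k[\cG]$ to $\sum_i E_{ii} \otimes [\id_{x_1}]$, which is the identity of $\rM_n(k[G])$. Hence $\Phi$ is an algebra isomorphism.

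I do not expect a genuine obstacle: the content is entirely bookkeeping, and the only care needed is in fixing conventions (left versus right composition, and the order of indices in $E_{ji}$ relative to $\Isom(x_i, x_j)$) so that composition in $\cG$ matches matrix multiplication in $\rM_n(k[G])$. One may also note that a different choice of base object or of the $g_i$ changes $\Phi$ by conjugation by a monomial matrix together with an inner automorphism of $G$, so the isomorphism class is independent of the choices; this is not needed for the statement itself.
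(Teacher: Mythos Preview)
Your proof is correct and follows essentially the same approach as the paper: both enumerate the objects, fix a base object with automorphism group $G$, choose isomorphisms from the base object to each other object, and define the isomorphism $[a] \mapsto [g_j^{-1} a g_i] E_{ji}$ for $a \in \Isom(x_i,x_j)$. Your version is in fact more detailed, since the paper simply asserts ``one easily verifies that this is an algebra isomorphism'' while you spell out the multiplicativity and unit checks.
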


\begin{proof}
Let $x_1, \ldots, x_n$ be the objects, and suppose $G$ is the automorphism group of $x_1$. For each $1 \le i \le n$, choose an isomorphism $f_i \colon x_1 \to x_i$, with $f_1$ the identity. Let $e_{i,j} \in M_n(k)$ be the usual elementary matrix. We define a map
\begin{displaymath}
\phi \colon k[\cG] \to \rM_n(k[G])
\end{displaymath}
as follows: for $a \in \Isom(x_i, x_j)$, we put $\phi([a])=[f_j^{-1}af_i] e_{j,i}$. Here $[f_j^{-1}af_i]$ is a basis element of $k[G]$. One easily verifies that this is an algebra isomorphism.
\end{proof}

\subsection{Kov\'acs theorem} \label{ss:kovacs}

As we saw in \S \ref{s:intro}(a), Kov\'acs proved $k[\fM_n]$ is a product of matrix algebras over certain group rings. We now reformulate this result in a more canonical manner.

Let $\cG_{n,r}$ be the groupoid whose objects are the $r$-dimensional subspaces of $\bF^n$, and whose morphisms are isomorphisms of vector spaces. Define a linear map
\begin{displaymath}
\psi_r \colon k[\fM_n] \to k[\cG_{n,r}], \qquad
\psi_r([m]) = \sum_U [m\restriction_U]
\end{displaymath}
where the sum is over $r$-dimensional subspaces $U$ of $\bF^n$ such that $m(U)$ also has dimension $r$. In this formula, $m \restriction_U$ is regarded as an element of $\Isom(U, m(U))$. Note that the map $\psi_r$ is canonical, in the sense that it does not rely on any choices.

\begin{proposition} \label{prop:structure}
We have the following:
\begin{enumerate}
\item $\psi_r$ is an algebra homomorphism.
\item $\psi_r$ induces an algebra isomorphism $\fa_{n,r}/\fa_{n,r-1} \to k[\cG_{n.r}]$.
\item The $\psi_r$'s induce an algebra isomorphism
\begin{displaymath}
\psi \colon k[\fM_n] \to \prod_{r=0}^n k[\cG_{n,r}].
\end{displaymath}
\end{enumerate}
\end{proposition}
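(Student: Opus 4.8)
The plan is to prove (a), then (b), and deduce (c) formally, with (b) carrying essentially all the content.

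\emph{Part (a).} This is a direct computation. Expanding, $\psi_r([m_1])\psi_r([m_2])=\sum_{U_1,U_2}[m_1\restriction_{U_1}][m_2\restriction_{U_2}]$, where $U_i$ runs over the $r$-dimensional subspaces with $\dim m_i(U_i)=r$. In $k[\cG_{n,r}]$ the product $[m_1\restriction_{U_1}][m_2\restriction_{U_2}]$ is $0$ unless $m_2(U_2)=U_1$, in which case it equals $[(m_1m_2)\restriction_{U_2}]$. So only the terms with $U_1=m_2(U_2)$ survive, and the surviving $U_2$ are precisely those with $\dim m_2(U_2)=r$ and $\dim m_1(m_2(U_2))=r$. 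Since $\dim m_1(m_2(U))\le\dim m_2(U)\le\dim U=r$, those two conditions together are equivalent to the single condition $\dim(m_1m_2)(U)=r$. Hence $\psi_r([m_1])\psi_r([m_2])=\psi_r([m_1m_2])=\psi_r([m_1][m_2])$, and since $\psi_r([\id_{\bF^n}])=\sum_{\dim U=r}[\id_U]$ is the unit of $k[\cG_{n,r}]$, $\psi_r$ is a homomorphism of unital algebras.

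\emph{Part (b).} If $\rank m<r$, no $r$-dimensional $U$ satisfies $\dim m(U)=r$, so $\psi_r([m])=0$; thus $\psi_r$ annihilates $\fa_{n,r-1}$ and induces an algebra map $\bar\psi_r\colon\fa_{n,r}/\fa_{n,r-1}\to k[\cG_{n,r}]$. Both sides have dimension $N(r)^2\cdot\#\fG_r$: the target by Proposition~\ref{prop:groupoid}, and the source because its basis is indexed by the rank-$r$ matrices, of which there are $N(n-r)\cdot N(r)\cdot\#\fG_r=N(r)^2\cdot\#\fG_r$ (choose a kernel, an image, and an isomorphism between the two resulting $r$-dimensional spaces). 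So it is enough to prove that $\bar\psi_r$ is surjective, equivalently injective. For surjectivity I would use that $\fa_{n,r}$ is the two-sided ideal of $k[\fM_n]$ generated by $[e_{n,r}]$ — indeed every rank-$s$ matrix with $s\le r$ has the form $g_1 e_{n,s}g_2=g_1(e_{n,s}e_{n,r})g_2$ with $g_i\in\fG_n$ — so $\psi_r(\fa_{n,r})$ is the two-sided ideal of $\psi_r(k[\fM_n])$ generated by the idempotent $\psi_r([e_{n,r}])$. In the matrix description of Proposition~\ref{prop:groupoid}, with $\im e_{n,r}$ as base object, $\psi_r([e_{n,r}])$ becomes the idempotent $\sum_U e_{\im e_{n,r},\,U}$ summed over $U$ complementary to $\ker e_{n,r}$, and each $\psi_r([g])$ ($g\in\fG_n$) becomes an invertible generalized permutation matrix on the index set of $r$-dimensional subspaces. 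Multiplying $\psi_r([e_{n,r}])$ on both sides by such elements produces, inside $\psi_r(\fa_{n,r})$, elements concentrated in any prescribed row with invertible entries exactly in the columns complementary to any prescribed $(n-r)$-dimensional subspace; from these, using the ideal property, one extracts all matrix units and hence all of $k[\cG_{n,r}]$. The one substantive point — and the main obstacle — is the combinatorial claim that these complementary-subspace patterns span everything over $k$, which relies on $q$ being invertible in $k$. (Alternatively one can argue injectivity directly: represent a class by $x=\sum_{\rank m=r}c_m[m]$, observe that the coefficient of $[a\colon U\to W]$ in $\psi_r(x)$ is $\sum_K c_{m(a,K)}$ over $(n-r)$-dimensional $K$ with $K\oplus U=\bF^n$, where $m(a,K)$ is the rank-$r$ matrix agreeing with $a$ on $U$ and vanishing on $K$; grouping these equations by $W$ and inverting the resulting incidence system over $k$ forces all $c_m=0$.) Either way, once $\bar\psi_r$ is bijective, (b) follows, and moreover $\psi_r$ maps $\fa_{n,r}$ onto $k[\cG_{n,r}]$ with kernel exactly $\fa_{n,r-1}$.

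\emph{Part (c).} By (a), $\psi=(\psi_0,\dots,\psi_n)$ is a homomorphism of unital algebras (it sends $[\id_{\bF^n}]$ to the unit). It is injective: if $\psi(x)=0$ with $x\in\fa_{n,r}$, then $\psi_r(x)=0$, so $x\in\fa_{n,r-1}$ by (b), and descending the filtration $\fa_{n,-1}\subseteq\fa_{n,0}\subseteq\cdots\subseteq\fa_{n,n}=k[\fM_n]$ gives $x=0$. It is surjective by dimension count: $\dim k[\fM_n]=\#\fM_n=\sum_{r=0}^n\#\{m:\rank m=r\}=\sum_{r=0}^n N(r)^2\#\fG_r=\sum_{r=0}^n\dim k[\cG_{n,r}]$. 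Hence $\psi$ is an isomorphism.
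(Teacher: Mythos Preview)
Your arguments for (a) and (c) are sound; in fact your (c), deducing injectivity from (b) by descent through the rank filtration and then counting dimensions, is arguably tidier than the paper's version, which builds explicit preimages using the Kov\'acs units $\epsilon_r$.

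The real gap is in (b), and you have flagged it yourself: the ``combinatorial claim that these complementary-subspace patterns span everything over $k$'' (equivalently, that your incidence system is invertible) carries the entire weight of the proposition, and you do not prove it. Rephrased cleanly, your injectivity matrix has rows indexed by injections $i\colon W\hookrightarrow\bF^n$, columns indexed by surjections $m\colon\bF^n\twoheadrightarrow W$, and $(i,m)$-entry $1$ exactly when $m\circ i=\id_W$; showing this is invertible over $k$ is essentially equivalent in strength to Kov\'acs' theorem that each $\fa_{n,r}$ has a unit, and Kov\'acs' own proof of that is nontrivial (he organizes an equivalent linear system by rank sequences of semi-idempotents and shows it is triangular with powers of $q$ on the diagonal). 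The paper does not attempt this linear algebra directly: it \emph{cites} Kov\'acs' theorem to know that $R=\fa_{n,r}/\fa_{n,r-1}$ is unital, observes $R=ReR$ for $e=[e_{n,r}]$, applies the general isomorphism $R\cong\End_{eRe}(Re)$, identifies $eRe\cong k[\fG_r]$ and $Re\cong k[X]$ for $X$ the set of injections $\bF^r\to\bF^n$, and checks that $\psi_r$ matches up with this. If you want your argument to stand on its own, you must either reproduce Kov\'acs' reasoning or establish the invertibility of your incidence matrix by some other route --- neither is a one-liner.
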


\begin{proof}
(a) One can prove this directly, but we find the following argument to be more clear. Let $m \in \fM_n$ and let $f \colon U \to V$ be an isomorphism in $\cG_{n,r}$. Let $mf \colon U \to m(V)$ be the composite map. If $m(V)$ has dimension $r$ then we define $[m] \cdot [f]=[mf]$; otherwise, we define $[m] \cdot [f]=0$. One readily verifies that this gives $k[\cG_{n,r}]$ the structure of a left $k[\fM_n]$-module. Now, for $a \in k[\fM_n]$, left multiplication by $a$ is an endomorphism of $k[\cG_{n,r}]$ as a right module over itself, and thus given by left multiplication by some element $\phi(a) \in k[\cG_{n,r}]$. Clearly, $\phi \colon k[\fM_n] \to k[\cG_{n,r}]$ is an algebra homomorphism. We have $\phi(a) = a \cdot 1$, where $1$ is the identity of $k[\cG_{n,r}]$. Since $1=\sum_U [\id_U]$, we see that $\phi([m]) = m \cdot 1 = \psi_r([m])$, which shows that $\psi_r$ is an algebra homomorphism.

(b) This argument is a slight modification of the one from \cite[\S 3]{Kovacs}. Let $R=\fa_{n,r}/\fa_{n,r-1}$, which is a unital ring (by Kov\'acs' theorem). Let
\begin{displaymath}
e_{n,r} = \left(\begin{matrix} I_r \\ & 0_{n-r} \end{matrix}\right),
\end{displaymath}
regarded as an element of $\fM_n$, and let $e=[e_{n,r}]$, which is an idempotent of $R$. One easily sees that $R=ReR$, and so, by general ring theory \cite[Theorem~4.13]{Steinberg}, $R$ is isomorphic to the endomorphism ring of $Re$, regarded as a right $eRe$-module; explicitly, this isomorphism is
\begin{displaymath}
i \colon R \to \End_{eRe}(Re), \qquad i(a)(x) = ax.
\end{displaymath}
Now, $eRe$ is naturally identified with $k[\fG_r]$, and $Re$ is identified with $k[X]$, where $X$ is the set of injective linear maps $g \colon \bF^r \to \bF^n$. The action of $R$ on $k[X]$ is similar to what was used in the previous proof: $[m] \cdot [g]$ is defined to be $[mg]$ if $mg$ is injective, and~0 otherwise. We similarly define a left $k[\cG_{n,r}]$-module structure on $k[X]$, by declaring $[f] \cdot [g]$ to be $[fg]$ if $fg$ is defined and injective, and~0 otherwise. Now, consider the diagram
\begin{displaymath}
\xymatrix@C=5em{
R \ar[r]^{\psi_r} \ar[rd]_i & k[\cG_{n,r}] \ar[d] \\
& \End_{k[\fG_r]}(k[X]) }
\end{displaymath}
where the vertical map comes from the module structure and is defined similarly to $i$. One readily verifies that this diagram commutes, that is, for $m \in \fM_n$ and $g \in X$, we have $[m] \cdot [g] = \psi_r([m]) \cdot [g]$. Since $i$ is an isomorphism, it follows that $\psi_r$ is injective. But all spaces above have the same dimension, and so $\psi_r$ is an isomorphism, as claimed.

(c) Let $\epsilon_r$ be the unit of $\fa_{n,r}$, and put $\epsilon_{-1}=0$. Then $\psi_r(\epsilon_s)$ is~1 if $s \ge r$, and~0 otherwise. We thus see that $\psi_s(\epsilon_r-\epsilon_{r-1})=\delta_{r,s}$. Now, let $x=(x_0, \ldots, x_n)$ in the target be given. Since $\psi_r$ is surjective, there is $y_r \in k[\fM_n]$ such that $x_r=\psi_r(y_r)$. Putting $y=\sum_{r=0}^n (\epsilon_r-\epsilon_{r-1}) y_r$, we have $\psi_r(y)=x_r$ for each $0 \le r \le n$, and so $\psi(y)=x$. Thus $\psi$ is surjective. Since the source and target have the same dimension, the result follows.
\end{proof}

\subsection{Representation theory of $\fG_n$} \label{ss:repGLn}

We recall some of the representation theory of $\fG_n$. Throughout, $k$ denotes an algebraically closed field of characteristic~0, and all representations are over $k$.

Let $\fP_{n,m} \subset \fG_{n+m}$ be the standard parabolic subgroup with Levi factor $\fG_n \times \fG_m$. If $\pi$ and $\sigma$ are representations of $\fG_n$ and $\fG_m$, put
\begin{displaymath}
\pi \odot \sigma = \Ind_{\fP_{n,m}}^{\fG_{n+m}}(\pi \boxtimes \sigma),
\end{displaymath}
where $\fP_{n,m}$ acts on $\pi \boxtimes \sigma$ through its Levi. This is called the \defn{parabolic induction} of $V$ and $W$. This operation defines a monoidal structure on the category
\begin{displaymath}
\bigoplus_{n \ge 0} \Rep(\fG_n).
\end{displaymath}
Moreover, this monoidal structure is braided \cite{JoyalStreet}. As a consequence, we see that (up to isomorphism), $\odot$ is commutative and associative.

A representation $\sigma$ of $\fG_n$ is called \defn{cuspidal} if it does not occur as a constituent in $\pi \odot \pi'$, where $\pi$ and $\pi'$ are representations of $\fG_r$ and $\fG_s$ with $r+s=n$ and $r,s>0$. Let $\sigma$ be a cuspidal representation of $\fG_m$. Given a partition $\lambda$ of $n$, there is an irreducible representation $\sigma[\lambda]$ of $\fG_{nm}$, which occurs in $\sigma^{\odot n}$. See \cite[Appendix B.2.2]{GH2}.

Let $\tau$ be the trivial representation of $\fG_1$. The representations $\tau[\lambda]$, as $\lambda$ varies, are called the \defn{unipotent} representations of $\fG_n$. If $\lambda=(n)$ is a single row partition then $\tau[\lambda]$ is the trivial representation of $\fG_n$; we write $\tau(n)$ for this representation. If $\lambda=(1^n)$ is a single column then $\tau[\lambda]$ is the Steinberg representation of $\fG_n$. (This convention agrees with Gurevich--Howe \cite{GH1, GH2} and Green \cite{Green}, but other references use the transposed convention.)

Let $\pi$ be an irreducible representation of $\fG_n$. There is then an isomorphism
\begin{displaymath}
\pi = \sigma_1[\lambda_1] \odot \cdots \odot \sigma_r[\lambda_r],
\end{displaymath}
where the $\sigma_i$ are non-isomorphic cuspidal representations and the $\lambda_i$ are non-empty partitions. This decomposition is unique, up to permuting the factors, see \cite[Appendix B]{GH2} and the references within or \cite[Corollary 5.14]{Carmon}.  The multiset consisting of $\vert \lambda_1 \vert$ copies of $\sigma_1$ , $\ldots$, $\vert \lambda_r \vert$ copies of $\sigma_r$ is called the \defn{cuspidal support} of $\pi$. We say that a representation $\pi$ is \defn{pure} if $\tau$ is not in its cuspidal support; this is not standard terminology. We define $\pi\{\lambda\}=\pi \odot \tau[\lambda]$. Thus every irreducible representation of $\fG_n$ has the form $\pi\{\lambda\}$ for a unique pure representation $\pi$ and (possibly empty) partition $\lambda$.

Let $\lambda$ and $\mu$ be partitions. We write $\mu \subset \lambda$ to indicate that the Young diagram for $\mu$ is contained in that for $\lambda$. We say that $\lambda/\mu$ is a horizontal strip if $\mu\subset \lambda$, and no two boxes in $\lambda \setminus \mu$ belong to the same column; we write $\lambda/\mu \in \HS$ to indicate this. We write $\lambda/\mu \in \HS_n$ if additional $\lambda$ has exactly $n$ more boxes than $\mu$.

Recall the classical Pieri rule. Let $S^{\lambda}$ denote the Specht module for the symmetric group $\fS_n$, where $\vert \lambda \vert =n$. Then
\begin{displaymath}
\Ind_{\fS_n \times \fS_m}^{\fS_{n+m}}(S^{\lambda} \boxtimes \bone) = \bigoplus_{\mu/\lambda \in \HS_m} S^{\mu}.
\end{displaymath}
We will require an analogous rule in the representation theory of $\fG_n$:

\begin{proposition}[{\cite[Equation (5.21)]{GH2}}]
Let $\pi$ be a pure representation and let $\lambda$ be a partition. Then
\begin{displaymath}
\pi\{\lambda\} \odot \tau(n) = \bigoplus_{\mu/\lambda \in \HS_n} \pi\{\mu\}.
\end{displaymath}
\end{proposition}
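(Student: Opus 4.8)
The plan is to peel $\pi$ off and reduce to a statement about unipotent representations, and then to recognize that statement as the classical Pieri rule, transported through the standard isomorphism between the unipotent part of $\bigoplus_{N} R(\fG_N)$ and the ring of symmetric functions. For the reduction, recall that $\pi\{\lambda\} = \pi\odot\tau[\lambda]$, $\pi\{\mu\} = \pi\odot\tau[\mu]$, and $\tau(n) = \tau[(n)]$ by definition. Since $\odot$ is associative up to isomorphism (it is part of a braided monoidal structure) and parabolic induction is exact, the functor $\pi\odot(-)$ carries $\tau[\lambda]\odot\tau(n)$ to $\pi\{\lambda\}\odot\tau(n)$ up to isomorphism, and carries finite direct sums to finite direct sums. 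It therefore suffices to prove the identity
\begin{equation*}
\tau[\lambda]\odot\tau(n) = \bigoplus_{\mu/\lambda\in\HS_n}\tau[\mu],
\tag{$\star$}
\end{equation*}
and then apply $\pi\odot(-)$ to it. This step uses nothing about cuspidal support, irreducibility, or the hypothesis that $\pi$ is pure; purity serves only to make the right-hand side of the proposition unambiguous notation.

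To prove $(\star)$, set $N = \vert\lambda\vert + n$. The unipotent representations of $\fG_N$ are exactly the irreducible constituents of $\Ind_{\fB}^{\fG_N}(\bone)$ for a Borel subgroup $\fB$, and by Iwahori's theorem $\End_{\fG_N}(\Ind_{\fB}^{\fG_N}(\bone))$ is the Iwahori--Hecke algebra $\mathcal{H}_q(\fS_N)$. Since $q$ is a prime power with $q\ge 2$, it is not a root of unity, so over $k$ the algebra $\mathcal{H}_q(\fS_N)$ is semisimple, its simple modules $S^{\mu}_q$ are indexed by the partitions $\mu$ of $N$ and deform the Specht modules, and its induction product has the same structure constants as that of the symmetric groups. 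The extra input one must record is that parabolic induction of unipotent representations matches the Hecke induction product under $\tau[\mu]\leftrightarrow S^{\mu}_q$; equivalently (Green \cite{Green}, or Gurevich--Howe \cite[Appendix B]{GH2}), the subring of $\bigoplus_{N} R(\fG_N)$ spanned by unipotent characters is isomorphic to the ring $\Lambda$ of symmetric functions in such a way that $\tau[\mu]\mapsto s_{\mu}$ and $\odot$ goes to multiplication. Granting this, and using $\tau(n) = \tau[(n)]\mapsto s_{(n)} = h_n$, the identity $(\star)$ becomes the classical Pieri rule $s_{\lambda}\,h_n = \sum_{\mu/\lambda\in\HS_n} s_{\mu}$ recalled in \S\ref{ss:repGLn}. (Since the statement is also \cite[Equation~(5.21)]{GH2}, one could instead simply cite it.)

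The reduction is routine, so the real content sits in $(\star)$, and it rests entirely on importing Green's identification of the unipotent subring with $\Lambda$ --- equivalently, on the semisimplicity of $\mathcal{H}_q(\fS_N)$ together with the compatibility of parabolic induction and Hecke induction. The one delicate point is the normalization of the bijection between unipotent representations and partitions used to define $\tau[\mu]$ in \S\ref{ss:repGLn}; fortunately it does not affect $(\star)$, since under $\tau[\mu]\leftrightarrow s_{\mu}$ one gets $s_{\lambda}h_n$ and hence horizontal strips on $\lambda$, while under the transposed convention $\tau[\mu]\leftrightarrow s_{\mu'}$ one gets $s_{\lambda'}e_n$ and hence vertical strips on $\lambda'$, which are exactly the horizontal strips on $\lambda$. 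So only the internal consistency of the convention adopted here (trivial $= \tau[(n)]$, Steinberg $= \tau[(1^n)]$) need be matched against whichever source one cites.
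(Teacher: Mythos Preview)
Your argument is correct. The reduction via associativity of $\odot$ to the unipotent identity $(\star)$ is valid, and $(\star)$ is exactly the classical Pieri rule transported through Green's isomorphism of the unipotent Hall subring with $\Lambda$; since the summands $\tau[\mu]$ are irreducible and everything is semisimple, the identity in the Grothendieck group lifts to an isomorphism of representations. Your remark about the transpose convention is well taken and correctly resolved.

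As for comparison: the paper does not give its own proof of this proposition. It is stated with attribution to \cite[Equation~(5.21)]{GH2} and simply quoted as input from the literature. So there is no proof strategy in the paper to compare against; you have supplied a self-contained sketch where the paper offers only a citation. Your parenthetical ``one could instead simply cite it'' is in fact exactly what the paper does.
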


\begin{remark}
Note that the proposition exactly determines the decomposition of parabolic inductions of the form $\Ind_{\fP_{n,m}}^{\fG_{n+m}}(\pi \boxtimes \bone)$, where $\pi$ is an irreducible representation of $\fG_n$.
\end{remark}

\subsection{$q$-binomial coefficients} \label{ss:qbinom}

We recall some definitions and simple results concerning $q$-binomial coefficients. In our applications, $q$ will be a prime power, but in what follows $q$ can be an indeterminate. For a natural number $n$, define the \defn{$q$-integer} by
\begin{displaymath}
[n]_q = \frac{q^n-1}{q-1} = 1+q+\cdots+q^{n-1},
\end{displaymath}
and the \defn{$q$-factorial} by
\begin{displaymath}
[n]_q! = [n]_q \cdot [n-1]_q \cdots [1]_q.
\end{displaymath}
The \defn{$q$-binomial coefficient} is defined by
\begin{displaymath}
\qbinom{n}{m} = \frac{[n]_q!}{[m]_q! \cdot [n-m]_q!}.
\end{displaymath}
This is a polynomial in $q$ with $\bZ$ coefficients. When $q$ is the cardinality of a finite field $\bF$, 
this coefficient counts the number of $m$-dimensional subspaces of $\bF^n$. There is a $q$-analog of the Pascal identity:
\begin{displaymath}
\qbinom{n}{m} - \qbinom{n-1}{m-1} = q^m \qbinom{n-1}{m}.
\end{displaymath}
There is also a $q$-analog of the binomial theorem:
\begin{displaymath}
\prod_{j=0}^{n-1} (1+q^j t) = \sum_{j=0}^n q^{\binom{j}{2}} \qbinom{n}{j} t^j
\end{displaymath}

\section{Semi-idempotent matrices} \label{s:semi}

\subsection{Overview}

Let $V$ be a finite dimensional $\bF$-vector space. A linear operator $T \colon V \to V$ is \defn{semi-idempotent} if there is a decomposition $V=X \oplus Y$ into $T$-stable subspaces such that $T \restriction_X$ is the identity and $T \restriction_Y$ is nilpotent. The spaces $X$ and $Y$ are then unique: $X$ is the 1-eigenspace of $T$ and $Y$ is the generalized 0-eigenspace. Equivalently, $T$ is semi-idempotent if $T$ acts by the identity on the image of $T^n$ for $n \gg 0$. We also say that a matrix is semi-idempotent if the corresponding operator is.

As we have discussed, Kov\'acs proved that the ideal $\fa_{n,r}$ of $k[\fM_n]$ has a unit. He actually proved more: he showed that the unit is a linear combination of semi-idempotent matrices. This is why these matrices will be important for us.

In this section, we study semi-idempotent matrices in detail. The culmination of this work is Proposition~\ref{prop:hk-independent-of-k}, which is the key result needed to prove our unit formula.

\subsection{Partial linear maps}

Let $V$ be a finite-dimensional linear space over $\bF$. By a \emph{partial linear map} on $V$ we mean a linear map $T \colon W \to V$, where $W \subset V$ is a linear subspace. In this case, we denote $\dom T = W$ and call $W$ \emph{the the domain of} $T$. If $T$ and $S$ are partial linear maps on $V$, we define their composition $T \circ S$ to be the partial linear map on $V$ whose domain is $\dom\left(T \circ S\right) = S^{-1}\left(\dom T\right)$ and is defined by the formula $\left(T \circ S\right)\left(v\right) = T\left(S\left(v\right)\right)$ for any $v \in \dom\left(T \circ S\right)$. If $T$ is a partial linear map on $V$, we denote
\begin{displaymath}
T^j = \underbrace{T \circ \dots \circ T}_{j \text{ times}}
\end{displaymath}
for any $j \ge 1$ and define $T^0 = \id_{V} \colon V \to V$. We warn the reader that $T^j$ will usually not have the same domain as $T$.

We say that a partial linear map $T$ is \emph{semi-idempotent} if there exists $N \ge 0$ such that $T$ acts trivially on $\im T^{N}$. In this case, we have that $T^{n} = T^{N}$ for any $n \ge N$ and we denote $T^{\infty} = T^N$. 

Suppose that $T \colon V \to V$ is semi-idempotent. Notice that since $T^{N}\left(T-1\right) = 0$, we have that the only possible eigenvalues of $T$ are $0$ and $1$ and that the algebraic multiplicity of $1$ with respect to $T$ equals its geometric multiplicity. Conversely, it is easy to show that if $T \colon V \to V$ has only eigenvalues $\in \left\{0,1\right\}$ and the algebraic multiplicity of $1$ with respect to $T$ equals its geometric multiplicity, then $T$ is semi-idempotent.

\subsection{Counting semi-idempotents}\label{ss:counting-semi-idempotents}

For $\ell \ge 1$ let
\begin{displaymath}
J_{\ell}\left(0\right) = \begin{pmatrix}
	0 & \\
	1 & 0 & \\
	& 1 & \ddots \\
	& & \ddots & 0 & \\
	& & & 1 & 0
\end{pmatrix}
\end{displaymath}
be the lower $\ell \times \ell$ Jordan matrix with eigenvalue $0$. Let $b_1,\dots,b_n$ be the standard basis of $\bF^n$.

\begin{lemma} \label{lem:extensions-of-jordan-matrix}
Let $n = r_1 + \dots + r_{s}$ for some $r_1,\dots,r_{s} > 0$, and let
\begin{displaymath}
A = \diag\left(J_{r_1}\left(0\right),\dots,J_{r_{s}}\left(0\right)\right).
\end{displaymath}
Then
\begin{enumerate}
\item The number of nilpotent matrices that can be obtained by changing the last column of $A$ equals the number of non-nilpotent semi-idempotent matrices that can be obtained by changing the last column of $A$. Both of these numbers equal $q^{n - r_{s}}$.
\item If $r_{s} > 1$ then the number of nilpotent matrices with rank equal to $\rank A$ that can be obtained by changing the last column of $A$ equals the number of non-nilpotent semi-idempotent matrices with rank equal to $\rank A$ that can be obtained by changing the last column of $A$, and both of these numbers equal $q^{n - r_{s} - \left(s - 1\right)}$.
\end{enumerate}
\end{lemma}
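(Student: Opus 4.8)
The plan is to write $A = \diag(J_{r_1}(0),\dots,J_{r_s}(0))$, to observe that the last column of $A$ is zero, and for $v = \sum_i v_i b_i \in \bF^n$ to let $A_v$ denote the matrix obtained from $A$ by replacing its last column with $v$; thus $A_v = A + M$, where $M$ is the matrix whose only nonzero column is the last one, equal to $v$. Both parts will follow once one knows \emph{exactly} which $v$ make $A_v$ nilpotent and which make $A_v$ a non-nilpotent semi-idempotent: for (a) one counts such $v$ over all of $\bF^n$, and for (b) one intersects with the subspace cut out by the rank constraint.

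The first step is to compute the characteristic polynomial of $A_v$. Since $A_v$ differs from $A$ by the rank-$\le 1$ matrix $M$, and $tI_n - A$ is invertible over $\bF(t)$ with block-diagonal inverse, the matrix determinant lemma gives
\begin{displaymath}
\det(tI_n - A_v) = t^{n-r_s}\bigl(t^{r_s} - c_{r_s} t^{r_s - 1} - \cdots - c_2 t - c_1\bigr),
\end{displaymath}
where $c_i = v_{n - r_s + i}$ for $1 \le i \le r_s$ are the coordinates of $v$ lying in the last Jordan block; the only computation needed is the last row of $(tI_{r_s} - J_{r_s}(0))^{-1}$, whose entries are powers of $t^{-1}$. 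Write $Q(t)$ for the monic degree $r_s$ second factor. The key point is that $Q$, hence the entire characteristic polynomial, depends only on the $r_s$ coordinates $c_1,\dots,c_{r_s}$.

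Next I would read off the two families. The matrix $A_v$ is nilpotent iff its characteristic polynomial is $t^n$, iff $Q(t) = t^{r_s}$, iff $c_1 = \cdots = c_{r_s} = 0$; the remaining $n - r_s$ coordinates of $v$ are unconstrained, giving $q^{n-r_s}$ nilpotent matrices. For the semi-idempotents I would first note that $A_v - I = (A - I) + N$, where $A - I$ is lower triangular and invertible and $\rank N \le 1$, so $\rank(A_v - I) \ge n - 1$; hence the eigenvalue $1$ of $A_v$ always has geometric multiplicity $\le 1$. Therefore $A_v$ is a non-nilpotent semi-idempotent iff its eigenvalues lie in $\{0,1\}$ and $1$ has algebraic multiplicity exactly $1$ (the geometric multiplicity being then automatically forced to agree), iff $\det(tI_n - A_v) = t^{n-1}(t-1)$, iff $Q(t) = t^{r_s-1}(t-1)$, iff $c_{r_s} = 1$ and $c_1 = \cdots = c_{r_s-1} = 0$; again $n - r_s$ coordinates of $v$ are free, giving $q^{n-r_s}$ such matrices. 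This establishes (a).

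For (b), the rank constraint is identified as follows: the columns of $A_v$ are the first $n-1$ columns of $A$ together with $v$, and the former span the subspace $U = \Span\{b_i : i \text{ is not the first index of one of the } s \text{ Jordan blocks}\}$, which has dimension $n - s = \rank A$; hence $\rank A_v = \rank A$ iff $v \in U$, i.e.\ iff $v_i = 0$ for every block-start index $i$. Intersecting the two descriptions above with $U$: for $v \in U$ the block-start coordinate $c_1 = v_{n - r_s + 1}$ of the last block already vanishes, so the nilpotency condition imposes $r_s - 1$ further independent linear conditions on the free coordinates of $U$, and — using $r_s > 1$, which is exactly what makes $v_n = 1$ compatible with $v \in U$ — the semi-idempotent condition likewise imposes $r_s - 1$ conditions; in both cases the count is $q^{(n-s) - (r_s - 1)} = q^{n - r_s - (s-1)}$, proving (b). I expect the one genuinely substantive point to be the characterization of semi-idempotency: it is essential that a rank-one perturbation cannot push $\dim\ker(A_v - I)$ above $1$, since this is what pins the algebraic multiplicity of the eigenvalue $1$ down to $1$ and thereby determines $Q(t)$ exactly; the determinant computation and the coordinate bookkeeping under the block-start constraint are then routine.
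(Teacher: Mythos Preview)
Your proof is correct and follows essentially the same strategy as the paper: reduce both questions to the characteristic polynomial of $A_v$, show that this polynomial depends only on the last $r_s$ coordinates $c_1,\dots,c_{r_s}$ of $v$, and then count. The only differences are cosmetic --- the paper computes $\det(tI-A_v)$ by observing that $A_v$ is block upper triangular with last diagonal block a companion matrix, whereas you use the matrix determinant lemma; and the paper bounds $\dim\ker(A_v-I)\le 1$ by noting that a fixed vector in $\Span(b_1,\dots,b_{n-1})$ would be an $A$-fixed vector, whereas you use that $A-I$ is invertible and rank-one perturbations drop rank by at most one.
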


\begin{proof}
If $B$ is a semi-idempotent matrix obtained by changing the last column of $A$ then the space consisting of the fixed vectors of $B$ is at most one-dimensional. Indeed, otherwise there would be a fixed vector of $A$ lying in $\Span_{\bF}\left(b_1,\dots,b_{n-1}\right)$, which is not true. Thus from the discussion above, the characteristic polynomial of $B$ has to be either $X^n$ or $X^{n-1}\left(X-1\right)$.

Conversely, if $B$ has characteristic polynomial $X^n$ or $X^{n-1}\left(X-1\right)$ then the only eigenvalues of $B$ are $0$ and $1$ and the algebraic multiplicity of $1$ in $B$ is the same as the geometric multiplicity of $1$ in $B$ and thus from the discussion above $B$ is semi-idempotent. It therefore suffices to examine the characteristic polynomial of a matrix $B$ obtained by changing the last column of $A$.

Let $B$ be the matrix obtained by replacing the last column of $A$ with the column given by $\transpose{\left(a_1,\dots,a_n\right)}$, where $a_1,\dots,a_n \in \bF$. Then $B$ is an upper triangular block matrix. Let us denote the last diagonal block by $B'$, that is, $B'$ is the $r_{s} \times r_{s}$ bottom right block of $B$. Then
\begin{displaymath}
B' = \begin{pmatrix}
	0 & 0 & 0 & \dots & 0 & a_{n - r_{s} + 1} \\
	1 & 0 & 0 & \dots & 0 & a_{n - r_{s} + 2} \\
	0 & 1 & 0 & \dots & 0 & a_{n - r_{s} + 3} \\
	\vdots & \vdots & \ddots & \ddots & \vdots & \vdots \\ 
	0 & 0 & \dots &  1 & 0 & a_{n - 1} \\
	0 & 0 & 0 & \dots & 1 & a_{n}
\end{pmatrix}.
\end{displaymath}
The characteristic polynomial of $B$ is the product of the characteristic polynomials of $J_{r_1}\left(0\right)$, $\dots$, $J_{r_{s - 1}}\left(0\right)$ and of the characteristic polynomial of $B'$. Notice that $B'$ is a companion matrix of a polynomial. Thus the characteristic polynomial of $B$ is
\begin{displaymath}
X^{n - r_{s}} \left(X^{r_{s}} - \sum_{i=1}^{r_{s}} a_{n - r_{s} + i} X^{i-1}\right).
\end{displaymath}
Therefore $B$ will be a nilpotent matrix if and only if $a_{n-r_{s} + i} = 0$ for every $1 \le i \le r_{s}$, and $B$ will be a non-nilpotent semi-idempotent matrix if and only if $a_n = 1$ and $a_{n-r_{s} + i} = 0$  for every $1 \le i \le s - 1$. In both cases we have $q^{n-r_{s}}$ options for $B$. This finishes the proof of the first part.

We move to count matrices $B$ with the same rank as $A$. In the nilpotent matrix case, $B$ will have the same rank as of $A$ if and only if \begin{equation}\label{eq:condition-for-rank-of-C-to-remain-the-same-as-rank-of-A}
	a_1 = a_{r_1 + 1} = \dots = a_{r_{s - 2} + 1} = 0.
\end{equation} 
In the non-nilpotent case, since $r_{s} > 1$, $B$ has $b_n$ as its $n-1$ column and therefore $B$ will have the same rank as of $A$ if and only if condition \eqref{eq:condition-for-rank-of-C-to-remain-the-same-as-rank-of-A} holds. Thus in both cases we have $q^{n-r_{s} - \left(s - 1\right)}$ options for $B$.
\end{proof}

We define the \defn{stable rank} of a matrix $m$, denoted $\srk{m}$, to be the rank of $m^n$ for $n \gg 0$.

\begin{lemma}\label{lem:extension-of-semi-idempotent-jordan-matrix}
Let
\begin{displaymath}
A = \diag\left(J_{r_1}\left(0\right),\dots,J_{r_{s}}\left(0\right)\right)
\end{displaymath}
as above. Let $A' = \diag\left(I_{\ell}, A\right)$ for some $\ell \ge 0$.
\begin{enumerate}
\item For $j \ge 0$ let $e\left(A', j\right)$ be the number of semi-idempotent $C$ matrices that can be obtained by changing the last column of $A'$ such that $\srk{C} = j$. Then
\begin{displaymath}
e\left(A',\srk{A'} \right) = q^{\srk{A'}} e\left(A',\srk{A'} + 1\right).
\end{displaymath}
\item For $i,j \ge 0$ let $e\left(A', i,j\right)$ be the number of semi-idempotent $C$ matrices that can be obtained by changing the last column of $A'$ such that $\rank C = i$ and $\srk{C} = j$. If $s > 0$ and $r_{s} > 1$ then
\begin{displaymath}
e\left(A', \rank A', \srk{A'} \right) = q^{\srk{A'}} e\left(A', \rank A', \srk{A'} + 1\right).
\end{displaymath}
\end{enumerate}
\end{lemma}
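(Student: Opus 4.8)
The plan is to reduce both parts to Lemma~\ref{lem:extensions-of-jordan-matrix} by analyzing the block structure of the modified matrix. Since $s \ge 1$, the last column of $A'$ is the (zero) last column of $J_{r_s}(0)$, so a matrix obtained from $A'$ by changing its last column has the block form
\[
C = \begin{pmatrix} I_\ell & u \\ 0 & B \end{pmatrix}
\]
relative to $\bF^\ell \oplus \bF^n$, where $B \in \rM_n(\bF)$ is obtained from $A$ by changing its last column and $u \in \rM_{\ell\times n}(\bF)$ has all columns zero except possibly the last, which we call $\bar v \in \bF^\ell$. As $(\bar v, B)$ ranges over all pairs, $C$ ranges over exactly the matrices counted by $e(A',-)$. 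From this form: $\chi_C(X) = (X-1)^\ell \chi_B(X)$; the column space of $C$ is the (internal) direct sum of $\bF^\ell \oplus 0$ and $0 \oplus (\text{column space of }B)$, so $\rank C = \ell + \rank B$; and $\ker(C-I) = \bF^\ell \oplus \{\, y \in \ker(B-I) : y_n\bar v = 0 \,\}$, where $y_n$ denotes the last coordinate of $y \in \bF^n$. Also $\srk A' = \ell$ and $\rank A' = \ell + \rank A$.

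The key structural fact is that $B$ agrees with the nilpotent operator $A$ on the hyperplane $H = \{y \in \bF^n : y_n = 0\} = \Span_\bF(b_1,\dots,b_{n-1})$; hence a nonzero fixed vector of $B$ lying in $H$ would be a nonzero fixed vector of $A$, which is impossible. So $\ker(B-I)\cap H = 0$, and in particular $\dim\ker(B-I) \le 1$. Using this I would determine, for each $(\bar v, B)$, whether $C$ is semi-idempotent and compute $\srk C$, via the criterion that $C$ is semi-idempotent iff its eigenvalues lie in $\{0,1\}$ and the algebraic and geometric multiplicities of the eigenvalue $1$ coincide. If $\bar v \ne 0$, the kernel above equals $\bF^\ell \oplus (\ker(B-I)\cap H) = \bF^\ell$, of dimension $\ell$; since the algebraic multiplicity of $1$ in $\chi_C$ is $\ell$ plus the multiplicity of $1$ as a root of $\chi_B$, the matrix $C$ is semi-idempotent exactly when all roots of $\chi_B$ are $0$, i.e.\ when $B$ is nilpotent, and then $\srk C = \ell$. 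If $\bar v = 0$, then $C = \diag(I_\ell, B)$ is semi-idempotent iff $B$ is, with $\srk C = \ell + \srk B$; and because $\dim\ker(B-I)\le 1$, a semi-idempotent $B$ has $\srk B = 0$ (i.e.\ $B$ nilpotent) or $\srk B = 1$ (i.e.\ $B$ a non-nilpotent semi-idempotent). Altogether: $C$ is semi-idempotent with $\srk C = \ell$ exactly when $B$ is nilpotent (with $\bar v$ arbitrary), and semi-idempotent with $\srk C = \ell+1$ exactly when $\bar v = 0$ and $B$ is a non-nilpotent semi-idempotent.

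It then remains to count. For part~(a), Lemma~\ref{lem:extensions-of-jordan-matrix}(a) gives $q^{n-r_s}$ nilpotent and $q^{n-r_s}$ non-nilpotent semi-idempotent matrices obtainable from $A$ by changing its last column, so $e(A',\ell) = q^\ell\cdot q^{n-r_s}$ (the $q^\ell$ from the free choice of $\bar v$) and $e(A',\ell+1) = q^{n-r_s}$, which is exactly $e(A',\srk A') = q^{\srk A'}\,e(A',\srk A'+1)$. For part~(b) one imposes additionally $\rank C = \rank A'$, i.e.\ $\rank B = \rank A$; when $r_s > 1$, Lemma~\ref{lem:extensions-of-jordan-matrix}(b) supplies $q^{n-r_s-(s-1)}$ matrices of each of the two types subject to this rank condition, and the same bookkeeping gives $e(A',\rank A',\ell) = q^\ell\cdot q^{n-r_s-(s-1)}$ and $e(A',\rank A',\ell+1) = q^{n-r_s-(s-1)}$, once more in the ratio $q^{\srk A'}$.

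The step I expect to be the crux is the case $\bar v \ne 0$ in the second paragraph: it is precisely the vanishing $\ker(B-I)\cap H = 0$ that forces such $C$ to have stable rank $\ell$ and never $\ell+1$, and this is what makes the two counts differ by exactly the clean factor $q^\ell$ (from the unconstrained $\bar v$) rather than by a messier quantity. Everything else is routine block-matrix bookkeeping, provided one consistently uses the eigenvalue-plus-multiplicity characterization of semi-idempotency and carefully tracks $\rank C$ and $\srk C$ through the block form.
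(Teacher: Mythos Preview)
Your proposal is correct and follows essentially the same approach as the paper: both reduce to Lemma~\ref{lem:extensions-of-jordan-matrix} via the observation that any nonzero $B$-fixed vector must have nonzero last coordinate, then count using the characteristic polynomial. Your presentation is organized as a case split on whether $\bar v = 0$, while the paper splits instead on whether $\srk C$ equals $\ell$ or $\ell+1$ and works in coordinates, but the underlying argument and the resulting counts ($q^{\ell+n-r_s}$ versus $q^{n-r_s}$, and the rank-constrained analogues) are identical.
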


\begin{proof}
Suppose that $C$ is a matrix such that $\srk{C} = \srk{A'} + 1$. Then there exists a non-zero fixed $C$-fixed vector $v = \sum_{i=1}^{\ell+n} \alpha_i b_i \notin \Span_{\bF}\left(b_1,\dots,b_{\ell+n-1}\right)$ where $\alpha_1,\dots,\alpha_{\ell+n} \in \bF$. Since $v \notin \Span_{\bF}\left(b_1,\dots,b_{\ell+n-1}\right)$, we may assume without loss of generality that $\alpha_{\ell+n} = 1$. Then $Cv = v$, which implies
\begin{displaymath}
Cb_{\ell+n} = b_{\ell+n} + \sum_{j=\ell+1}^{\ell+n-1}\alpha_j \left(b_j - \diag\left(I_{\ell}, A\right) b_j\right).
\end{displaymath}
Thus we must have that $C = \diag\left(I_{\ell}, B\right)$, where $B$ is a semi-idempotent matrix with a non-zero fixed vector obtained by changing the last column of $A$ as in Lemma~\ref{lem:extensions-of-jordan-matrix}. Conversely, it is clear that if $B$ is a semi-idempotent matrix with a fixed vector as in Lemma~\ref{lem:extensions-of-jordan-matrix} then $C = \diag\left(I_{\ell}, B\right)$ is a semi-idempotent matrix obtained by changing the last column of $A'$ and that $\srk{C} = \ell + 1 = \srk{A'} + 1$. Thus we see that the number of semi-idempotent matrices $C$ obtained from changing the last column of $A$ that satisfy $\srk{C} = \srk{A'} + 1$ is $q^{n-r_{s}}$ as given in Lemma~\ref{lem:extensions-of-jordan-matrix}. If $r_{s} > 1$, then by Lemma \ref{lem:extensions-of-jordan-matrix} we have that there are $q^{n-r_{s}-\left(s-1\right)}$ choices of $B$ with $\rank B = \rank A$ and it is clear that for each such $B$ we have that $C = \diag\left(I_{\ell}, B\right)$ satisfies $\rank C = \rank A'$.

We move to count the number of semi-idempotent matrices $C$ obtained by changing the last column of $A'$ such that $\srk{C} = \srk{A'} = \ell$. Let $C$ be the matrix obtained by replacing the last column of $A'$ with the column $\transpose{\left(a_{-\left(\ell-1\right)},a_{-\left(\ell-2\right)},\dots,a_{0},a_1,\dots,a_n\right)}$, where $a_{-\left(\ell-1\right)},\dots,a_n \in \bF$. As in the proof of Lemma \ref{lem:extensions-of-jordan-matrix}, we have that $C$ is a semi-idempotent matrix such that $\srk{C} = \srk{A'}$ if and only if the characteristic polynomial of $C$ is the characteristic polynomial of $A'$ which is $X^n \left(X-1\right)^{\ell}$. As in the proof of Lemma~\ref{lem:extensions-of-jordan-matrix}, we have that the characteristic polynomial of $C$ is
\begin{displaymath}
\left(X-1\right)^{\ell} X^{n-r_{s}} \left(X^{r_{s}} - \sum_{i=1}^{r_{s}} a_{n - r_{s} + i} X^{i-1}\right)
\end{displaymath}
and thus $C$ is a matrix of the desired form if and only if we have $a_{n-r_{s} + i} = 0$ for every $1 \le i \le r_{s}$. Therefore there are $q^{\ell + n - r_{s}}$ options for semi-idempotent matrices $C$ obtained from changing the last column such that $\srk{C} = \srk{A'}$. As in the proof of Lemma~\ref{lem:extensions-of-jordan-matrix}, a matrix $C$ as above satisfies $\rank C = \rank A'$ if and only if $a_1 = a_{r_1+1} = \dots = a_{r_{s - 2} + 1} = 0$ and thus there are $q^{\ell + n - r_{s} - \left(s - 1\right)}$ options for $C$ satisfying $\rank C = \rank A'$.
\end{proof}

Next, we will use results from \cite[Section 5.6]{Steinberg} to obtain the following coordinate-free lemma.

\begin{lemma} \label{lem:coordinate-free-extension-of-corank-1-partial-semi-idempotent}
Suppose that $T$ is a partial linear map with domain $W \subset V$ of co-dimension~1.
\begin{enumerate}
\item \label{item:identity-for-number-of-matrices-with-given-rank-at-infinity} Let
\begin{displaymath}
e\left(T,j\right) = \#\left\{ S \colon V \to V \text{ semi-idempotent} \mid S\restriction_{W} = T,  \srk{S} = j \right\}.
\end{displaymath}
Then we have the following equality
\begin{equation*}
e\left(T,\srk{T} \right) = q^{\srk{T}} \cdot e\left(T, \srk{T} + 1\right).
\end{equation*}
\item \label{item:identity-for-number-of-matrices-with-given-rank-at-infinity-and-same-rank} Let
\begin{displaymath}
e\left(T,i,j\right) = \#\left\{ S \colon V \to V \text{ semi-idempotent} \mid S\restriction_{W} = T, \rank S = i, \srk{S} = j \right\}.
\end{displaymath}
If $\im T \not\subset W$, then we have the following equalities
\begin{equation*}
e\left(T, \rank T, \srk{T} \right) = q^{\srk{T}} \cdot e\left(T, \rank T, \srk{T} + 1\right),
\end{equation*}
and
\begin{equation*}
e\left(T, \rank T + 1, \srk{T} \right) = q^{\srk{T}} \cdot e\left(T, \rank T + 1, \srk{T} + 1\right).
\end{equation*}
\end{enumerate}
\end{lemma}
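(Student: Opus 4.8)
The plan is to conjugate $T$ into the standard block form of Lemma~\ref{lem:extension-of-semi-idempotent-jordan-matrix}, and then to read off the two rank-refined identities in part~(b) formally from part~(a) together with the first identity of part~(b).

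If $T$ admits no semi-idempotent extension to $V$, then all the sets being counted are empty and the three identities read $0=0$; so assume there is a semi-idempotent $S\colon V\to V$ with $S\restriction_W=T$, and write $V=X\oplus Y$ for the associated decomposition, where $S\restriction_X=\id$ and $S\restriction_Y$ is nilpotent. One checks that $\dom T^j=\{v : v,Sv,\dots,S^{j-1}v\in W\}$ is a decreasing chain of subspaces stabilizing to the largest $S$-invariant subspace $W_0\subset W$, that $W_0=(X\cap W)\oplus(W_0\cap Y)$ since $W_0$ is $S$-invariant and $X\cap W\subset W_0$, and hence that $\im T^j=S^j(W_0)$ stabilizes to $X\cap W$, a subspace on which $T$ acts as the identity. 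Thus $T$ is a semi-idempotent partial linear map with $\srk{T}=\dim(X\cap W)$; and since $W$ has codimension~$1$ and $S(W)=\im T$, we get $\srk{S}\in\{\srk{T},\srk{T}+1\}$ and $\rank S\in\{\rank T,\rank T+1\}$. Consequently $e(T,j)=0$ unless $j\in\{\srk{T},\srk{T}+1\}$, and $e(T,i,j)=0$ unless $i\in\{\rank T,\rank T+1\}$ and $j\in\{\srk{T},\srk{T}+1\}$.

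The numbers $e(T,\cdot)$ depend only on the $\GL(V)$-conjugacy class of the pair $(W\subset V,\ T\colon W\to V)$, because conjugation by $g\in\GL(V)$ carries semi-idempotent extensions of $T$ bijectively onto those of $gTg^{-1}$, preserving rank and stable rank. Using the structure theory of partial linear maps (\cite[Section~5.6]{Steinberg}), a corank-$1$ semi-idempotent partial linear map is conjugate to $A'\restriction_W$, where $A'=\diag(I_\ell,J_{r_1}(0),\dots,J_{r_s}(0))$ with $\ell=\srk{T}$, $s\ge 1$, $r_i\ge 1$, $n:=r_1+\dots+r_s=\dim V-\ell$, and $W=\Span_{\bF}(b_1,\dots,b_{n+\ell-1})$. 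With this identification the semi-idempotent extensions of $T$ are precisely the semi-idempotent matrices obtained by changing the last column of $A'$, so $e(T,j)=e(A',j)$ and $e(T,i,j)=e(A',i,j)$ in the notation of Lemma~\ref{lem:extension-of-semi-idempotent-jordan-matrix}. Moreover $\srk{A'}=\ell=\srk{T}$; one computes $\rank A'=\ell+n-s=\rank T$ (the last equality since $b_{n+\ell}\in\ker A'\setminus W$); and $\im T\not\subset W$ if and only if $r_s>1$ (if $r_s=1$ the last block contributes only the vector $b_{n+\ell}\notin\dom T$ and $A'(W)\subset W$, whereas if $r_s\ge 2$ the relation $A'b_{n+\ell-1}=b_{n+\ell}$ forces $\im T\not\subset W$).

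With this dictionary part~(a) is exactly Lemma~\ref{lem:extension-of-semi-idempotent-jordan-matrix}(a). Since $\im T\not\subset W$ forces $s>0$ and $r_s>1$, the first identity of part~(b) is exactly Lemma~\ref{lem:extension-of-semi-idempotent-jordan-matrix}(b). For the second identity of part~(b), set $\rho=\rank T$ and $\sigma=\srk{T}$; the vanishing observations give $e(T,\sigma)=e(T,\rho,\sigma)+e(T,\rho+1,\sigma)$ and $e(T,\sigma+1)=e(T,\rho,\sigma+1)+e(T,\rho+1,\sigma+1)$, so substituting into part~(a) and subtracting the first identity of part~(b) yields $e(T,\rho+1,\sigma)=q^{\sigma}e(T,\rho+1,\sigma+1)$, as required. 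The main obstacle is the normal-form reduction of the previous paragraph: extracting from \cite[Section~5.6]{Steinberg} that every corank-$1$ semi-idempotent partial linear map is conjugate to such an $A'\restriction_W$, and matching the invariants $\srk$, $\rank$, and the dichotomy $\im T\subset W$ versus $\im T\not\subset W$ with $\ell$, $\ell+n-s$, and $r_s=1$ versus $r_s>1$ respectively. Granting that, the rest is the previous lemma together with the elementary bookkeeping indicated above.
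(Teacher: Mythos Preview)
Your approach is essentially the same as the paper's: reduce to the matrix normal form $A'=\diag(I_\ell,J_{r_1}(0),\dots,J_{r_s}(0))$ via conjugation, invoke Lemma~\ref{lem:extension-of-semi-idempotent-jordan-matrix} for part~(a) and the first identity of~(b), and obtain the second identity of~(b) by subtraction. The paper differs only in that it spells out the normal-form reduction you flag as ``the main obstacle'' (decomposing $W=U\oplus\ker T$, applying \cite[Lemma~5.40]{Steinberg} to the injective piece $T\restriction_U$, and then adjusting the basis so that the last vector lies in $V\setminus W$ while the others in $\ker T$), whereas you treat the trivial no-extension case and the constraints $\srk S\in\{\srk T,\srk T+1\}$, $\rank S\in\{\rank T,\rank T+1\}$ more explicitly than the paper does.
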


Before the proof, we recall some notations from \cite[Sections 3.3 and 5.6]{Steinberg}. A \emph{partial map} on a finite set $\cB$ is a map $f \colon \dom f \to \cB$, for some subset $\dom f \subset \cB$, which we call the \emph{domain} of $f$. A \emph{partial injective map} on $\cB$ is an injective map $f \colon \dom f \to \cB$ as above.

Given partial maps $f$ and $g$ on $\cB$, we define their composition $f \circ g$ as the partial map on $\cB$ with domain $\dom\left(f\circ g\right) = f^{-1}\left(\dom g\right)$ given by the rule $\left(f \circ g\right)\left(b\right) = f\left(g\left(b\right)\right)$ for any $b \in \cB$. We define $f^0 = \id_{\cB} \colon \cB \to \cB$ and $f^j = \underbrace{f \circ \dots \circ f}_{j \text{ times}}$ for $j \ge 1$. We say that $f$ is \emph{semi-idempotent} if it acts trivially on $\im f^{\infty} = \bigcap_{j=0}^{\infty} \im f^{j}$.

Suppose now that $V$ is a finite dimensional vector space $V$ over $\bF$ and let $\cB$ be a basis of $V$. Given a partial function $f$ on $\cB$, we define a partial linear map $T_f$ on $V$ as follows. Its domain is $\dom T_f = \Span_{\bF}\left(\dom f\right)$ and it is defined by $T_f b = f\left(b\right)$ for any $b \in \dom f$. We also define a linear map $S_f \colon V \to V$ with $S_f\restriction_{\dom T_f} = T_f$ by extending $T_f$ to be zero on the remaining basis elements in $\cB \setminus \dom f$.

\begin{proof}[Proof of Lemma \ref{lem:coordinate-free-extension-of-corank-1-partial-semi-idempotent}]
We choose an arbitrary decomposition $W = U \oplus \ker T$. Then the restriction $T \restriction_U$ is injective. By \cite[Lemma 5.40]{Steinberg} we can find a basis $\cB$ of $V$ and a partial injective semi-idempotent map $f$ on $\cB$ such that $T \restriction_U = T_f$. We may choose $\cB$ so that its first $\dim U$ elements lie in $U$, the next $\dim \ker T$ elements lie in $V \setminus U$, and the last element of $\cB$ lies in $V \setminus W$. We define $S = S_f \colon V \to V$. By \cite[Lemma 5.36]{Steinberg}, $\rank S^{\infty} = \rank T^{\infty}$. There exists a permutation $\cB'$ of $\cB$ such that the matrix representation of $S$ with respect to $\cB'$ is of the form $A'$ as in Lemma \ref{lem:extension-of-semi-idempotent-jordan-matrix} with $1 \le r_1, \dots, r_{s}$, and such that the last element of $\cB'$ still lies in $V \setminus W$. Using the last element of $\cB'$ we replace the elements that lie in $V \setminus U$ with elements of $\ker T$, except for the last element which we keep unchanged. Denote the modified basis by $\cB''$. The matrix representation of $S$ with respect to $\cB''$, which we denote $A''$, is the same as $A'$, except for the last row that might now have entries before the first $1$ value of the last row of $A'$ (if such entry exists). We have the equality $A'' = A' \cdot \left(\begin{smallmatrix}
	u\\
	& 1
\end{smallmatrix}\right)$, where $u \in \GL_{\ell+n-1}\left(\bF\right)$ is a lower unipotent matrix with zeros everywhere outside of its diagonal, except for the last row which consists of the first $\ell+n-1$ coordinates of the last row of $A''$.

For a matrix $B \in \rM_{\ell+n}\left(\bF\right)$ we have that the linear map represented by $B$ with respect to the basis $\cB''$ agrees with $T$ on $W$ if and only if $A'' \left(\begin{smallmatrix}
I_{n-1}\\
& 0
\end{smallmatrix}\right) = B \left(\begin{smallmatrix}
I_{n-1}\\
& 0
\end{smallmatrix}\right)$ which is equivalent to $A' \left(\begin{smallmatrix}
u\\
& 0
\end{smallmatrix}\right) = B \left(\begin{smallmatrix}
I_{n-1}\\
& 0
\end{smallmatrix}\right)$ which in turn is equivalent to $A' \left(\begin{smallmatrix}
I_{n-1}\\
& 0
\end{smallmatrix}\right) = B \left(\begin{smallmatrix}
u^{-1} \\
& 1
\end{smallmatrix}\right) \left(\begin{smallmatrix}
I_{n-1}\\
& 0
\end{smallmatrix}\right)$. Therefore, we have $e\left(T, j\right) = e\left(A', j\right)$ and $e\left(T, i, j\right) = e\left(A', i, j\right)$, where $A'$ is the matrix representing $S$ in basis $\cB'$. Thus the first part of the lemma follows from the first part of Lemma \ref{lem:extension-of-semi-idempotent-jordan-matrix}.

Regarding the second part of the lemma, if $\im T \not\subset W$ then the matrix representation of $S$ with respect to basis $\cB''$, which we denoted $A''$, has a non-zero entry in its last row and thus so does $A'$. Therefore in the notation of Lemma \ref{lem:extension-of-semi-idempotent-jordan-matrix}, we have that $r_{s} > 1$, which from the second part of Lemma \ref{lem:extension-of-semi-idempotent-jordan-matrix} implies that \begin{equation*}
e\left(A', \rank A', \srk{A'} \right) = q^{\srk{A'}} \cdot e\left(A', \rank A', \srk{A'} + 1\right).
\end{equation*}
Thus the first equality in part (\ref{item:identity-for-number-of-matrices-with-given-rank-at-infinity-and-same-rank}) now follows. The second equality in part (\ref{item:identity-for-number-of-matrices-with-given-rank-at-infinity-and-same-rank}) follows from subtracting the equality of part (\ref{item:identity-for-number-of-matrices-with-given-rank-at-infinity}) from the first equality of part (\ref{item:identity-for-number-of-matrices-with-given-rank-at-infinity-and-same-rank}).
\end{proof}

\subsection{Some semi-idempotent sums}\label{ss:some-semi-idempotent-sums}

Put
\begin{displaymath}
\mu(n) = (-1)^n q^{\binom{n}{2}}.
\end{displaymath}
For every $0 \le j \le n$, let $V_j = \Span_{\bF}\left(b_1,\dots,b_j\right)$. Throughout this subsection we will identify partial linear maps $T \colon V_j \to V_n$ (with domain $V_j$) with $n \times n$ matrices with zeros at their $j+1,j+2,\dots,n$ columns.

The next two propositions allow us to reduce certain sums over semi-idempotent maps that will appear in the proof of Theorem~\ref{thm:unit}, and they will be used repeatedly in the proof. The first one tells us that for certain sums over semi-idempotent elements $T \colon V_{n-1} \to V_n$ we only need to consider semi-idempotent elements of the form $T \colon V_{n-1} \to V_{n-1}$.

\begin{proposition} \label{prop:vanishing-identity-of-coefficient-of-semi-idempotent}
Let $T \colon V_{n-1} \to V_n$ be a semi-idempotent partial linear map with domain $V_{n-1}$ such that $\im T \not\subset V_{n-1}$, and let $r \ge 0$. Then for any function $f \colon \bZ_{\ge 0} \to \bZ$, we have 
\begin{equation}\label{eq:sum-of-semi-idempotent-coefficient-for-small-rank}
\sum_{\substack{S \colon V_n \to V_n\\
		S \text{ semi-idempotent}\\
		\rank S \le r\\
		S \restriction_{V_{n-1}} = T}}
\mu(\srk{S}) f\left(\rank S\right) = 0.
\end{equation}
\end{proposition}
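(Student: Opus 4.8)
The plan is to fix the partial linear map $T \colon V_{n-1} \to V_n$ with $\im T \not\subset V_{n-1}$, and to organize the sum on the left-hand side of \eqref{eq:sum-of-semi-idempotent-coefficient-for-small-rank} according to the value of $\rank S = i$, writing it as
\begin{displaymath}
\sum_{i \le r} f(i) \sum_{\substack{S \text{ semi-idempotent} \\ S\restriction_{V_{n-1}} = T \\ \rank S = i}} \mu(\srk S)
= \sum_{i \le r} f(i) \sum_{j \ge 0} \mu(j)\, e(T,i,j),
\end{displaymath}
in the notation of Lemma~\ref{lem:coordinate-free-extension-of-corank-1-partial-semi-idempotent}. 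Since $f$ is an arbitrary function $\bZ_{\ge 0} \to \bZ$, it suffices to prove that for \emph{every} fixed value of $i$ with $0 \le i \le r$ (indeed, for every $i$), the inner sum $\sum_{j} \mu(j)\, e(T,i,j)$ vanishes. So I would reduce at once to the claim: for all $i \ge 0$,
\begin{displaymath}
\sum_{j \ge 0} \mu(j)\, e(T,i,j) = 0.
\end{displaymath}

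Next I would try to show this by grouping the $j$-sum into consecutive pairs. The key inputs are the two recursions of Lemma~\ref{lem:coordinate-free-extension-of-corank-1-partial-semi-idempotent}(\ref{item:identity-for-number-of-matrices-with-given-rank-at-infinity-and-same-rank}): since $\im T \not\subset W = V_{n-1}$, we have, writing $\rho = \rank T$ and $\sigma = \srk T$,
\begin{displaymath}
e(T, \rho, \sigma) = q^{\sigma}\, e(T, \rho, \sigma+1), \qquad
e(T, \rho+1, \sigma) = q^{\sigma}\, e(T, \rho+1, \sigma+1).
\end{displaymath}
The crucial structural observation I would establish is that any semi-idempotent extension $S$ of $T$ has $\rank S \in \{\rho, \rho+1\}$ (extending a corank-$1$ partial map to all of $V$ can only increase the rank by $0$ or $1$), and similarly $\srk S \in \{\sigma, \sigma+1\}$ (the stable rank is the dimension of the $1$-eigenspace, which grows by at most one when we pass from the corank-$1$ domain $W$ to $V$). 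Hence $e(T,i,j)$ is zero unless $i \in \{\rho,\rho+1\}$ and $j \in \{\sigma,\sigma+1\}$, so the sum $\sum_j \mu(j) e(T,i,j)$ is supported on $j \in \{\sigma, \sigma+1\}$ and equals
\begin{displaymath}
\mu(\sigma)\, e(T,i,\sigma) + \mu(\sigma+1)\, e(T,i,\sigma+1).
\end{displaymath}
For $i = \rho$ and $i = \rho+1$ this is $\mu(\sigma) q^{\sigma} e(T,i,\sigma+1) + \mu(\sigma+1) e(T,i,\sigma+1) = \bigl(\mu(\sigma) q^{\sigma} + \mu(\sigma+1)\bigr) e(T,i,\sigma+1)$, and $\mu(\sigma) q^{\sigma} + \mu(\sigma+1) = (-1)^{\sigma} q^{\binom{\sigma}{2}} q^{\sigma} + (-1)^{\sigma+1} q^{\binom{\sigma+1}{2}} = (-1)^{\sigma} q^{\binom{\sigma+1}{2}} - (-1)^{\sigma} q^{\binom{\sigma+1}{2}} = 0$, using $\binom{\sigma}{2} + \sigma = \binom{\sigma+1}{2}$. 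For all other values of $i$ the sum is empty. Summing $f(i)$ times these vanishing contributions over $i \le r$ gives the result.

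The main obstacle is the structural lemma that a semi-idempotent extension $S$ of a corank-$1$ partial semi-idempotent map $T$ satisfies $\srk S \in \{\srk T, \srk T + 1\}$ and $\rank S \in \{\rank T, \rank T+1\}$ — in other words, that the quantities $e(T,i,j)$ are indeed concentrated on exactly the four $(i,j)$ appearing in Lemma~\ref{lem:coordinate-free-extension-of-corank-1-partial-semi-idempotent}. The rank statement is elementary (changing one column of a matrix changes the rank by at most one). For the stable rank, I would argue that the $1$-eigenspace of $S$ meets $W$ in a space that is $S\restriction_W$-fixed, hence $T$-fixed, so is contained in $\im T^{\infty}$ and has dimension $\srk T$; since the whole $1$-eigenspace of $S$ has codimension at most one more (as $W$ has codimension $1$ in $V$), $\srk S \le \srk T + 1$, while $\srk S \ge \srk T$ because $S$ extends $T$ and any $T$-fixed vector in $\im T^{\infty} \subset W$ is also $S$-fixed. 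Once this concentration is in hand, everything else is the two-line cancellation $\mu(\sigma) q^{\sigma} + \mu(\sigma+1) = 0$ together with the recursions already proved. I would also double-check the edge cases $\sigma = 0$ (where $e(T,i,1)$ counts non-nilpotent extensions and $\binom{0}{2} = 0$) to make sure no term is lost.
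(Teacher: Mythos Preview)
Your proof is correct and follows essentially the same approach as the paper: both arguments use the concentration of $e(T,i,j)$ on $(i,j) \in \{\rank T, \rank T+1\} \times \{\srk T, \srk T+1\}$, invoke Lemma~\ref{lem:coordinate-free-extension-of-corank-1-partial-semi-idempotent}(\ref{item:identity-for-number-of-matrices-with-given-rank-at-infinity-and-same-rank}) under the hypothesis $\im T \not\subset V_{n-1}$, and finish with the cancellation $\mu(\sigma)q^{\sigma}+\mu(\sigma+1)=0$. Your organization (fixing $i$ first and showing each inner $j$-sum vanishes) is slightly cleaner than the paper's, which instead distinguishes the cases $\rank T \le r-1$ versus $\rank T = r$ before applying the same cancellation.
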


\begin{proof}
Let us denote $\ell = \srk{T}$. For $S$ as in the sum we must have either $\srk{S} = \ell$ or $\srk{S} = \ell + 1$. Otherwise there would be a non-zero fixed vector of $S$ lying in $V_{n-1}$ that is not a fixed vector of $T$, which is impossible. We may as well assume $\rank T \le r$, otherwise both sides of the equation vanish. If $\rank T \le r - 1$, then the rank of $S$ must be either $\rank T$ or $\rank T + 1$. If $\rank T = r$, then the rank of $S$ must be $\rank T$. Thus the sum \eqref{eq:sum-of-semi-idempotent-coefficient-for-small-rank} can be rewritten as
\begin{displaymath}
\sum_{j=\rank T}^{\min\left(r, \rank T + 1\right)} \left(-1\right)^{\ell} \left(q^{\binom{\ell}{2}} e\left(T, j, \ell\right) - q^{\binom{\ell + 1}{2}} e\left(T, j, \ell + 1\right)\right) f\left(j\right)
\end{displaymath}
Since $\binom{\ell+1}{2} = \binom{\ell}{2} + \ell$, by Lemma~\ref{lem:coordinate-free-extension-of-corank-1-partial-semi-idempotent} we have the equality 
\begin{displaymath}
q^{\binom{\ell}{2}} e\left(T,j,\ell\right) = q^{\binom{\ell+1}{2}} e\left(T,j,\ell+1\right),
\end{displaymath}
which implies the desired identity.
\end{proof}

The next proposition allows us to evaluate certain sums corresponding to extensions of semi-idempotent linear maps $T \colon V_{n-1} \to V_{n-1}$.

\begin{proposition}\label{prop:reduction-of-unit-inner-sum}
Let $T \colon V_{n-1} \to V_{n-1}$ be a semi-idempotent element of rank $\le r$. For any function $f \colon \bZ_{\ge 0} \to \bZ$ denote 
\begin{displaymath}
E_n\left(f,T\right) = \sum_{\substack{S \colon V_n \to V_n\\
	S \text{ semi-idempotent}\\
	\rank S \le r\\
	S \restriction_{V_{n-1}} = T}} \mu(\srk{S}) f\left(\rank S\right).
\end{displaymath}
\begin{enumerate}
\item If $\rank T \le r-1$ then
\begin{displaymath}
E_n\left(f, T\right) = \mu(\srk{T}) q^{\rank T} \left(f\left(\rank T\right) - f\left(\rank T + 1\right)\right).
\end{displaymath}
\item If $\rank T = r$ then
\begin{displaymath}
E_n\left(f,T\right) =  \mu(\srk{T}) \cdot  \left(q^{\rank T} f\left(\rank T\right)\right).
\end{displaymath}
\end{enumerate}
\end{proposition}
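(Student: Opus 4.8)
The plan is to directly classify the semi-idempotent maps $S \colon V_n \to V_n$ occurring in the sum defining $E_n(f,T)$ and then add up their contributions. Since $\im T \subseteq V_{n-1}$ by hypothesis, any linear extension $S$ of $T$ to $V_n$ has block form $S = \left(\begin{smallmatrix} T & u \\ 0 & c \end{smallmatrix}\right)$ for a unique column $u \in V_{n-1}$ and scalar $c \in \bF$, where $S b_n = u + c b_n$; its characteristic polynomial is $\chi_T(X)(X-c) = X^{n-1-\srk{T}}(X-1)^{\srk{T}}(X-c)$, using that $T$ is semi-idempotent. So for $S$ to be semi-idempotent we must have $c \in \{0,1\}$. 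Exactly as in the proof of Proposition~\ref{prop:vanishing-identity-of-coefficient-of-semi-idempotent}, a vector of $V_{n-1}$ is $S$-fixed iff it is $T$-fixed, so the $1$-eigenspace of $S$ contains that of $T$ and exceeds it by at most one dimension; hence $\srk{S} \in \{\srk{T}, \srk{T}+1\}$. Fix a decomposition $V_{n-1} = X \oplus Y$ with $T\restriction_X = \id$ and $T\restriction_Y$ nilpotent, so that $\dim X = \srk{T}$ and $T - \id$ kills $X$ and is invertible on $Y$; in particular $\im(T - \id) = Y$.

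Consider first $c = 0$. The algebraic multiplicity of $1$ in $\chi_S$ is then $\srk{T}$, and since the $1$-eigenspace of $S$ already contains the $\srk{T}$-dimensional $1$-eigenspace of $T$, the two coincide; thus $S$ is automatically semi-idempotent with $\srk{S} = \srk{T}$. Its rank is $\dim(\im T + \bF u)$, which is $\rank T$ for the $q^{\rank T}$ choices $u \in \im T$ and $\rank T + 1$ for the remaining $q^{n-1} - q^{\rank T}$ choices $u \in V_{n-1}$. Next consider $c = 1$, where the algebraic multiplicity of $1$ in $\chi_S$ is $\srk{T}+1$. Writing $S - \id = \left(\begin{smallmatrix} T - \id & u \\ 0 & 0\end{smallmatrix}\right)$ and analyzing the equation $(T-\id)v + tu = 0$, one finds the $1$-eigenspace of $S$ has dimension $\srk{T}+1$ if $u \in \im(T-\id) = Y$ and only $\srk{T}$ otherwise. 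Hence $S$ is semi-idempotent precisely for the $q^{\,n-1-\srk{T}}$ choices $u \in Y$, and then $\srk{S} = \srk{T}+1$; since the last column of such an $S$ lies outside $V_{n-1}$, we get $\rank S = \rank T + 1$.

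It remains to assemble, imposing $\rank S \le r$ and invoking $\mu(\srk{T}+1) = -q^{\srk{T}}\mu(\srk{T})$ (from $\binom{j+1}{2} = \binom{j}{2} + j$). The $c=1$ family contributes $q^{\,n-1-\srk{T}}\mu(\srk{T}+1) f(\rank T+1) = -q^{n-1}\mu(\srk{T}) f(\rank T+1)$, which cancels the $q^{n-1}$-term of the $c=0$, $u \notin \im T$ contribution, leaving $-q^{\rank T}\mu(\srk{T}) f(\rank T+1)$ from the total rank-$(\rank T+1)$ part together with $q^{\rank T}\mu(\srk{T}) f(\rank T)$ from the rank-$\rank T$ part; this is formula (a). When $\rank T = r$, both the $c=1$ family and the $c=0$, $u \notin \im T$ family have rank $r+1$ and so are excluded by the constraint $\rank S \le r$, leaving only $q^{\rank T}\mu(\srk{T})f(\rank T)$, which is (b).

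The one place demanding genuine care is the $c=1$ analysis — pinning down the exact semi-idempotency condition $u \in \im(T-\id)$ and verifying it forces $\srk{S} = \srk{T}+1$ and $\rank S = \rank T+1$ — together with the bookkeeping that makes the $q^{n-1}$ contributions cancel. As a consistency check on the counts, $q^{n-1} = e(T,\srk{T}) = q^{\srk{T}} e(T, \srk{T}+1) = q^{\srk{T}} \cdot q^{\,n-1-\srk{T}}$ agrees with Lemma~\ref{lem:coordinate-free-extension-of-corank-1-partial-semi-idempotent}(\ref{item:identity-for-number-of-matrices-with-given-rank-at-infinity}).
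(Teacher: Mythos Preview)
Your proof is correct and follows essentially the same strategy as the paper: both arguments enumerate extensions $S$ by the image of $b_n$, split into the same three families (rank preserved; rank up with stable rank preserved; rank up with stable rank up), obtain the counts $q^{\rank T}$, $q^{n-1}-q^{\rank T}$, $q^{n-1-\srk T}$, and cancel via $\mu(\srk T+1)=-q^{\srk T}\mu(\srk T)$. Your organization by the scalar $c$ and the characterization of the $c=1$ semi-idempotency condition as $u\in\im(T-\id)=Y$ is a clean repackaging of the paper's explicit fixed-vector construction, but the content is the same.
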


\begin{proof}
Suppose that $\rank T \le r-1$. Then $S$ in the sum must satisfy $\rank S = \rank T$ or $\rank S = \rank T + 1$, and in the latter case $S$ must satisfy $\srk S = \srk T$ or $\srk S = \srk T + 1$. We analyze the various cases, noting that $S$ is completely determined by $Sb_n$.
\begin{itemize}
\item Suppose $\rank S = \rank T$. Then, by characteristic polynomial considerations, $\srk{T} = \srk{S}$. Since $S$ has the same image as $T$, we see that $S b_n$ must belong $\im T$. Moreover, any such choice for $Sb_n$ yields an $S$ as in the sum. Thus there are $\left|\im T\right| = q^{\rank T}$ options for such $S$.
\item Suppose $\rank S = \rank T + 1$ and $\srk S = \srk T$. Looking at characteristic polynomials, we see that the final coordinate of $S b_n$ must vanish. Provided that we choose $Sb_n$ so that it is outside the image of $T$ and satisfies this condition, we obtain an $S$ as in the sum. We thus obtain $q^{n-1} - q^{\rank T}$ possibilities in this case.
\item Finally suppose $\rank S = \rank T + 1$ and $\srk S = \srk T+1$. Choosing a basis $u_1,\dots,u_{n-1}$ for $V_{n-1}$ such that its first $\srk T$ elements are fixed vectors of $T$ and setting $u_n = b_n$, we have that there are $q^{n-1-\srk{T}}$ options for $S b_n$. Explicitly, every choice of scalars $\underline{\alpha} = \left(\alpha_1, \dots, \alpha_{n-1-\srk T}\right) \in \bF^{n-1-\srk T}$ corresponds to a linear map $S_{\underline{\alpha}}$ extending $T$ such that $S_{\underline{\alpha}} v_{\underline{\alpha}} = v_{\underline{\alpha}}$, where \begin{displaymath}
	v_{\underline{\alpha}} = b_n + \sum_{i=1}^{n-\srk T - 1} \alpha_i u_{i + \srk T}
\end{displaymath} and these are all the possible extensions $S$ of $T$ with $\srk S = \srk T + 1$. The proof of this is similar to the argument at the beginning of the proof of Lemma \ref{lem:extension-of-semi-idempotent-jordan-matrix}.
\end{itemize}
In all cases, the element $S$ we constructed is semi-idempotent. To see this, first recall that since $T$ is semi-idempotent, the space $V_{n-1}$ is a direct sum of the generalized $0$-eigenspace of $T$ and the $1$-eigenspace of $T$. Then, by our construction, the space $V_n$ is the direct sum of $V_{n-1}$ and the linear span of either a generalized $0$-eigenvector (in the first two cases) or a fixed vector (in the third case) of $S$.

Combining the above computations, we obtain
\begin{equation*}
\begin{split}
E_n\left(f,T\right) =& \left(-1\right)^{\srk{T}} q^{\binom{\srk{T}}{2}} \left(q^{\rank T} \cdot f\left(\rank T\right) + \left(q^{n-1} - q^{\rank T}\right) f\left(\rank T + 1\right)\right)\\
& + q^{n-1-\srk{T}} \cdot \left(-1\right)^{\srk{T} + 1} q^{\binom{\srk{T} + 1}{2}} f\left(\rank T + 1\right),
\end{split}
\end{equation*}
which equals
\begin{displaymath}
E_n\left(f, T\right) = \left(-1\right)^{\srk{T}} q^{\binom{\srk{T}}{2}} q^{\rank T} \left(f\left(\rank T\right) - f\left(\rank T + 1\right)\right)
\end{displaymath}
thanks to the identity $-\srk{T} + \binom{\srk{T} + 1}{2} = \binom{\srk{T}}{2}$.

The case $\rank T = r$ is simpler since we only have the first option $\rank S = \rank T = r$, and thus we get the desired result.
\end{proof}

\subsection{The key result}

Given a semi-idempotent element $T_r \colon V_r \to V_n$ and $0 \le \ell \le n-r$ and $0 \le j \le r$, put
\begin{displaymath}
h_{\ell}(j,T_r) = q^{\ell j} \sum_{\substack{S \colon V_{n-\ell} \to V_{n-\ell}\\
S \text{ semi-idempotent}\\	\rank S \le j\\
S \restriction_{V_r} = T_r}} \mu(\srk{S}) \qbinom{n-\ell-\rank S}{j-\rank S}.
\end{displaymath}
The following is the key result used in our determination of the unit.

\begin{proposition} \label{prop:hk-independent-of-k}
The quantity $h_{\ell}(j,T_r)$ is independent of $\ell$.
\end{proposition}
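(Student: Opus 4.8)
The plan is to show that $h_{\ell}(j,T_r) = h_{\ell+1}(j,T_r)$ for every $\ell$ with $0 \le \ell \le n-r-1$, which gives the proposition by chaining these equalities. Write $N = n-\ell$, so $r+1 \le N \le n$ and hence $V_r \subseteq V_{N-1}$. Since $h_{\ell}(j,T_r) = q^{\ell j}\,\Sigma_N$, where
\[
\Sigma_N = \sum_{\substack{S \colon V_N \to V_N\\ S \text{ semi-idempotent}\\ \rank S \le j,\ S\restriction_{V_r} = T_r}} \mu(\srk S)\,\qbinom{N-\rank S}{j-\rank S},
\]
the desired equality is equivalent to $\Sigma_N = q^{j}\,\Sigma_{N-1}$, and I would prove this by grouping the terms of $\Sigma_N$ according to the restriction $T := S\restriction_{V_{N-1}}$, a partial linear map $V_{N-1}\to V_N$ with $T\restriction_{V_r} = T_r$.

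The first step is to check that each such $T$ is semi-idempotent, so that the reduction results of \S\ref{ss:some-semi-idempotent-sums} apply to it. Since $S$ is semi-idempotent it acts as the identity on $X := \im S^{\infty}$, and $\im S^{M} = X$ for $M \gg 0$. Directly from the definitions in \S\ref{s:semi} one has $\im T^{M} \subseteq \im S^{M}$ for all $M$; and for $M$ large, if $w \in \im T^{M+1}$ then writing $w = T(u)$ with $u = T^{M}v \in \dom T = V_{N-1}$, we have $u \in \im S^{M} = X$, so $w = S(u) = u \in X \cap V_{N-1}$ and $T(w) = S(w) = w$. Hence $T$ acts trivially on $\im T^{M+1}$, i.e.\ $T$ is semi-idempotent.

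Then I would split the relevant $T$'s into two types. If $\im T \not\subseteq V_{N-1}$, Proposition~\ref{prop:vanishing-identity-of-coefficient-of-semi-idempotent} --- applied with $N$ in place of $n$, with $j$ as the rank bound, and with $f(i) = \qbinom{N-i}{j-i}$ --- shows that the sum of $\mu(\srk S)\qbinom{N-\rank S}{j-\rank S}$ over all semi-idempotent $S\colon V_N\to V_N$ with $\rank S\le j$ extending $T$ is zero, so these $T$ contribute nothing to $\Sigma_N$. (In particular, if $\im T_r\not\subseteq V_{N-1}$ then every $T$ is of this type and likewise $\Sigma_{N-1}=0$, so the identity holds trivially.) If instead $\im T \subseteq V_{N-1}$, then $T$ is a semi-idempotent endomorphism of $V_{N-1}$ with $\rank T \le j$ and $T\restriction_{V_r} = T_r$ --- and conversely every such $T$ occurs, e.g.\ from $S = \diag(T,0)$ --- and Proposition~\ref{prop:reduction-of-unit-inner-sum} (same substitutions) evaluates the total contribution of the extensions of $T$ as
\[
\mu(\srk T)\,q^{\rank T}\Bigl(\qbinom{N-\rank T}{j-\rank T}-\qbinom{N-1-\rank T}{j-1-\rank T}\Bigr)
\]
if $\rank T < j$, and as $\mu(\srk T)\,q^{\rank T}\qbinom{N-\rank T}{j-\rank T}$ if $\rank T = j$. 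The $q$-Pascal identity of \S\ref{ss:qbinom} collapses the first expression to $q^{j}\,\mu(\srk T)\,\qbinom{N-1-\rank T}{j-\rank T}$, and the second equals $q^{j}\,\mu(\srk T)\,\qbinom{N-1-\rank T}{j-\rank T}$ as well (both $q$-binomials being $1$ here). Summing over these $T$ gives exactly $q^{j}\,\Sigma_{N-1}$, completing the step.

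The main thing needing care is the semi-idempotence of the restriction $T = S\restriction_{V_{N-1}}$ used in the second paragraph, since the two reduction propositions are stated only for semi-idempotent inputs; everything else is bookkeeping plus a single appeal to $q$-Pascal. (Alternatively, this point can be extracted from the partial-map formalism of \cite[\S5.6]{Steinberg} already invoked in the proof of Lemma~\ref{lem:coordinate-free-extension-of-corank-1-partial-semi-idempotent}.)
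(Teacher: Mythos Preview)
Your proof is correct and follows essentially the same approach as the paper's: group the sum defining $h_{\ell}$ (respectively $h_{\ell-1}$ in the paper's indexing) by the restriction $T=S\restriction_{V_{N-1}}$, discard the terms with $\im T \not\subseteq V_{N-1}$ via Proposition~\ref{prop:vanishing-identity-of-coefficient-of-semi-idempotent}, evaluate the remaining fibers with Proposition~\ref{prop:reduction-of-unit-inner-sum}, and simplify using $q$-Pascal. The only difference is that you supply the (easy but worthwhile) verification that the restriction $T$ is again semi-idempotent, which the paper leaves implicit.
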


\begin{proof}
Let us rewrite $h_{\ell-1}\left(j, T_r\right)$ as
\begin{displaymath}
h_{\ell-1}\left(j,T_r\right) = q^{\left(\ell-1\right)j} \sum_{\substack{T \colon V_{n-\ell} \to V_{n-\ell+1}\\
T \text{ semi-idempotent}\\
\rank T \le j\\
T \restriction_{V_r} = T_r}} \sum_{\substack{S \colon V_{n-\ell+1} \to V_{n-\ell+1}\\
S \text{ semi-idempotent}\\
\rank S \le j\\
S \restriction_{V_{n-\ell}} = T}} \mu(\srk{S}) \qbinom{n-\ell+1-\rank S}{j-\rank S}.
\end{displaymath}
By Proposition~\ref{prop:vanishing-identity-of-coefficient-of-semi-idempotent}, the sum over $S$ vanishes unless $\im T \subset V_{n-\ell}$. Thus we may replace $T$ above with $T \colon V_{n-\ell} \to V_{n-\ell}$. By applying Proposition \ref{prop:reduction-of-unit-inner-sum} to this modified version of the equation, we obtain \begin{displaymath}
\begin{split}
&q^{\left(\ell-1\right)j} \sum_{\substack{T \colon V_{n-\ell} \to V_{n-\ell}\\
	T \text{ semi-idempotent}\\
	\rank T \le j-1\\
	T \restriction_{V_r} = T_r}} \mu(\srk{T}) q^{\rank T} \left( \qbinom{n-\ell+1-\rank T}{j-\rank T} - \qbinom{n-\ell-\rank T}{j-\rank T - 1} \right)\\
&+ q^{\left(\ell-1\right)j} \sum_{\substack{T \colon V_{n-\ell} \to V_{n-\ell}\\
	T \text{ semi-idempotent}\\
	\rank T = j\\
	T \restriction_{V_r} = T_r}} \mu(\srk{T}) q^{\rank T} .
\end{split}
\end{displaymath}
which by the $q$-binomial Pascal identity equals 
\begin{displaymath}
q^{\left(\ell-1\right)j} \sum_{\substack{T \colon V_{n-\ell} \to V_{n-\ell}\\
	T \text{ semi-idempotent}\\
	\rank T \le j \\
	T \restriction_{V_r} = T_r}} \mu(\srk{T}) q^{\rank T} q^{j-\rank T} \qbinom{n-\ell-\rank T}{j-\rank T} = h_{\ell}\left(j,T_r\right).
\end{displaymath}
The result thus follows by induction.
\end{proof}

\section{The unit formula} \label{s:unit}

In this section, we prove our formula for the unit of $k[\fM_n]$, and some related results. We fix $n$ throughout \S \ref{s:unit}.

\subsection{The main theorem}

Define an element of $k[\fM_n]$ by
\begin{displaymath}
\eta_r = q^{-\left(n-1\right)r} \mu(r) \sum_m \mu(\srk{m}) \qbinom{n-1-\rank m}{r - \rank m} [m],
\end{displaymath}
where the sum is over all semi-idempotent matrices $m$ of rank $\le r$. Recall that
\begin{displaymath}
\mu(r)=(-1)^r q^{\binom{r}{2}},
\end{displaymath}
and $\srk{m}$ is the stable rank of $m$, i.e., the rank of $m^r$ for all $r \gg 0$. The following is our main result on the unit of $\fa_{n,r}$:

\begin{theorem} \label{thm:unit}
The element $\eta_r$ is the unit of $\fa_{n,r}$, for any $0 \le r \le n-1$.
\end{theorem}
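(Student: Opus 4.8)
The plan is to prove that $\psi_s(\eta_r)=1$ for $0\le s\le r$ and $\psi_s(\eta_r)=0$ for $r<s\le n$, where $\psi_s$ is the homomorphism of Proposition~\ref{prop:structure}. Since $\eta_r$ is visibly a linear combination of classes $[m]$ with $\rank m\le r$ it lies in $\fa_{n,r}$, and Proposition~\ref{prop:structure} identifies $\psi$ on $\fa_{n,r}$ with an algebra isomorphism onto $\prod_{s=0}^{r}k[\cG_{n,s}]$; hence these identities will force $\eta_r$ to be the preimage of $(1,\dots,1,0,\dots,0)$, namely the unit. Expanding $\psi_s([m])=\sum_U[m\restriction_U]$ (sum over $s$-dimensional $U$ with $\dim m(U)=s$) and reading off the coefficient of a basis vector $[\phi]$ of $k[\cG_{n,s}]$ — here $\phi\colon U\to W$ is an isomorphism of $s$-dimensional subspaces of $\bF^n$ — the claim becomes that
\begin{displaymath}
q^{-(n-1)r}\mu(r)\sum_{\substack{m\ \text{semi-idempotent}\\ \rank m\le r,\ m\restriction_U=\phi}}\mu(\srk m)\qbinom{n-1-\rank m}{r-\rank m}
\end{displaymath}
equals $1$ when $s\le r$ and $\phi=\id_U$ (so $W=U$), and equals $0$ otherwise.

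The case $s>r$ is immediate: such an $m$ has $\rank m\ge\dim W=s>r$, so the sum is empty. Suppose now $s\le r$ and $\phi\ne\id_W$. If $U=W$ there is no semi-idempotent $m$ with $m\restriction_U=\phi$ at all, because an $m$ with $m(U)=U$ acts on $U$ — which lies in the $1$-eigenspace $\im m^{\infty}$ — by the identity, forcing $\phi=\id_U$. If $U\ne W$, choose a hyperplane $H$ with $U\subseteq H$ but $W\not\subseteq H$ (possible since $s\le n-1$ and the intersection of all hyperplanes through $U$ is $U$), group the sum over $m$ according to $T:=m\restriction_H$ — a semi-idempotent partial linear map, using that restrictions of semi-idempotent operators to subspaces are semi-idempotent — and note $\im T\supseteq\phi(U)=W\not\subseteq H$. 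Proposition~\ref{prop:vanishing-identity-of-coefficient-of-semi-idempotent}, applied with the hyperplane $H$ and $f(j)=\qbinom{n-1-j}{r-j}$, then makes each inner sum vanish, so the coefficient is $0$.

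The essential case is $s\le r$ with $\phi=\id_W$. Using that $\eta_r$ is fixed by $\fG_n$-conjugation and $\psi_s$ is $\fG_n$-equivariant, reduce to $W=V_s$; then the coefficient of $[\id_{V_s}]$ is $q^{-(n-1)r}\mu(r)D_n$, where
\begin{displaymath}
D_n:=\sum_{\substack{m\colon V_n\to V_n\ \text{semi-idempotent}\\ \rank m\le r,\ m\restriction_{V_s}=\id_{V_s}}}\mu(\srk m)\qbinom{n-1-\rank m}{r-\rank m},
\end{displaymath}
and I would compute $D_n$ by the inductive mechanism behind Proposition~\ref{prop:hk-independent-of-k}. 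Write $D_N$ for the same expression with $n$ replaced by $N$. Splitting off the last coordinate, Proposition~\ref{prop:vanishing-identity-of-coefficient-of-semi-idempotent} discards the extensions $m$ whose restriction to $V_{N-1}$ has image outside $V_{N-1}$, and Proposition~\ref{prop:reduction-of-unit-inner-sum} evaluates each remaining inner sum over semi-idempotent endomorphisms $T$ of $V_{N-1}$ of rank $\le r$ fixing $V_s$; the $q$-Pascal identity then collapses the two cases of Proposition~\ref{prop:reduction-of-unit-inner-sum} into the single expression $\mu(\srk T)\,q^{r}\qbinom{N-2-\rank T}{r-\rank T}$, which is $q^{r}$ times the $T$-summand defining $D_{N-1}$, giving the recursion $D_N=q^{r}D_{N-1}$ for $N\ge r+1$. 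Iterating down, the base value $D_{r+1}$ is obtained by the same two propositions with $f\equiv1$, where only the term $T=\id_{V_r}$ survives and yields $D_{r+1}=q^{r}\mu(r)$. Therefore $D_n=q^{(n-r-1)r}D_{r+1}=\mu(r)\,q^{(n-r)r}$, and the coefficient of $[\id_{V_s}]$ is $q^{-(n-1)r}\mu(r)^2q^{(n-r)r}=\mu(r)^2q^{r-r^2}=1$ since $\mu(r)^2=q^{r^2-r}$. Assembling the cases gives $\psi_s(\eta_r)=\sum_{\dim W=s}[\id_W]=1$ for $s\le r$ and $\psi_s(\eta_r)=0$ for $s>r$, as required.

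The main obstacle is this last step: arranging the dimension reduction so that the $q$-binomial coefficients telescope correctly through Proposition~\ref{prop:reduction-of-unit-inner-sum} and the $q$-Pascal identity, keeping careful track of the ranges of validity, and correctly pinning down the base case $D_{r+1}$. One also needs the auxiliary fact, invoked twice above, that a restriction of a semi-idempotent operator to a subspace is again a semi-idempotent partial map.
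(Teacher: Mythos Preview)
Your argument is correct. The core recursion you run—Propositions~\ref{prop:vanishing-identity-of-coefficient-of-semi-idempotent} and~\ref{prop:reduction-of-unit-inner-sum} followed by $q$-Pascal—is exactly the mechanism behind Proposition~\ref{prop:hk-independent-of-k}, and your base-case computation of $D_{r+1}$ is clean.

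The route, however, differs from the paper's. The paper argues via Kuhn's criterion (Lemma~\ref{lem:kuhn-criterion-for-unit}): once $\eta_r$ is seen to be conjugation-invariant, it suffices to check the single identity $\eta_r\cdot[e_{n,r}]=[e_{n,r}]$. To evaluate this product, the paper first rewrites $\eta_r$ via the telescoping Lemma~\ref{lem:eta-alt}, applies Proposition~\ref{prop:hk-independent-of-k} to pass from $h_0$ to $h_{n-r}$, and then finishes with the $q$-binomial identity of Lemma~\ref{lem:sum-identity}. You instead compute $\psi_s(\eta_r)$ directly for every $s$, reading off the coefficient of each basis morphism $[\phi\colon U\to W]$ in $k[\cG_{n,s}]$. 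This bypasses Kuhn's criterion, the alternate formula, and Lemma~\ref{lem:sum-identity} entirely; the price is that you must separately dispose of the non-identity morphisms, which you handle with the nice observation that a semi-idempotent operator stabilising $U$ must fix it, together with a hyperplane trick invoking Proposition~\ref{prop:vanishing-identity-of-coefficient-of-semi-idempotent} when $U\ne W$. Your approach is closer in spirit to the paper's Theorem~\ref{thm:unit-decomp}, which pins down $\psi_r$ on the finer idempotents $\eta_W$; indeed, your computation essentially establishes $\psi_s(\eta_r)=\sum_{\dim W=s}[\id_W]$ as a byproduct. Each approach has its merits: the paper's is shorter once Kuhn's criterion is in hand, while yours gives more direct information about how $\eta_r$ sits under the structure isomorphism $\psi$.
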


The general proof is given in \S \ref{ss:proof-unit}. When $r=n-1$, things simplify substantially; we give a self-contained proof of that case in \S \ref{ss:proof-unit-1}. We also prove another result about $\eta_r$, which we state here. For a subspace $W$ of $V=\bF^n$, define an element of $k[\fM_n]$ by
\begin{displaymath}
\eta_W = q^{-\dim W \dim\left(V \slash W\right)} \mu(\dim{W})^{-1}
\sum_{\substack{T \colon V \to V\\
		T \text{ semi-idempotent}\\
		\im T \subset W}} \mu(\srk{T}) \left[T\right].
\end{displaymath}
Recall the map $\psi_r \colon k[\fM_n] \to k[\cG_{n,r}]$ from \S \ref{ss:kovacs}.

\begin{theorem} \label{thm:unit-decomp}
The $\eta_W$ are orthogonal idempotents of $k[\fM_n]$, and
\begin{displaymath}
\eta_r = \sum_{\dim{W} \le r} \eta_W.
\end{displaymath}
Moreover, we have $\psi_r(\eta_W)=[\id_W]$ if $\dim{W}=r$, and $\psi_r(\eta_W)=0$ if $\dim{W}<r$.
\end{theorem}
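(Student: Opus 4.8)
The plan is to compute $\psi_s(\eta_W)$ for every $0 \le s \le n$ and show it equals $[\id_W]$ when $s = \dim W$ and vanishes otherwise; the remaining assertions then follow by formal arguments. Write $d = \dim W$ and fix a complement $W'$ of $W$ in $V = \bF^n$. The first step is a convenient parametrization of the matrices occurring in $\eta_W$: relative to $V = W \oplus W'$, a linear map $T \colon V \to V$ with $\im T \subseteq W$ is block-triangular, $T = \left(\begin{smallmatrix} g & h \\ 0 & 0 \end{smallmatrix}\right)$ with $g = T\restriction_W \colon W \to W$ and $h = T\restriction_{W'} \colon W' \to W$; one checks that $T$ is semi-idempotent iff $g$ is (the nonzero eigenvalues of $T$ are those of $g$, and their $1$-eigenspaces coincide, while $\det(xI-T)=\chi_g(x)x^{n-d}$), that $\srk T = \srk g$ (for $N \gg 0$ one has $T^N = \left(\begin{smallmatrix} g^N & g^{N-1}h \\ 0 & 0\end{smallmatrix}\right)$ and $\im(g^{N-1}h) \subseteq \im g^{N-1} = \im g^N$), and that $h$ is otherwise free. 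Hence $\eta_W = q^{-d(n-d)}\mu(d)^{-1}\sum_{g,h}\mu(\srk g)[T_{g,h}]$, the sum over semi-idempotent $g \colon W \to W$ and all $h \colon W' \to W$.

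The technical core is the following vanishing statement. \emph{Let $X$ be a finite-dimensional $\bF$-vector space, $Y \subsetneq X$ a proper subspace, and $\phi_0 \colon Y \to X$ a partial linear map. Then $\sum_g \mu(\srk g) = 0$, the sum over all semi-idempotent $g \colon X \to X$ with $g\restriction_Y = \phi_0$.} To prove this I would pick $Z$ with $Y \subseteq Z \subsetneq X$ and $\dim Z = \dim X - 1$, and group the maps $g$ in the sum according to $\psi := g\restriction_Z \colon Z \to X$. For each such partial linear $\psi$, a semi-idempotent $g$ restricting to $\psi$ has $\srk g \in \{\srk\psi, \srk\psi + 1\}$ (as in the proof of Lemma~\ref{lem:coordinate-free-extension-of-corank-1-partial-semi-idempotent}), so the corresponding partial sum is $e(\psi, \ell)\mu(\ell) + e(\psi, \ell + 1)\mu(\ell + 1)$ with $\ell = \srk\psi$ and $e(\psi,\cdot)$ as in that lemma. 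Part~(a) of the lemma gives $e(\psi, \ell) = q^\ell e(\psi, \ell + 1)$, and since $q^\ell\mu(\ell) + \mu(\ell + 1) = (-1)^\ell\left(q^{\ell + \binom{\ell}{2}} - q^{\binom{\ell+1}{2}}\right) = 0$, every partial sum, and hence the whole sum, vanishes. (Note this needs no hypothesis on $\im\phi_0$, since the $f$-factor of Proposition~\ref{prop:vanishing-identity-of-coefficient-of-semi-idempotent} is not present here.)

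With these ingredients I would compute $\psi_s(\eta_W)$ coefficientwise. For an isomorphism $f \colon U \to U'$ of $s$-dimensional subspaces, the coefficient of $[f]$ in $\psi_s(\eta_W)$ is $q^{-d(n-d)}\mu(d)^{-1}$ times $\sum \mu(\srk T)$, the sum over semi-idempotent $T \colon V \to V$ with $\im T \subseteq W$ and $T\restriction_U = f$ (so necessarily $U' \subseteq W$). In the parametrization, the condition $T\restriction_U = f$ forces $g\restriction_{U \cap W} = f\restriction_{U \cap W}$, imposes nothing further on $g$ beyond semi-idempotency, and pins down $h$ on the image of $U$ in $V/W$ while leaving it free elsewhere; collecting powers of $q$, the coefficient of $[f]$ becomes $q^{-d c}\mu(d)^{-1}\sum\mu(\srk g)$, where $c = \dim\bigl(U/(U\cap W)\bigr)$ and the sum runs over semi-idempotent $g \colon W \to W$ with $g\restriction_{U\cap W} = f\restriction_{U \cap W}$. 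Whenever $U \cap W \subsetneq W$ this vanishes by the lemma. The only way to have $U \cap W = W$ is $W \subseteq U$, which together with $\dim T(U) = s \le d$ forces $s = d$ and $U = U' = W$; there the $g$-sum is over semi-idempotent $g = f$, and since $\dim T(W) = d$ forces $g$ invertible, necessarily $f = \id_W$ (an invertible semi-idempotent operator is the identity), so the coefficient of $[\id_W]$ is $\mu(d)^{-1}\mu(d) = 1$. Combined with the trivial vanishing $\psi_s(\eta_W) = 0$ for $s > d$ (no $s$-dimensional subspace lies in $W$, yet $\im T \subseteq W$), this gives $\psi_s(\eta_W) = \delta_{s,d}[\id_W]$.

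Finally the formal consequences. Since $\psi = (\psi_0, \dots, \psi_n)$ is an algebra isomorphism (Proposition~\ref{prop:structure}) and $[\id_W][\id_{W'}] = \delta_{W,W'}[\id_W]$ in the groupoid algebra, applying $\psi_s$ to $\eta_W\eta_{W'}$ yields $\eta_W\eta_{W'} = \delta_{W,W'}\eta_W$, so the $\eta_W$ are orthogonal idempotents. For the decomposition, $\psi_s\bigl(\sum_{\dim W \le r}\eta_W\bigr) = \sum_{\dim W = s}[\id_W]$ for $s \le r$ and $0$ for $s > r$; on the other hand, by Theorem~\ref{thm:unit} the element $\eta_r$ is the unit of $\fa_{n,r}$, so (as in the proof of Proposition~\ref{prop:structure}) $\psi_s(\eta_r)$ is the unit of $k[\cG_{n,s}]$, namely $\sum_{\dim W = s}[\id_W]$, for $s \le r$, and is $0$ for $s > r$. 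These agree for every $s$, and since $\psi$ is injective, $\eta_r = \sum_{\dim W \le r}\eta_W$. I expect the main difficulty to be not any single conceptual point but the bookkeeping in the coefficient computation of $\psi_s(\eta_W)$ — checking that the powers of $q$ coming from the freedom in $h$ cancel exactly and that the exceptional coefficient is $1$ on the nose — since the vanishing lemma itself reduces cleanly to the codimension-one count already established in Section~\ref{s:semi}.
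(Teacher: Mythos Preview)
Your proof is correct and takes a genuinely different route from the paper's. You compute $\psi_s(\eta_W)$ for \emph{every} $s$ directly, via a clean vanishing lemma (itself an immediate corollary of Lemma~\ref{lem:coordinate-free-extension-of-corank-1-partial-semi-idempotent}(a)), and then read off idempotency, orthogonality, and the decomposition formally from the isomorphism $\psi$ of Proposition~\ref{prop:structure}. The paper instead works inside $k[\fM_n]$: it introduces auxiliary idempotents $\cE_{W,U}$ and $\cE^*_{W,U}$, proves the relations $\eta_W\cE_{W,U}=\cE_{W,U}$ and $\cE_{W,U}\eta_W=\eta_W$ (Proposition~\ref{prop:eta-epsilon-identities}), and deduces idempotency from these; orthogonality is shown by a separate direct computation (Proposition~\ref{prop:orthogonality-of-eta}), the decomposition $\eta_r=\sum_{\dim W\le r}\eta_W$ is verified coefficientwise from Lemma~\ref{lem:eta-alt}, and only then is $\psi_r(\eta_W)$ computed using the link to $\cE_{W,U}$. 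Your approach is more economical and makes maximal use of the structure theorem; the paper's approach, while longer, produces the elements $\cE_{W,U}$ and $\cE'_W$ of Remark~\ref{rmk:solomon} as a byproduct, and its derivation of $\eta_r=\sum_W\eta_W$ does not invoke Theorem~\ref{thm:unit}. One small remark: when you invoke Lemma~\ref{lem:coordinate-free-extension-of-corank-1-partial-semi-idempotent}(a) for a given $\psi=g\restriction_Z$, you are implicitly using that $\psi$ is a semi-idempotent partial map (which holds because it is the restriction of a semi-idempotent $g$); this is harmless but worth stating, since the lemma's proof uses that hypothesis even though its statement does not flag it.
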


The proof of this theorem is given in \S \ref{ss:unit-decomp}. Using the $\eta_W$'s, one can find an explicit inverse image of any element of $k[\cG_{n,r}]$ under $\psi_r$. Indeed, suppose $f \colon W \to W'$ is an isomorphism of $r$-dimensional subspaces of $\bF^n$, so that $[f]$ is a typical basis vector of $k[\cG_{n,r}]$. Let $m \in \fM_n$ be an arbitrary extension of $f$ to an endomorphism of $\bF^n$. Then $[f]=\psi_r([m] \eta_W)$.

\subsection{Proof of Theorem~\ref{thm:unit} in corank~1} \label{ss:proof-unit-1}

When $r=n-1$, the formula for $\eta_r$ simplifies somewhat:
\begin{displaymath}
	\eta_{n-1} = -\mu(n)^{-1} \sum \mu(\srk{m}) [m],
\end{displaymath}
where the sum is over singular semi-idempotent matrices $m \in \fM_n$. The proof of Theorem~\ref{thm:unit} is also fairly simple in this case, as we now explain. By \cite[Lemma 3.1]{Kuhn2} (see also Lemma~\ref{lem:kuhn-criterion-for-unit} below) it suffices to check that $\eta_{n-1} \left[e_{n,n-1}\right] = \left[e_{n,n-1}\right]$, where $e_{n,n-1} = \left(\begin{smallmatrix}
	I_{n-1}\\
	& 0
\end{smallmatrix}\right)$. Using the notation of Lemma~\ref{lem:coordinate-free-extension-of-corank-1-partial-semi-idempotent} and \S\ref{ss:some-semi-idempotent-sums}, for a linear map $T \colon V_{n} \to V_n$ such that $T b_n = 0$ and such that $T \restriction_{V_{n-1}}$ is semi-idempotent, the coefficient of $\left[T\right]$ in the linear combination $\eta_{n-1} \left[e_{n,n-1}\right]$ is
\begin{displaymath}
	-\mu\left(n\right)^{-1} \left(e\left(T \restriction_{V_{n-1}}, \srk T\right) \mu\left(\srk T\right) + e\left(T \restriction_{V_{n-1}}, \srk T + 1\right) \mu\left(\srk T + 1\right)\right)
\end{displaymath}
if $T \ne e_{n,n-1}$ and is $-\mu\left(n\right)^{-1} q^{n-1} \mu\left(n - 1\right)$ if $T = e_{n-1,n}$. By Lemma~\ref{lem:coordinate-free-extension-of-corank-1-partial-semi-idempotent}, the coefficient above is zero if $T \ne e_{n,n-1}$ because $\mu\left(\srk T + 1\right) = -q^{\srk T} \mu\left(\srk T\right)$. If $T = e_{n-1,n}$, then the coefficient of $\left[T\right]$ is $1$ because $\mu\left(n\right)^{-1} \mu\left(n-1\right) = -q^{-\left(n-1\right)}$.

\subsection{Proof of Theorem~\ref{thm:unit}} \label{ss:proof-unit}

We require a few lemmas before proving the theorem. We begin by providing an alternate formula for $\eta_r$.

\begin{lemma} \label{lem:eta-alt}
We have
\begin{displaymath}
\eta_r = \sum_{j=0}^r q^{-\left(n-j\right)j} \mu(j)^{-1} \sum_{\substack{m \in \fM_n \\
	\rank m \le j\\
	m \text{ semi-idempotent}}} \mu(\srk{m}) \qbinom{n - \rank m }{j - \rank m} [m].
\end{displaymath}
\end{lemma}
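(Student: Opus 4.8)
The plan is to show the two expressions for $\eta_r$ have the same coefficient on $[m]$ for every semi-idempotent matrix $m \in \fM_n$; since both sides are supported on semi-idempotents, this suffices. Fix such an $m$ and write $\rho = \rank m$. In the claimed formula the $j$-th inner sum contains $m$ precisely when $\rho \le j \le r$, so $m$ contributes exactly when $\rho \le r$, as in the defining formula for $\eta_r$, and the scalar $\mu(\srk m)$ is a common factor of every term involving $m$ on both sides. Cancelling it reduces Lemma~\ref{lem:eta-alt} to the identity of polynomials in $q$
\[
(-1)^r q^{\binom{r+1}{2}-nr}\qbinom{n-1-\rho}{r-\rho} \;=\; \sum_{j=\rho}^{r} (-1)^j q^{\binom{j+1}{2}-nj}\qbinom{n-\rho}{j-\rho},
\]
valid for all $0 \le \rho \le r \le n-1$; here I have used $\mu(i) = (-1)^i q^{\binom{i}{2}}$, $\mu(i)^{-1} = (-1)^i q^{-\binom{i}{2}}$, and the simplifications $-(n-1)r + \binom{r}{2} = \binom{r+1}{2} - nr$ and $-(n-j)j - \binom{j}{2} = \binom{j+1}{2} - nj$.

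Next I would normalize. Putting $M = n - \rho$ and $s = r - \rho$ and reindexing the sum by $i = j - \rho$, the expansion $\binom{a+b}{2} = \binom{a}{2} + ab + \binom{b}{2}$ shows that both sides carry the common factor $(-1)^{\rho} q^{\binom{\rho+1}{2}-n\rho}$; dividing it out leaves the clean $q$-binomial summation
\[
\sum_{i=0}^{s} (-1)^i q^{\binom{i+1}{2}-iM}\qbinom{M}{i} \;=\; (-1)^s q^{\binom{s+1}{2}-sM}\qbinom{M-1}{s},
\]
to be proved for $M \ge 1$ and $0 \le s \le M-1$ (our hypotheses force $M = n-\rho \ge 1$ and $s = r-\rho \le n-1-\rho = M-1$). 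This is a $q$-deformation of the elementary alternating-sum identity $\sum_{i=0}^s (-1)^i \binom{M}{i} = (-1)^s \binom{M-1}{s}$.

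Finally I would prove this last identity by induction on $s$, telescoping. The case $s = 0$ is trivial. For the inductive step, writing $f(s)$ for the right-hand side, a short computation gives
\[
f(s) - f(s-1) = (-1)^s q^{\binom{s}{2}-(s-1)M}\Bigl(q^{s-M}\qbinom{M-1}{s} + \qbinom{M-1}{s-1}\Bigr),
\]
and the parenthesised expression equals $q^{s-M}\qbinom{M}{s}$ by the $q$-Pascal identity in the form $\qbinom{M}{s} = q^{M-s}\qbinom{M-1}{s-1} + \qbinom{M-1}{s}$, which follows from the version in \S\ref{ss:qbinom} using $\qbinom{a}{b} = \qbinom{a}{a-b}$. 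Hence $f(s) - f(s-1)$ is exactly the $s$-th summand on the left, and the induction closes. I expect the only genuine difficulty to be the clerical one — carrying the exponents of $q$ and the signs correctly through the reindexing — rather than anything conceptual; as a consistency check, evaluating the $q$-binomial theorem $\prod_{j=0}^{M-1}(1 - q^j x) = \sum_j (-1)^j q^{\binom{j}{2}}\qbinom{M}{j} x^j$ from \S\ref{ss:qbinom} at $x = q^{1-M}$ makes the full ($s = M$) sum vanish, which is compatible with the stated closed form.
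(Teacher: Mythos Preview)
Your proposal is correct and follows essentially the same approach as the paper. The paper's proof simply observes that the $q$-Pascal identity gives $\eta_j - \eta_{j-1}$ equal to the $j$-th summand of the claimed expression and then telescopes; your argument does exactly this at the level of the coefficient of a fixed $[m]$, where your inductive step $f(s)-f(s-1) = (\text{$s$-th summand})$ is precisely the paper's difference formula unpacked. The only difference is that you spell out the exponent bookkeeping explicitly while the paper compresses it into the single phrase ``using the $q$-binomial identity.''
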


\begin{proof}
Using the $q$-binomial identity we see that
\begin{displaymath}
\eta_j - \eta_{j-1} = q^{-(n-j)j} \mu(j)^{-1} \sum_{\substack{m \in \fM_n \\
\rank m \le j\\
m \text{ semi-idempotent}}} \mu(\srk{m}) \qbinom{n - \rank m }{j - \rank m} \left[m\right].
\end{displaymath}
Here we set $\eta_{-1}=0$. Summing over $0 \le j \le r$ gives the stated formula.
\end{proof}

Put
\begin{displaymath}
e_{n,r} = \left(\begin{matrix} I_r \\ & 0_{n-r} \end{matrix}\right),
\end{displaymath}
regarded as an element of $\fM_n$. The next lemma gives a helpful characterization of the unit of $\fa_{n,r}$; it is based on \cite[Lemma 3.1]{Kuhn2}, which in turn is based on \cite{Kovacs}.

\begin{lemma} \label{lem:kuhn-criterion-for-unit}
$u \in \fa_{n,r}$ is a unit if and only if the following two conditions hold:
\begin{enumerate}
\item $\left[g^{-1}\right] \cdot u \cdot \left[g\right] = u$ holds in $k[\fM_n]$, for any $g \in \fG_n$.
\item $u \cdot [e_{n,r}] = [e_{n,r}]$.
\end{enumerate}
\end{lemma}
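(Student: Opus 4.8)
The plan is to characterize units of $\fa_{n,r}$ by transporting the problem to the group ring $k[\fG_r]$ via the isomorphism $\psi_r \colon \fa_{n,r}/\fa_{n,r-1} \xrightarrow{\sim} k[\cG_{n,r}]$ of Proposition~\ref{prop:structure}(b), and using the fact that $\fa_{n,r-1}$ is a nilpotent-free complement in a sense made precise by Kov\'acs' theorem. The key structural input is that $\fa_{n,r}$, being a two-sided ideal that is a unital ring, decomposes (using Proposition~\ref{prop:structure}(c)) as $\prod_{s=0}^r k[\cG_{n,s}]$; an element $u \in \fa_{n,r}$ is a unit precisely when each component $\psi_s(u)$ equals the unit $\sum_U [\id_U]$ of $k[\cG_{n,s}]$, for all $0 \le s \le r$. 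So the task reduces to showing that conditions (a) and (b) together are equivalent to $\psi_s(u) = 1_{k[\cG_{n,s}]}$ for every $s \le r$.

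First I would show necessity: if $u$ is the unit of $\fa_{n,r}$, then since $[g^{-1}], [g] \in \fG_n \subset \fM_n$ satisfy $[g^{-1}][g] = [\id]$ and conjugation by $[g]$ preserves the ideal $\fa_{n,r}$, the element $[g^{-1}] u [g]$ is again a (two-sided) unit of $\fa_{n,r}$; uniqueness of units gives (a). For (b), note $[e_{n,r}] \in \fa_{n,r}$ since $e_{n,r}$ has rank $r$, so $u[e_{n,r}] = [e_{n,r}]$ because $u$ acts as identity on $\fa_{n,r}$. Conversely, for sufficiency, I would use condition (a) to deduce that $\psi_s(u)$ is invariant under the $\fG_n$-conjugation action on $k[\cG_{n,s}]$, and condition (b) to pin down $\psi_s(u)$ on the ``diagonal'' object corresponding to $e_{n,s}$. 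Concretely: $\psi_s([e_{n,r}])$ involves the restriction $e_{n,r} \restriction_U$ for $s$-dimensional $U$ with $\dim e_{n,r}(U) = s$; when $U \subseteq \im e_{n,r} = V_r$ this restriction is $\id_U$. Applying $\psi_s$ to (b) and analyzing the coefficient of $[\id_{V_s}]$ (where $V_s = \Span(b_1,\dots,b_s)$), combined with the $\fG_n$-equivariance from (a)—which acts transitively on $s$-dimensional subspaces and on isomorphisms between them—forces $\psi_s(u) = \sum_U [\id_U]$, the unit of $k[\cG_{n,s}]$. Since this holds for all $s \le r$, we get $\psi(u) = (1, \dots, 1, 0, \dots, 0)$, i.e., $u$ is the unit of $\fa_{n,r}$.

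The main obstacle I anticipate is the sufficiency direction, specifically extracting full information about $\psi_s(u)$ for $s < r$ (not just $s = r$) from the single equation $u[e_{n,r}] = [e_{n,r}]$ together with $\fG_n$-invariance. The point is that $[e_{n,r}]$ is a rank-$r$ element, so $\psi_s([e_{n,r}])$ for $s < r$ is a sum over many $s$-subspaces $U \subseteq V_r$ of $[\id_U]$ (plus contributions from $U \not\subseteq V_r$ that still map isomorphically)—one must check that $\psi_s(u) \cdot \psi_s([e_{n,r}]) = \psi_s([e_{n,r}])$ in $k[\cG_{n,s}]$ forces $\psi_s(u)$ to be the identity, which uses that $\psi_s([e_{n,r}])$ is a sum of identity morphisms whose sources span enough of $\Ob(\cG_{n,s})$; combined with the transitivity of $\fG_n$ on $s$-subspaces via (a), this should close the argument. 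Alternatively, and perhaps more cleanly, one can invoke that a two-sided ideal which is unital has a unique unit, reduce via Proposition~\ref{prop:groupoid} and Proposition~\ref{prop:structure} to the matrix-algebra picture $\fa_{n,r} \cong \prod_{s \le r} \rM_{N(s)}(k[\fG_s])$, and observe that a central element fixing the standard rank-$r$ idempotent and commuting with the $\fG_n$-action (which contains enough permutation and diagonal matrices to detect all matrix units) must be the identity. I would follow Kuhn's original argument in \cite[Lemma 3.1]{Kuhn2} for the bookkeeping, adapting notation to the canonical maps $\psi_s$ used here.
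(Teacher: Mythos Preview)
Your strategy of passing through the isomorphism $\psi$ of Proposition~\ref{prop:structure} can be made to work, but your sketch contains a real error and is far more elaborate than the paper's argument. The claim that $\fG_n$ ``acts transitively on $s$-dimensional subspaces and on isomorphisms between them'' is false for the isomorphism part: conjugation by $g \in \fG_n$ sends $\alpha \colon W \to W'$ to $g\alpha g^{-1} \colon gW \to gW'$, and this can never carry $\id_W$ to a non-identity automorphism. So $\fG_n$-invariance alone does not pin down the coefficients of $\psi_s(u)$ on non-identity morphisms, and ``analyzing the coefficient of $[\id_{V_s}]$'' gives you only the identity coefficients. What does work is a full coefficient comparison in $\psi_s(u)\,\psi_s([e_{n,r}]) = \psi_s([e_{n,r}])$: for each $s$-dimensional $W \subseteq V_r$ this forces $c_\alpha = \delta_{\alpha,\id_W}$ for every $\alpha$ with domain $W$, and then transitivity of $\fG_n$ on $s$-subspaces (not on isomorphisms) propagates this to arbitrary $W$. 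Your approach is salvageable, but the argument you actually wrote down does not close the gap.

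The paper takes a completely different and much shorter route. It never touches $\psi_s$ or the product decomposition. Instead it uses one elementary linear-algebra fact: any $m \in \fM_n$ of rank at most $r$ factors as $m = g\,e_{n,r}\,g^{-1}\,B$ for some $g \in \fG_n$ and $B \in \fM_n$ (choose $g$ with $\im m \subseteq gV_r$). Then
\[
u[m] \;=\; [g]\,u\,[g^{-1}]\,[g e_{n,r} g^{-1} B] \;=\; [g]\,\bigl(u[e_{n,r}]\bigr)\,[g^{-1}B] \;=\; [m],
\]
using (a) then (b). This already shows $u$ is a left identity on $\fa_{n,r}$; the paper then obtains the right-identity property by applying the transpose anti-involution of $k[\fM_n]$, noting that $u$ is fixed by transpose because every matrix is conjugate to its transpose and $u$ is conjugation-invariant. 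Two points worth noting in the comparison: the paper's argument does not presuppose that $\fa_{n,r}$ has a unit at all (so it gives an independent, self-contained criterion), whereas your route takes the full structure theorem---hence Kov\'acs' existence result---as input; and your ``alternative'' via centrality conflates $\fG_n$-conjugation invariance with centrality in $\fa_{n,r}$, which are different conditions.
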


\begin{proof}
The proof is similar to \cite[Lemma 3.1]{Kuhn2}.

Assume that $u \in \fa_r$ is a unit. Since $e_{n,r}$ is of rank $r$ we must have that the second property holds. Regarding the first property, let $g \in \fG_n$. Conjugation by $g$ preserves the set consisting of matrices of rank $\le r$ in $\fM_n$. Hence $u \cdot \left[g m g^{-1}\right] = \left[g m g^{-1}\right]$ for any $m \in \fM_n$ with rank $\le r$. Therefore we have that $\left(\left[g^{-1}\right] \cdot u \cdot \left[g\right]\right) \cdot \left[m\right] = \left[m\right]$ for any $m \in \fM_n$ with rank $\le r$. Similarly, we have that $\left[m\right]  \cdot \left(\left[g^{-1}\right] \cdot u \cdot \left[g\right]\right) = \left[m\right]$ for any $m \in \fM_n$ with rank $\le r$. Thus we have that $\left[g^{-1}\right] \cdot u \cdot \left[g\right]$ is also a unit of $\fa_{n,r}$. By the uniqueness of a unit in an algebra we must have $\left[g^{-1}\right] \cdot u \cdot \left[g\right] = u$.

Assume now that $u$ satisfies both properties. We show that $u$ is a unit of $\fa_{n,r}$. If $m \in \fM_n$ is of rank $\le r$ then we may write $m = g e_{n,r} g^{-1} \cdot B$, where $g \in \fG_n$ and $B \in \fM_n$. Then using the fact that $u = [g] u \left[g^{-1}\right]$, we have
\begin{displaymath}
u [m] = [g] u [g^{-1}] [g e_{n,r} g^{-1} B ] = [g] u [e_{n,r}] [g^{-1} B].
\end{displaymath}
Using the second property we arrive at $u \left[m\right] = \left[m\right]$, as required. 

We are left to show that $u$ is also a unit from the right of $\fa_{n,r}$. Consider the linear map $k\left[\fM_n\right] \to k\left[\fM_n\right]$ sending $\left[x\right]$ to $\left[\transpose{x}\right]$ for every $x \in \fM_n$, where $\transpose{x}$ is the transpose of $x$. Since $u$ is invariant under conjugation and since every element in $\fM_n$ is conjugate to its transpose, this linear map fixes $u$. By applying this linear map to the equality $u \left[m\right] = \left[m\right]$ we deduce $\left[\transpose{m}\right] u = \left[\transpose{m}\right]$ for any $m \in \fM_n$ with rank $\le r$. Thus we deduce that $u$ is also a unit from the right for $\fa_{r}$.
\end{proof}

We also require the following identity.

\begin{lemma} \label{lem:sum-identity}
Let $t \le r$ be positive integers. Then
\begin{displaymath}
\sum_{j=t}^r \mu(j) q^{-\left(r-1\right)j} \qbinom{r-t}{j-t} = \delta_{r,t} \cdot \mu(r)^{-1}.
\end{displaymath}
\end{lemma}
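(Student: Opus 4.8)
The plan is to recognize the sum as a disguised instance of the $q$-binomial theorem recalled in \S\ref{ss:qbinom}. First I would shift the summation index by putting $i = j - t$, so that $i$ runs from $0$ to $m := r - t$. Using the additive splitting $\binom{i+t}{2} = \binom{i}{2} + \binom{t}{2} + it$, one rewrites
\[
\mu(j) = \mu(i+t) = \mu(t)\, q^{it}\, (-1)^i q^{\binom{i}{2}}, \qquad q^{-(r-1)j} = q^{-(r-1)t}\, q^{-(r-1)i},
\]
and $\qbinom{r-t}{j-t} = \qbinom{m}{i}$. Pulling the $i$-independent factor $\mu(t)\, q^{-(r-1)t}$ out front and collecting the $q$-powers in the summand (note $q^{\binom{i}{2}} q^{it} q^{-(r-1)i} = q^{\binom{i}{2}} (q^{\,t-r+1})^i$), the remaining sum becomes
\[
\sum_{i=0}^{m} q^{\binom{i}{2}} \qbinom{m}{i}\, x^i, \qquad x = -q^{\,t-r+1} = -q^{\,1-m}.
\]

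Next I would invoke the $q$-binomial theorem $\prod_{j=0}^{m-1}(1+q^j y) = \sum_{j=0}^{m} q^{\binom{j}{2}} \qbinom{m}{j} y^j$ with $y = x$, which identifies the inner sum with the product $\prod_{j=0}^{m-1}\bigl(1 - q^{\,j+1-m}\bigr)$. If $r > t$, i.e. $m \ge 1$, then the factor with $j = m-1$ is $1 - q^0 = 0$, so the product — hence the entire original sum — vanishes, matching $\delta_{r,t} = 0$. If $r = t$, i.e. $m = 0$, the product is empty and equals $1$, so the original sum reduces to $\mu(t)\, q^{-(r-1)t} = \mu(r)\, q^{-r(r-1)}$. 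Finally, since $\mu(r)^2 = q^{2\binom{r}{2}} = q^{r(r-1)}$, this equals $\mu(r)\cdot\mu(r)^{-2} = \mu(r)^{-1}$, completing the identity.

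I do not expect a genuine obstacle here: the argument is a straightforward reindexing followed by a known generating-function identity. The only points needing a little care are the bookkeeping of $q$-exponents under the index shift (in particular the splitting of $\binom{i+t}{2}$) and pinpointing that the vanishing in the case $r>t$ comes precisely from the $j=m-1$ factor of the product. An alternative, slightly longer route would avoid the $q$-binomial theorem entirely and instead induct on $r$ using the $q$-Pascal identity from \S\ref{ss:qbinom}, but the generating-function argument above is cleaner and I would present that one.
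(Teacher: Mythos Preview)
Your proof is correct and follows essentially the same route as the paper's: both arguments shift the index by $t$, factor out the $i$-independent piece, and then apply the $q$-binomial theorem to recognize the remaining sum as a product $\prod_{j=0}^{m-1}(1-q^{j+1-m})$, which vanishes for $m\ge 1$ because of the $j=m-1$ factor. Your handling of the $r=t$ case via $\mu(r)^2=q^{r(r-1)}$ is slightly more explicit than the paper's, but the content is the same.
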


\begin{proof}
We have
\begin{displaymath}
\begin{split}
\sum_{j=t}^r \mu(j) q^{-\left(r-1\right)j} \qbinom{r-t}{j-t}
= & (-1)^t q^{\binom{t}{2}-\left(r-1\right)t} \sum_{j=0}^{r-t} (-1)^j q^{\binom{j}{2}} q^{-\left(r-t-1\right)j} \qbinom{r-t}{j} \\
= & (-1)^t q^{-\binom{t}{2}-t(r-t)} \prod_{j=0}^{r-t-1} \left(1 - q^{-(r-t-1)} q^j\right).
\end{split}
\end{displaymath}
The first step is an elementary manipulation, while the second uses the $q$-binomial theorem (\S \ref{ss:qbinom}). The final result clearly agrees with the stated formula.
\end{proof}

We are now ready to prove the theorem.

\begin{proof}[Proof of Theorem~\ref{thm:unit}]
It is clear that $\eta_r$ is invariant under conjugation by elements of $\fG_n$. Thus by Lemma \ref{lem:kuhn-criterion-for-unit}, it suffices to prove that $\eta_r \cdot [e_{n,r}] = [e_{n,r}]$. Using the expression for $\eta_r$ from Lemma~\ref{lem:eta-alt}, we find
\begin{displaymath}
\eta_r [e_{n,r}] = \sum_{j=0}^r q^{-\left(n-j\right)j} \mu(j)^{-1} \sum_{\substack{T_r \colon V_r \to V_n\\
		T_r \text{ semi-idempotent}}} \left[T_r\right] \sum_{\substack{S \colon V_n \to V_n\\
		S \text{ semi-idempotent}\\
		\rank S \le j \\
		S \restriction_{V_r} = T_r}} 
\mu(\srk{S}) \qbinom{n-\rank S}{j-\rank S},
\end{displaymath}
and so
\begin{displaymath}
\eta_r [e_{n,r}] = \sum_{j=0}^r q^{-\left(n-j\right)j} \mu(j)^{-1} \sum_{\substack{T_r \colon V_r \to V_n\\ T_r \text{ semi-idempotent}}} \left[T_r\right] h_0\left(j, T_r\right).
\end{displaymath}
By Proposition~\ref{prop:hk-independent-of-k}, $h_0(j,T_r)=h_{n-r}(j,T_r)$, and so
\begin{displaymath}
\eta_r [e_{n,r}] = \sum_{j=0}^r q^{-\left(r-j\right)j} \mu(j)^{-1} \sum_{\substack{T_r \colon V_r \to V_r\\
T_r \text{ semi-idempotent} \\
\rank T_r \le j}} \left[T_r\right] \mu(\srk{T_r}) \qbinom{r-\rank T_r}{j-\rank T_r}.
\end{displaymath}
Using the identity $\mu\left(j\right)^2 = q^{-\left(r-j\right)j} q^{\left(r-1\right)j}$, the last equality becomes 
\begin{displaymath}
\eta_r [e_{n,r}] = \sum_{\substack{T \colon V_r \to V_r\\ T \text{ semi-idempotent}}} \mu(\srk{T}) [T] \sum_{j=\rank T}^r \mu(j) q^{-\left(r-1\right)j} \qbinom{r-\rank T}{j-\rank T}.
\end{displaymath}
By Lemma~\ref{lem:sum-identity}, the inner most sum vanishes unless $\rank{T}=r$, in which case it equals $\mu(r)^{-1}$. Since the identity map $V_r \to V_r$ is the only semi-idempotent element $T \colon V_r \to V_r$ with rank $r$, and since it satisfies $\srk{T} = r$, the right side of the above equation is simply $[e_{n,r}]$, as required.
\end{proof}

\subsection{Proof of Theorem~\ref{thm:unit-decomp}} \label{ss:unit-decomp}

Denote $V = \bF^n$. We will identify elements of $\End_{\bF}\left(V\right)$ with $n \times n$ matrices $\rM_n\left(\bF\right)$ via the standard basis. Let $W \subset V$ be a linear subspace. For any linear subspace $U \subset V$ such that
\begin{displaymath}
V = W \oplus U
\end{displaymath}
we define an embedding
\begin{displaymath}
i_{W,U} \colon \End_{\bF}\left(W\right) \to \End_{\bF}\left(V\right)
\end{displaymath}
by setting $i_{W,U}\left(T\right) \restriction_W = T$ and $i_{W,U}\left(T\right) \restriction_U = 0$ for every linear map $T \colon W \to W$.

Define
\begin{displaymath}
\cE^{\ast}_{W,U} = -\mu(\dim W)^{-1} \sum_{\substack{T \colon W \to W\\ T \text{ semi-idempotent}\\
T \text{ singular}}} \mu(\srk{T}) \left[i_{W,U}\left(T\right)\right].
\end{displaymath}
It follows from Theorem~\ref{thm:unit} that $\cE^{\ast}_{W,U}$ is a unit for the monoid algebra spanned by $\left[i_{W,U}\left(T\right)\right]$ where $T$ goes over all the singular elements in $\End_{\bF}\left(W\right)$. In particular $\left(\cE^{\ast}_{W,U}\right)^2 = \cE^{\ast}_{W,U}$.

Next we define
\begin{displaymath}
\cE_{W,U} = \mu(\dim{W})^{-1} \sum_{\substack{T \colon W \to W\\
T \text{ semi-idempotent}}} \mu(\srk{T}) \left[i_{W,U}\left(T\right) \right].
\end{displaymath}
Notice that
\begin{displaymath}
\cE_{W,U} = \left[i_{W,U}\left(\id_{W}\right)\right] - \cE^{\ast}_{W,U}.
\end{displaymath}
The following statement follows from the fact that $\cE_{W,U}^{\ast}$ is an idempotent and that
\begin{displaymath}
	\left[i_{W,U}\left(\id_W\right)\right] \cE^{\ast}_{W,U} = \cE^{\ast}_{W,U} \left[i_{W,U}\left(\id_W\right)\right]  = \cE^{\ast}_{W,U}.
\end{displaymath}
\begin{proposition}\label{prop:e-idempotent-identity-V-equals-W}
Suppose that $V = W \oplus U$. Then
\begin{displaymath}
\cE_{W,U}^2 = \cE_{W,U}.
\end{displaymath}
\end{proposition}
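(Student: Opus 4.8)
The plan is to expand the square directly, using the decomposition
\[
\cE_{W,U} = \left[i_{W,U}(\id_W)\right] - \cE^{\ast}_{W,U}
\]
recorded just before the statement, together with the two facts established earlier in this subsection: that $\cE^{\ast}_{W,U}$ is idempotent, i.e.\ $\left(\cE^{\ast}_{W,U}\right)^2 = \cE^{\ast}_{W,U}$, and that $\left[i_{W,U}(\id_W)\right]\cE^{\ast}_{W,U} = \cE^{\ast}_{W,U}\left[i_{W,U}(\id_W)\right] = \cE^{\ast}_{W,U}$. Abbreviate $p = \left[i_{W,U}(\id_W)\right]$. The one extra observation I would record is that $p^2 = p$: the matrix $i_{W,U}(\id_W) \in \fM_n$ is the projection of $V = W \oplus U$ onto $W$ along $U$, so $i_{W,U}(\id_W) \circ i_{W,U}(\id_W) = i_{W,U}(\id_W)$ in the monoid and hence $p^2 = p$ in $k[\fM_n]$.

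Granting these, the proof is a one-line binomial expansion:
\begin{align*}
\cE_{W,U}^2 &= \left(p - \cE^{\ast}_{W,U}\right)^2\\
&= p^2 - p\,\cE^{\ast}_{W,U} - \cE^{\ast}_{W,U}\,p + \left(\cE^{\ast}_{W,U}\right)^2\\
&= p - \cE^{\ast}_{W,U} - \cE^{\ast}_{W,U} + \cE^{\ast}_{W,U} = p - \cE^{\ast}_{W,U} = \cE_{W,U}.
\end{align*}

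I do not anticipate a genuine obstacle here: all of the content is already packaged into the idempotency of $\cE^{\ast}_{W,U}$ and its absorption by $p$, both obtained earlier from Theorem~\ref{thm:unit} applied to the sub-monoid-algebra of $k[\fM_n]$ spanned by the $\left[i_{W,U}(T)\right]$ with $T$ singular. The only point one should not skip is the identity $\cE_{W,U} = p - \cE^{\ast}_{W,U}$, which holds because $\id_W$ is the unique nonsingular semi-idempotent endomorphism of $W$ and $\srk(\id_W) = \dim W$, so the sums defining $\cE_{W,U}$ and $\cE^{\ast}_{W,U}$ differ precisely by the $\id_W$-term $\mu(\dim W)^{-1}\,\mu(\dim W)\,\left[i_{W,U}(\id_W)\right] = p$; this is already asserted in the text and requires no further argument.
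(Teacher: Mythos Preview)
Your proof is correct and follows essentially the same approach as the paper: the paper simply states that the proposition follows from the idempotence of $\cE^{\ast}_{W,U}$ together with the absorption identity $[i_{W,U}(\id_W)]\,\cE^{\ast}_{W,U} = \cE^{\ast}_{W,U}\,[i_{W,U}(\id_W)] = \cE^{\ast}_{W,U}$, and you have spelled out exactly this computation (supplying the trivially true $p^2 = p$ along the way).
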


Recall that we have defined
\begin{displaymath}
	\eta_W = q^{-\dim W \dim\left(V \slash W\right)} \mu(\dim{W})^{-1}
	\sum_{\substack{T \colon V \to V\\
			T \text{ semi-idempotent}\\
			\im T \subset W}} \mu(\srk{T}) \left[T\right].
\end{displaymath}
We require the following relations between $\cE_{W,U}$ and $\eta_W$:
\begin{proposition}\label{prop:eta-epsilon-identities}
	Suppose that $V = W \oplus U$. We have the following identities:
		\begin{displaymath}
			\eta_W \cdot \cE_{W,U} = \cE_{W,U}
		\end{displaymath}
	and
		\begin{displaymath}
			\cE_{W,U}  \cdot \eta_W = \eta_W.
		\end{displaymath}
\end{proposition}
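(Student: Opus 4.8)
The plan is to work with an explicit parametrization of the operators occurring in $\eta_W$ and $\cE_{W,U}$. Fix the splitting $V=W\oplus U$ and set $d=\dim W$, $c=\dim U=\dim(V/W)$. Given a semi-idempotent $S\in\End(W)$ and an \emph{arbitrary} $\phi\in\Hom(U,W)$, let $T_{S,\phi}\in\End(V)$ be the operator acting as $S$ on $W$ and as $\phi$ on $U$. As in \S\ref{ss:counting-semi-idempotents}, $T_{S,\phi}$ is semi-idempotent with $\srk T_{S,\phi}=\srk S$ and $\im T_{S,\phi}\subseteq W$; every semi-idempotent operator on $V$ with image in $W$ arises this way for a unique pair $(S,\phi)$; and $i_{W,U}(S)=T_{S,0}$. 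The one formula to remember is the composition rule $T_{S,\phi}\circ T_{S',\phi'}=T_{SS',\,S\phi'}$. In this language
\[
\eta_W=q^{-cd}\mu(d)^{-1}\sum_{S,\phi}\mu(\srk S)\,[T_{S,\phi}],\qquad \cE_{W,U}=\mu(d)^{-1}\sum_{S}\mu(\srk S)\,[T_{S,0}],
\]
the sums being over semi-idempotent $S\in\End(W)$ and all $\phi\in\Hom(U,W)$.

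Two of the required facts are then formal. Since $T_{S,\phi}\circ T_{S',0}=T_{SS',0}$ does not involve $\phi$, summing out $\phi$ (a factor $q^{cd}$) gives $\eta_W\cdot\cE_{W,U}=\mu(d)^{-2}\sum_{S,S'}\mu(\srk S)\mu(\srk S')[T_{SS',0}]$, and the identical computation shows this equals $\cE_{W,U}^2$; hence $\eta_W\cdot\cE_{W,U}=\cE_{W,U}^2=\cE_{W,U}$ by Proposition~\ref{prop:e-idempotent-identity-V-equals-W}. Next, put $\pi=i_{W,U}(\id_W)=T_{\id_W,0}$. The composition rule gives $T_{S,\phi}\circ T_{\id_W,0}=T_{S,0}$ and $T_{\id_W,0}\circ T_{S,\phi}=T_{S,\phi}$, so again summing out $\phi$ yields $\eta_W\cdot[\pi]=\cE_{W,U}$ and $[\pi]\cdot\eta_W=\eta_W$. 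Consequently
\[
\cE_{W,U}\cdot\eta_W=(\eta_W\cdot[\pi])\cdot\eta_W=\eta_W\cdot([\pi]\cdot\eta_W)=\eta_W^2,
\]
so the remaining identity $\cE_{W,U}\cdot\eta_W=\eta_W$ is \emph{equivalent} to the assertion that $\eta_W$ is idempotent.

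Establishing $\eta_W^2=\eta_W$ is the heart of the matter, and I expect it to be the main obstacle. The route I would take is to compute the images of $\eta_W$ under the Kov\'acs maps $\psi_r\colon k[\fM_n]\to k[\cG_{n,r}]$ of \S\ref{ss:kovacs} and exploit that $\psi=\prod_r\psi_r$ is an algebra isomorphism. Since $\im T\subseteq W$ for every $T$ in the support of $\eta_W$, one gets $\psi_r(\eta_W)=0$ for $r>d$ at once. For $r\le d$, grouping the defining sum of $\psi_r(\eta_W)$ by the isomorphism $T|_Y\colon Y\xrightarrow{\sim}T(Y)$ it produces, the coefficient of a basis vector $[g]$, $g\colon Y\xrightarrow{\sim}Z$ an isomorphism (necessarily $Z\subseteq W$), equals $q^{-cd}\mu(d)^{-1}\sum\mu(\srk T)$, the sum over semi-idempotent $T\colon V\to V$ with $\im T\subseteq W$ and $T|_Y=g$; the claim to prove is that this is $1$ if $(Y,g)=(W,\id_W)$ and $0$ otherwise. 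The case $(W,\id_W)$ is immediate: $T|_W=\id_W$ forces $\srk T=d$, and there are exactly $q^{cd}$ such $T$, namely the $T_{\id_W,\phi}$. The vanishing in all other cases is a semi-idempotent counting argument of exactly the kind developed in \S\ref{s:semi}: it rests on the cancellation $\mu(j+1)=-q^{j}\mu(j)$ and should fall out by peeling off one coordinate at a time and invoking Lemma~\ref{lem:coordinate-free-extension-of-corank-1-partial-semi-idempotent} (together with Propositions~\ref{prop:vanishing-identity-of-coefficient-of-semi-idempotent} and \ref{prop:reduction-of-unit-inner-sum}), just as in the proof of Theorem~\ref{thm:unit}. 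Granting $\psi_r(\eta_W)=[\id_W]$ for $r=d$ and $\psi_r(\eta_W)=0$ otherwise, the idempotency $\eta_W^2=\eta_W$ is immediate, and in fact both displayed identities of the proposition then also follow directly slot-by-slot, using $\psi_r(\cE_{W,U})=\psi_r(\eta_W)\psi_r([\pi])$ together with the elementary relations $[\id_W]\cdot\psi_d([\pi])=\psi_d([\pi])$ and $\psi_d([\pi])\cdot[\id_W]=[\id_W]$ in $k[\cG_{n,d}]$. As an alternative to the $\psi$-computation, one can prove $\eta_W^2=\eta_W$ directly by expanding with the composition rule and reducing, after comparing coefficients, to the identity $\sum_{SS'=m,\ \im\psi\subseteq\im S}\mu(\srk S)\mu(\srk S')q^{c(d-\rank S)}=\mu(d)\mu(\srk m)$ for semi-idempotent $m$ (and $=0$ otherwise), valid for every $\psi\in\Hom(U,W)$; with $\psi=0$ and the weight dropped this is precisely $\zeta^2=\zeta$ for $\zeta=1-\eta_{d-1}\in k[\End(W)]$ (Theorem~\ref{thm:unit} in $\fM_d$), and the content is that neither the weight $q^{c(d-\rank S)}$ nor the constraint $\im\psi\subseteq\im S$ alters the value — which once more is a \S\ref{s:semi}-style cancellation.
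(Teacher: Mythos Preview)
Your proof of the first identity is correct and matches the paper's: both observe $\eta_W\,[i_{W,U}(\id_W)]=\cE_{W,U}$ and then reduce to $\cE_{W,U}^2=\cE_{W,U}$ (Proposition~\ref{prop:e-idempotent-identity-V-equals-W}).

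For the second identity, your reduction $\cE_{W,U}\,\eta_W=\eta_W^2$ is valid and you rightly flag idempotency of $\eta_W$ as the crux. But this inverts the paper's logic: there, idempotency of $\eta_W$ is a \emph{consequence} of the present proposition (Step~1 of the proof of Theorem~\ref{thm:unit-decomp} writes $\eta_W^2=(\cE_{W,U}\eta_W)(\cE_{W,U}\eta_W)$ and collapses it using both identities together with Proposition~\ref{prop:e-idempotent-identity-V-equals-W}). The paper's direct proof of $\cE_{W,U}\eta_W=\eta_W$ is considerably shorter than either of your sketches. Since $\cE_{W,U}=[i_{W,U}(\id_W)]-\cE^*_{W,U}$ and $[i_{W,U}(\id_W)]\,\eta_W=\eta_W$, it suffices to show $\cE^*_{W,U}\,\eta_W=0$; as $\cE^*_{W,U}$ is the unit of the subalgebra spanned by $[i_{W,U}(T)]$ with $T\in\End(W)$ singular, this is equivalent to $[i_{W,U}(T)]\,\eta_W=0$ for each such $T$; a conjugation trick (every singular $T$ factors as $B\,g\,e_{d,d-1}\,g^{-1}$, and $\eta_W$ is invariant under $\GL(W)$-conjugation in the appropriate sense) reduces to the single case $T=e_{d,d-1}$; and that case is a one-shot application of Lemma~\ref{lem:coordinate-free-extension-of-corank-1-partial-semi-idempotent} after transposing. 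By contrast, your $\psi_r$ route needs $\psi_r(\eta_W)=0$ for every $r<d$: in your parametrization (for $Y\subseteq W$) the coefficient of a basis vector $[g\colon Y\to Z]$ becomes $\mu(d)^{-1}\sum_{S\text{ s.i.},\,S|_Y=g}\mu(\srk S)$, and killing this when $\dim Y=r<d$ requires iterating the codimension-one cancellation $d-r$ times, essentially re-running the $h_\ell$ argument of Proposition~\ref{prop:hk-independent-of-k}. Your alternative direct expansion leads to the same kind of iterated work. Both of your routes can be completed, but they prove strictly more than is needed for this proposition.
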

\begin{proof}
	For linear maps $T \colon W \to W$ and $A \colon U \to W$ let $j_{W,U}\left(T,A\right) \colon V \to W$ be the linear map defined via the rule $j_{W,U}\left(T,A\right)\restriction_W = T$ and $j_{W,U}\left(T,A\right) \restriction_U = A$. Then \begin{displaymath}
		j_{W,U}\left(T_1,A_1\right) i_{W,U}\left(T_2\right) = i_{W,U}\left(T_1 T_2\right).
	\end{displaymath}
	It is clear that \begin{equation}\label{eq:expression-of-eta-W-with-j}
		\eta_W = q^{-\dim W \dim\left(V \slash W\right)} \mu(\dim{W})^{-1} \sum_{A \colon U \to W} \sum_{\substack{T \colon W \to W\\
				T \text{ semi-idempotent}\\
				\im T \subset W}} \mu\left(\srk T\right) j_{W,U}\left(T, A\right).
	\end{equation}
	Thus $\eta_W \left[\id_{W,U}\left(\id_W\right)\right]$ equals
	\begin{displaymath}
		q^{-\dim W \dim\left(V \slash W\right)} \mu(\dim{W})^{-1} q^{\dim U \dim W} \sum_{\substack{T \colon W \to W\\
				T \text{ semi-idempotent}\\
				\im T \subset W}} \mu\left(\srk T\right) i_{W,U}\left(T\right) = \cE_{W,U}.
	\end{displaymath}
	This implies
	\begin{displaymath}
		\eta_W \cE_{W,U} = \eta_W \left[i_{W,U}\left(\id_{W}\right)\right] \cE_{W,U} = \cE_{W,U}^2 = \cE_{W,U},
	\end{displaymath}
	which proves the first identity.
	
	Regarding the second identity, this identity is equivalent to the identity
	\begin{displaymath}
		\cE^{\ast}_{W,U} \cdot \eta_W = 0.
	\end{displaymath}
	Since $\cE_{W,U}^{\ast}$ is the identity of the subalgebra spanned by $\left[i_{W,U}\left(T\right)\right]$ where $T$ runs over all singular elements $T \colon W \to W$, the last identity is equivalent to the identity
	\begin{displaymath}
		\left[i_{W,U}\left(T\right)\right] \cdot \eta_W = 0
	\end{displaymath}
	for any singular map $T \colon W \to W$. Choose a basis $b_1,\dots,b_m$ for $W$ and a basis $b_{m+1},\dots,b_{n}$ for $U$ and identify linear maps $T \colon W \to W$ with $m \times m$ matrices and linear maps $V \to V$ with $n \times n$ matrices. We claim that it suffices to prove that \begin{displaymath}
		\left[i_{W,U}\left(e_{m,m-1}\right)\right] \eta_W = 0.
	\end{displaymath}
	Indeed, by \eqref{eq:expression-of-eta-W-with-j} we have that for any $g \in \GL_m\left(\bF\right)$ \begin{displaymath}
		\left[\diag\left(g, I_{n-m}\right)\right] \left[i_{W,U}\left(e_{m,m-1}\right)\right] \eta_W \left[\diag\left(g^{-1}, I_{n-m}\right)\right] = \left[i_{W,U}\left(g e_{m,m-1} g^{-1}\right)\right] \eta_W.
	\end{displaymath}
	As in Lemma~\ref{lem:kuhn-criterion-for-unit}, we can write every singular linear map $T \colon W \to W$ as $T = B g e_{m,m-1} g^{-1}$, where $B \colon W \to W$ is a linear map. Thus if $\left[i_{W,U}\left(e_{m,m-1}\right)\right] \eta_W = 0$ then
	\begin{displaymath}
		\left[i_{W,U}\left(T\right)\right] \eta_W = [\diag\left(B, I_{n-m}\right)] [\diag\left(g, I_{n-m}\right)] \left[i_{W,U}\left(e_{m,m-1}\right)\right] \eta_W [\diag\left(g, I_{n-m}\right)]^{-1} = 0.
	\end{displaymath}
	
	To prove that $\left[i_{W,U}\left(e_{m,m-1}\right)\right] \eta_W = 0$ we will use Lemma~\ref{lem:coordinate-free-extension-of-corank-1-partial-semi-idempotent}. Using \eqref{eq:expression-of-eta-W-with-j} again we have that for matrices with last row zero $A$ and $T$ of sizes $m \times \left(n-m\right)$ and $m \times m$, respectively, the coefficient of $\left[j_{W,U}\left(T,A\right)\right]$ in $\left[i_{W,U}\left(e_{m,m-1}\right)\right] \eta_W$ equals
	\begin{displaymath}
		q^{-\dim W \dim\left(V \slash W\right)} \mu(\dim{W})^{-1} \cdot q^{n-m} \cdot \sum_{\substack{S \colon W \to W\\
				S \text{ semi-idempotent}\\
				e_{m,m-1} S = T}} \mu\left(\srk S\right).
	\end{displaymath}
	Here the factor $q^{n-m}$ is the number of matrices that can be obtained by changing the last row of $A$. Consider the inner sum
	\begin{displaymath}
		\sum_{\substack{S \colon W \to W\\
				S \text{ semi-idempotent}\\
				e_{m,m-1} S = T}} \mu\left(\srk S\right).
	\end{displaymath}
	Applying the transpose map to the condition $e_{m,m-1} S = T$ gives a condition on the restriction of the semi-idempotent element $\transpose{S}$ to the codimensional $1$ subspace $V_{m-1} \subset W$ defined by 
	\begin{displaymath}
		V_{m-1} = \Span_{\bF}\left(b_1,\dots,b_{m-1}\right).
	\end{displaymath}
	In particular, $\transpose{T} \restriction_{V_{m-1}}$ has to be semi-idempotent. Using the notation of Lemma~\ref{lem:coordinate-free-extension-of-corank-1-partial-semi-idempotent}, we have that the last sum equals
	\begin{displaymath}
		\mu\left(\srk T\right) e\left(\transpose{T} \restriction_{V_{m-1}}, \srk T\right) + \mu\left(\srk T + 1\right) e\left(\transpose{T} \restriction_{V_{m-1}}, \srk T+1\right),
	\end{displaymath}
	which by Lemma~\ref{lem:coordinate-free-extension-of-corank-1-partial-semi-idempotent} equals zero, as $\mu\left(\srk T + 1\right) = -\mu\left(\srk T\right) q^{\srk T}$ and $\srk T = \srk \transpose{T}$. Thus $\left[i_{W,U}\left(e_{m,m-1}\right)\right]\eta_W =0$ and the second identity is proved.
\end{proof}
Next we show that $\eta_{W}$ are orthogonal for different $W$.
\begin{proposition}\label{prop:orthogonality-of-eta}
	Let $W_1, W_2 \subset V$. If $W_1 \ne W_2$ then $\eta_{W_1} \eta_{W_2} = 0$.
\end{proposition}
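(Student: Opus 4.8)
Here is how I would approach the final statement.

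First, a remark: each $\eta_W$ is idempotent, since for any complement $U$ of $W$ the two identities of Proposition~\ref{prop:eta-epsilon-identities} give $\eta_W^2 = \eta_W(\cE_{W,U}\eta_W) = (\eta_W\cE_{W,U})\eta_W = \cE_{W,U}\eta_W = \eta_W$. For the orthogonality, I would fix $W_1 \ne W_2$, pick a complement $U_2$ of $W_2$, and use $\cE_{W_2,U_2}\eta_{W_2} = \eta_{W_2}$ to reduce: $\eta_{W_1}\eta_{W_2} = \eta_{W_1}\cE_{W_2,U_2}\eta_{W_2}$, so it suffices to prove $\eta_{W_1}\cE_{W_2,U_2} = 0$ for one good choice of $U_2$.

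I would then normalize $W_1$. As in the proof of Proposition~\ref{prop:eta-epsilon-identities}, for $x = g \oplus h$ with $g \in \GL(W_2)$, $h \in \GL(U_2)$ (relative to $V = W_2 \oplus U_2$) one has $[x]^{-1}\cE_{W_2,U_2}[x] = \cE_{W_2,U_2}$, while $[x]^{-1}\eta_{W_1}[x] = \eta_{x^{-1}(W_1)}$ since conjugating a semi-idempotent by $x$ transports its image by $x$. So it is enough to treat $W_1'$ anywhere in the $\GL(W_2)\times\GL(U_2)$-orbit of $W_1$, and we are also free to choose $U_2$. Since $W_1 \cap W_2$ has a complement $B$ in $W_1$ with $B \cap W_2 = 0$, choose $U_2 \supseteq B$; then $W_1 = A \oplus B$ with $A := W_1 \cap W_2 \subseteq W_2$ and $B \subseteq U_2$, and after conjugation $A,B$ become coordinate subspaces. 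Because $W_1 \ne W_2$ this splits into: $B = 0$, i.e.\ $W_1 \subsetneq W_2$; and $B \ne 0$, i.e.\ $W_1 \not\subseteq W_2$.

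The case $W_1 \subsetneq W_2$ is then easy. Every semi-idempotent $S$ occurring in $\eta_{W_1}$ has $\im S \subseteq W_1 \subseteq W_2$, so with $e_{W_2} := i_{W_2,U_2}(\id_{W_2})$ the matrix $S e_{W_2}$ kills $U_2$, has image in $W_1$, hence equals $i_{W_2,U_2}(P)$ with $\rank P \le \dim W_1 \le \dim W_2 - 1$. Thus $\eta_{W_1}[e_{W_2}]$ lies in $B_2 := \Span\{[i_{W_2,U_2}(P)] : P \in \End(W_2)\} \cong k[\fM_{\dim W_2}]$, and in fact in the ideal spanned by the $[i_{W_2,U_2}(P)]$ with $P$ singular. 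As recalled right after its definition, $\cE^{\ast}_{W_2,U_2}$ is the unit of that ideal, so $\eta_{W_1}[e_{W_2}]\cE^{\ast}_{W_2,U_2} = \eta_{W_1}[e_{W_2}]$; combining this with $\cE_{W_2,U_2} = [e_{W_2}] - \cE^{\ast}_{W_2,U_2}$, $[e_{W_2}]\cE_{W_2,U_2} = \cE_{W_2,U_2}$, and the fact that $[e_{W_2}]$ is the unit of $B_2$ yields $\eta_{W_1}\cE_{W_2,U_2} = \eta_{W_1}[e_{W_2}]\cE_{W_2,U_2} = \eta_{W_1}[e_{W_2}] - \eta_{W_1}[e_{W_2}] = 0$.

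The case $W_1 \not\subseteq W_2$ is where the real work lies. Here I would expand $\eta_{W_1}\cE_{W_2,U_2}$ in the basis $\{[m]\}$: writing $\iota_2(T) := i_{W_2,U_2}(T)$, the product $S\,\iota_2(T)$ kills $U_2$ and, since $\im \iota_2(T) \subseteq W_2$, depends on the semi-idempotent $S$ only through $S|_{W_2}$; grouping the double sum over semi-idempotent pairs $(S,T)$ by $\tau := (S|_{W_2})\circ T \colon W_2 \to W_1$, the coefficient of the matrix $m$ with $m|_{W_2} = \tau$ becomes, up to an invertible scalar, a paired alternating sum $\sum \mu(\srk S)\mu(\srk T)$ in which $S$ runs over semi-idempotent extensions of a fixed partial map on $W_2$ with image in $W_1$. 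I would evaluate this along the lines of \S\ref{s:semi}: fix a chain $W_2 = X_0 \subsetneq X_1 \subsetneq \cdots \subsetneq X_{n-\dim W_2} = V$ of codimension-one inclusions and peel off one coordinate at a time. Because $W_1 \not\subseteq W_2$ (with the flag chosen suitably), at each relevant step the partial map met has image not contained in the current subspace, so Proposition~\ref{prop:vanishing-identity-of-coefficient-of-semi-idempotent} applies — as in the inductive step of Proposition~\ref{prop:hk-independent-of-k} — and collapses the contribution to a single comparison of the form $\mu(k)\,e(\cdots,k) + \mu(k+1)\,e(\cdots,k+1)$, which vanishes by Lemma~\ref{lem:coordinate-free-extension-of-corank-1-partial-semi-idempotent} since $\mu(k+1) = -q^{k}\mu(k)$. (For $W_1 = W_2$ the same reduction instead reaches the surviving case of Proposition~\ref{prop:reduction-of-unit-inner-sum} and reproduces $\eta_W\cE_{W,U} = \cE_{W,U} \ne 0$, confirming that the dichotomy is sharp.) The step I expect to be delicate is exactly this last one: carrying the image constraint $\im S \subseteq W_1$ through the chain and checking that the chosen flag keeps every step in the vanishing regime. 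Granting that, all coefficients vanish, so $\eta_{W_1}\cE_{W_2,U_2} = 0$ and hence $\eta_{W_1}\eta_{W_2} = 0$. An alternative packaging of the same computation — with the same essential obstacle — would be to prove directly that $\psi_s(\eta_W) = [\id_W]$ for $s = \dim W$ and $\psi_s(\eta_W) = 0$ otherwise (the case $s > \dim W$ being free, as $\eta_W \in \fa_{n,\dim W}$ and $\psi_s$ kills $\fa_{n,r}$ for $s>r$), and then conclude from $[\id_{W_1}][\id_{W_2}] = 0$ in $k[\cG_{n,r}]$ for $W_1 \ne W_2$ together with the injectivity of $\psi$.
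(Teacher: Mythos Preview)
Your overall reduction---insert $\cE_{W_2,U_2}$ via Proposition~\ref{prop:eta-epsilon-identities} and aim for $\eta_{W_1}\cE_{W_2,U_2}=0$---is sound, and your treatment of the case $W_1 \subsetneq W_2$ is correct and rather neat: routing $\eta_{W_1}[e_{W_2}]$ into the ideal of singular elements inside $B_2 \cong k[\fM_{\dim W_2}]$ and then killing it with the unit $\cE^{\ast}_{W_2,U_2}$ is a clean device that the paper does not use.

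The case $W_1 \not\subseteq W_2$, however, has a genuine gap. You propose to expand $\eta_{W_1}\cE_{W_2,U_2}$, group the resulting double sum over semi-idempotent pairs $(S,T)$ by $\tau = (S|_{W_2})\circ T$, and then peel along a flag from $W_2$ up to $V$ using Proposition~\ref{prop:vanishing-identity-of-coefficient-of-semi-idempotent}. But that proposition concerns \emph{unconstrained} semi-idempotent extensions of a codimension-one partial map; your $S$ carries the side condition $\im S \subseteq W_1$, which none of the counting lemmas of \S\ref{s:semi} accommodate, and you never explain how the sum over $T$ decouples. You flag this step as ``delicate'' yourself and do not carry it through; as written it is not a proof.

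The paper avoids the double sum entirely with two simplifications you miss. First, instead of inserting $\cE_{W_2,U_2}$ it inserts the single basis element $[e_{W_2}] := [i_{W_2,U_2}(\id_{W_2})]$, using only that $[e_{W_2}]\eta_{W_2}=\eta_{W_2}$ (every summand of $\eta_{W_2}$ has image in $W_2$), so one needs just $\eta_{W_1}[e_{W_2}]=0$. Second---and this is the key trick---it picks a basis $b_1,\dots,b_m$ of $W_1$ with $b_m \in U_2$ (possible precisely when $W_1 \not\subseteq W_2$) and observes that the $W_1$-adapted corank-one projection $p := \diag(e_{m,m-1},I_{n-m})$ kills $b_m$ but is the identity on $W_2$, so $p\cdot e_{W_2} = e_{W_2}$ and it suffices to prove $\eta_{W_1}[p]=0$. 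This last identity now involves only $W_1$: via the expression~\eqref{eq:expression-of-eta-W-with-j} each coefficient is a sum over semi-idempotent $T\colon W_1 \to W_1$ extending a fixed restriction to $\Span(b_1,\dots,b_{m-1})$, and a single application of Lemma~\ref{lem:coordinate-free-extension-of-corank-1-partial-semi-idempotent}\,(\ref{item:identity-for-number-of-matrices-with-given-rank-at-infinity}) makes it vanish. No double sum, no flag induction, no image constraint to carry through.
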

\begin{proof}
	Choose decompositions $V = W_1 \oplus U_1 = W_2 \oplus U_2$ such that $U_2 \cap W_1 \ne 0$. Then \begin{displaymath}
		\eta_{W_1} \eta_{W_2} = \eta_{W_1} \left[i_{W_2,U_2}\left(\id_{W_2}\right)\right] \eta_{W_2}.
	\end{displaymath} 
	We will show that
	\begin{displaymath}
		\eta_{W_1} \left[i_{W_2, U_2}\left(\id_{W_2}\right)\right] = 0.
	\end{displaymath}
	Let $b_1,\dots,b_m$ be a basis of $W_1$ such that $b_m \in U_2$. Let $b_{m+1}, \dots, b_n$ be a basis of $U_1$. Identifying linear maps $T \colon V \to V$ with $n \times n$ matrices we notice that
	\begin{displaymath}
		\diag\left(e_{m,m-1}, I_{n-m} \right) \cdot i_{W_2,U_2}\left(\id_{W_2}\right) = i_{W_2,U_2}\left(\id_{W_2}\right)
	\end{displaymath}
	and thus it suffices to show that $\eta_{W_1} \left[\diag\left(e_{m,m-1}, I_{n-m}\right)\right] = 0$.
	
	By \eqref{eq:expression-of-eta-W-with-j}, $q^{\dim W_1 \dim\left(V \slash W_1\right)} \mu(\dim{W_1}) \eta_{W_1} \left[\diag\left(e_{m,m-1}, I_{n-m}\right)\right]$ equals
	\begin{displaymath}
		\sum_{A \colon U_1 \to W_1} \sum_{\substack{T \colon W_1 \to W_1\\
				T \text{ semi-idempotent}}} \mu\left(\srk T\right) \left[j_{W,U}\left(T e_{m,m-1}, A\right)\right].
	\end{displaymath}
	Let $V_{m-1}$ be the subspace of $W_1$ of co-dimension $1$ defined by $V_{m-1} = \Span_{\bF}\left(b_1,\dots,b_{m-1}\right)$. Let $S \colon W_1 \to W_1$ be a linear map such that $S b_m = 0$ and such that $S\restriction_{V_{m-1}}$ is semi-idempotent, and let $A \colon U_1 \to W_1$ be a linear map. Using the notation of Lemma~\ref{lem:coordinate-free-extension-of-corank-1-partial-semi-idempotent}, we have that the coefficient of $\left[j_{W,U}\left(S, A\right)\right]$ in the last sum is
	\begin{displaymath}
		\mu\left(\srk S\right) e\left(S \restriction_{V_{m-1}}, \srk\right) + \mu\left(\srk S + 1\right) e\left(S \restriction_{V_{m-1}}, \srk + 1\right).
	\end{displaymath}
	As in the previous proof, Lemma~\ref{lem:coordinate-free-extension-of-corank-1-partial-semi-idempotent} implies that this expression is zero.
\end{proof}

We are now ready to prove the theorem.

\begin{proof}[Proof of Theorem~\ref{thm:unit-decomp}]
We break the proof into several steps.

\textit{Step 1: idempotence.}
By Proposition \ref{prop:eta-epsilon-identities} we have that
\begin{displaymath}
\eta_W^2 = \cE_{W,U} \left(\eta_W \cE_{W,U}\right) \eta_W = \cE_{W,U} \cE_{W,U} \eta_W = \cE_{W,U} \eta_W = \eta_W,
\end{displaymath}
and so $\eta_W$ is idempotent.

\textit{Step 2: orthogonality.}
This follows from Proposition~\ref{prop:orthogonality-of-eta}.

\textit{Step 3: expression for $\eta_r$.}
From Lemma~\ref{lem:eta-alt}, we have
\begin{equation*}
\eta_r = \sum_{j=0}^r q^{-\left(n-j\right)j} \mu(j)^{-1} \sum_{\substack{T \in \End_{\bF}\left(V\right)\\
\rank T \le j\\
T \text{ semi-idempotent}}} \mu(\srk{T}) \qbinom{n - \rank T }{j - \rank T} \left[T\right].
\end{equation*}
Notice that for $T$ in the inner sum, we have that $\im T$ is a subspace of $V$ of dim $\le j$ and that the $q$-binomial $\qbinom{r-\rank T}{j-\rank T}$ counts the number of subspaces $W \subset V$ of dimension $j$ such that $\im T \subset W$. Thus we may rewrite $\eta_r$ as
\begin{displaymath}
\eta_r = \sum_{j=0}^r \sum_{\substack{W \subset V\\ \dim W = j}} q^{-\left(n-j\right)j} \mu(j)^{-1} \sum_{\substack{T \in \End_{\bF}\left(V\right)\\
\im T \subset W \\
T \text{ semi-idempotent}}} \mu(\srk{T}) \left[T\right].
\end{displaymath}
From this we obtain the expression for $\eta_r$.

\textit{Step 4: computation of $\psi_r$.}
We now explain the formula for $\psi_r(\eta_W)$. It is clear that $\psi_r(\eta_W)=0$ if $\dim{W}<r$. Suppose now that $\dim{W}=r$. Let $W' \subset V$ be another subspace of dimension $r$. Choose decompositions $V = W \oplus U = W' \oplus U'$ such that $U' \cap W \ne 0$. As in the proof of Proposition~\ref{prop:orthogonality-of-eta}, we have that $\eta_{W} \left[i_{W', U'}\left(\id_{W'}\right)\right] = 0$. Thus $W$ is the only space that can contribute to $\psi_r\left(\eta_W\right)$. Since $\eta_W \left[i_{W,U}\left(\id_{W}\right)\right] = \cE_{W,U}$, we see that $\psi_r\left(\eta_W\right) = \psi_r\left(\cE_{W,U}\right)$. It is clear that $\psi_r(\cE_{W,U}^*)=0$, and so $\psi_r(\cE_{W,U})=[\id_W]$.
\end{proof}

\begin{remark} \label{rmk:solomon}
In \cite{Solomon3}, Solomon gives a similar formula for the unit of the rook monoid algebra. Our notation differs from his. His $\mu_r$ is analogous to our $\eta_r - \eta_{r-1}$. In addition, his $\cE_{K}$ is analogous to our 
\begin{displaymath}
\cE'_{W} = \sum_{W' \subset W} \eta_{W'},
\end{displaymath}
for a linear subspace $W \subset \bF^n$. Using the orthogonality relations proved above it is easy to check that $\cE'_{W_1} \cdot \cE'_{W_2} = \cE'_{W_1 \cap W_2}$. It can be shown that
\begin{displaymath}
\cE'_{W} = \mu\left(\dim W\right)^{-1} \sum_{\substack{T \colon V \to V\\
	T \text{ semi-idempotent} \\
	\im T = W}} \mu(\srk{T}) \left[T\right].
\end{displaymath}
Thus
\begin{displaymath}
	\eta_{W} = q^{-\dim W \left(n-\dim W\right)}  \mu\left(\dim W\right)^{-1} \sum_{W' \subset W} \mu\left(\dim W'\right) \cE'_{W'}.
\end{displaymath}
\end{remark}

\section{Module theory}\label{s:module-theory}

We now investigate the structure of $k[\fM_n]$-modules. We assume throughout that $k$ is algebraically closed of characteristic~0. Weaker hypotheses suffice in parts, but to simplify exposition we make this blanket assumption.

\subsection{Simple modules} \label{ss:simple}

We have seen (Proposition~\ref{prop:structure}) that $k[\fM_n]$ is isomorphic to a product of groupoid algebras $k[\cG_{n,r}]$; moreover, $k[\cG_{n,r}]$ is isomorphic to a matrix algebra over $k[\fG_r]$ (Proposition~\ref{prop:groupoid}). It follows that simple $k[\fM_n]$-modules correspond to simple $k[\fG_r]$-modules, for $0 \le r \le n$. For a simple $k[\fG_r]$-module $\pi$, we let $L_n(\pi)$ be the corresponding simple $k[\fM_n]$-module.

We now aim to describe $L_n(\pi)$ explicitly. For this, we recall a result from Morita theory. Let $A$ be a finite dimensional $k$-algebra, and fix $n \ge 1$. Then the category of $A$-modules is equivalent to the category of $\rM_n(A)$-modules. If $V$ is an $A$-module the associated $\rM_n(A)$-module is $V \otimes k^n$. The action is the natural one if we identify $M_n(A)$ with $A \otimes_k M_n(k)$. See \cite[Example~A.27]{Steinberg}.

We now describe $L_n(\pi)$. Thus fix an irreducible representation $\pi \colon \fG_r \to \GL(V)$. Let $U_1, \ldots, U_N$ be the $r$-dimensional subspaces of $\bF^n$, and for each $1 \le i \le N$, fix an isomorphism $f_i \colon \bF^r \to U_i$. For each $r$-dimensional subspace $U$ of $\bF^n$, fix an isomorphism $f_i \colon \bF^r \to U$. By Proposition~\ref{prop:groupoid}, we have an isomorphism
\begin{displaymath}
k[\cG_{n,r}] \to \rM_N(k[\fG_r]), \qquad [a \colon U_i \to U_j] \mapsto [f_j^{-1} a f_i] E_{j,i},
\end{displaymath}
where $E_{j,i} \in \rM_N(k)$ is the usual elementary matrix. We thus obtain a left $k[\cG_{n,r}]$-module structure on $V \otimes k^N$ by letting $[a]$ act by $\pi(f_j^{-1} a f_i) \otimes E_{j,i}$, where $a \colon U_i \to U_j$. We obtain a left $k[\fM_n]$-module structure on $V \otimes k^N$ by composing with $\psi_r$; this is the module $L_n(\pi)$. For $m \in \fM_n$, the action of $[m]$ is by the operator $\sum_{(i,j) \in S} \pi(f_j^{-1} m f_i) \otimes E_{j,i}$ where $S \subset [N] \times [N]$ consists of those pairs $(i,j)$ such that $m(U_i)=U_j$. Here $[N]=\{1, \ldots, N\}$.

As a corollary of the above discussion, we obtain a formula for the character of $L_n(\pi)$.

\begin{proposition}
Using the above notation, we have
\begin{displaymath}
\tr(m \vert L_n(\pi)) = \sum_{(i,i) \in S} \tr(\pi(f_i^{-1} m f_i)).
\end{displaymath}
\end{proposition}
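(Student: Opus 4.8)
The plan is to read off the trace directly from the explicit description of the module structure on $L_n(\pi)$ given just above, using nothing beyond linearity of the trace and its multiplicativity on tensor products.

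Recall that, under the identification $L_n(\pi) = V \otimes k^N$, the element $[m]$ acts by the operator
\begin{displaymath}
\rho(m) = \sum_{(i,j) \in S} \pi(f_j^{-1} m f_i) \otimes E_{j,i},
\end{displaymath}
where $S \subset [N] \times [N]$ is the set of pairs $(i,j)$ with $m(U_i) = U_j$ and $E_{j,i} \in \rM_N(k)$ is the elementary matrix. First I would apply linearity of the trace to obtain
\begin{displaymath}
\tr(m \vert L_n(\pi)) = \sum_{(i,j) \in S} \tr\bigl(\pi(f_j^{-1} m f_i) \otimes E_{j,i}\bigr).
\end{displaymath}
Next, for an operator $A$ on $V$ and an operator $B$ on $k^N$ one has $\tr(A \otimes B) = \tr(A)\,\tr(B)$, and $\tr(E_{j,i}) = \delta_{i,j}$. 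Therefore every term with $i \neq j$ vanishes, and the sum collapses to $\sum_{(i,i) \in S} \tr(\pi(f_i^{-1} m f_i))$, which is exactly the claimed formula.

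There is no genuine obstacle here: the proposition is an immediate consequence of the explicit operator for the $k[\fM_n]$-action already exhibited, together with the behavior of the trace on $V \otimes k^N$. The only point meriting a moment's care is the index bookkeeping — a pair $(i,j) \in S$ with $i \neq j$ corresponds to an operator carrying the $i$-th copy of $V$ into the $j$-th copy, hence having zero trace — so that the only contributions come from those $U_i$ with $m(U_i) = U_i$, i.e.\ the pairs $(i,i) \in S$.
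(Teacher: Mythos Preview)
Your argument is correct and is precisely the computation the paper has in mind: the proposition is stated as an immediate corollary of the explicit operator $\sum_{(i,j)\in S}\pi(f_j^{-1}mf_i)\otimes E_{j,i}$, and your use of $\tr(A\otimes B)=\tr(A)\tr(B)$ together with $\tr(E_{j,i})=\delta_{i,j}$ is exactly how one reads off the formula.
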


\begin{remark}
This formula is very similar to the analogous one for the rook monoid obtained by Munn, see \cite[Theorem~2.30]{Solomon3}.
\end{remark}

\subsection{Restriction to the group} \label{ss:res-group}

Since $\fG_n$ is a submonoid of $\fM_n$, any $k[\fM_n]$-module can be restricted to $k[\fG_n]$. We now examine how simple modules restrict.

\begin{proposition} \label{prop:res-group}
Let $\pi$ be a simple $k[\fG_r]$-module. Then the restriction of $L_n(\pi)$ to $k[\fG_n]$ is isomorphic to the parabolic induction $\Ind_{\fP_{r,n-r}}^{\fG_n}(\pi \boxtimes \bone)$.
\end{proposition}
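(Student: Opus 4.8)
The plan is to unwind the explicit description of $L_n(\pi)$ given in \S\ref{ss:simple} and recognize the resulting $\fG_n$-module as a space of functions induced from a parabolic. Recall that $L_n(\pi) = V \otimes k^N$, where $V$ is the space of $\pi$ and $U_1, \dots, U_N$ are the $r$-dimensional subspaces of $\bF^n$, with chosen isomorphisms $f_i \colon \bF^r \to U_i$. For $g \in \fG_n$ the action of $[g]$ is by $\sum_{i} \pi(f_{\sigma(i)}^{-1} g f_i) \otimes E_{\sigma(i), i}$, where $\sigma$ is the permutation of $[N]$ induced by $g$ (since $g$ is invertible it carries each $U_i$ bijectively to another $r$-dimensional subspace $g(U_i) = U_{\sigma(i)}$). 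So as a $\fG_n$-representation, $L_n(\pi)$ is the space of functions $\varphi$ on the set $X$ of pairs $(U, v)$ with $U$ an $r$-dimensional subspace and $v \in V$, supported appropriately — more precisely it is $\bigoplus_i V$, with $\fG_n$ permuting the summands according to its action on the $U_i$ and twisting by $\pi(f_j^{-1} g f_i)$ on the way.

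The clean way to package this is to identify $L_n(\pi)\vert_{\fG_n}$ with the induced module $\Ind_H^{\fG_n}(\widetilde\pi)$, where $H = \mathrm{Stab}_{\fG_n}(U_1)$ is the stabilizer of a fixed $r$-dimensional subspace (a maximal parabolic with Levi $\fG_r \times \fG_{n-r}$, i.e.\ conjugate to $\fP_{r,n-r}$), and $\widetilde\pi$ is the representation of $H$ on $V$ obtained by letting $h \in H$ act by $\pi(f_1^{-1} h f_1)$ — note $f_1^{-1} h f_1 \in \fG_r$ is well-defined since $h$ preserves $U_1$. Concretely: choose coset representatives $g_i \in \fG_n$ with $g_i(U_1) = U_i$; the map sending $\varphi \in \Ind_H^{\fG_n}(\widetilde\pi)$ to $(\varphi(g_1), \dots, \varphi(g_N)) \in V^{\oplus N} = L_n(\pi)$ is a linear isomorphism, and a direct check (using $\Isom$-composition and the definition of the action of $[g]$ via $\psi_r$) shows it is $\fG_n$-equivariant. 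This is a routine verification once the coset bookkeeping is set up, because the $\psi_r$-twist $\pi(f_j^{-1} g f_i)$ is exactly the cocycle that appears in the standard model of an induced representation. It remains to observe that $\widetilde\pi$ is precisely the inflation to $H$ of $\pi \boxtimes \bone$ along $H \twoheadrightarrow \fG_r \times \fG_{n-r} \to \fG_r$: indeed $H$ acts on $U_1$ through $\fG_r$ via $h \mapsto f_1^{-1}(h\vert_{U_1})f_1$ and trivially on the relevant quotient, which is the defining feature of $\pi \boxtimes \bone$ as a $\fP_{r,n-r}$-module. Hence $\Ind_H^{\fG_n}(\widetilde\pi) \cong \Ind_{\fP_{r,n-r}}^{\fG_n}(\pi \boxtimes \bone)$, completing the argument.

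The main obstacle is purely organizational rather than conceptual: one must be careful that the two sets of auxiliary isomorphisms — the $f_i \colon \bF^r \to U_i$ used to trivialize the groupoid $\cG_{n,r}$ in Proposition~\ref{prop:groupoid}, and the coset representatives $g_i \in \fG_n$ used to trivialize the induced module — are compatible, or rather that the equivariant isomorphism absorbs their discrepancy into an automorphism of $U_1$, i.e.\ into an element of $\fG_r$ acting through $\pi$. Concretely, $g_i f_1$ and $f_i$ are two isomorphisms $\bF^r \to U_i$, so $f_i^{-1} g_i f_1 \in \fG_r$, and this element is what makes the naive map $\varphi \mapsto (\varphi(g_i))_i$ fail to be strictly equivariant unless one builds it in. Once this correction is incorporated the equivariance is a one-line computation. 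An alternative, arguably slicker route avoiding coordinates entirely: observe directly that $L_n(\pi)$ is the space of sections of the $\fG_n$-equivariant vector bundle over the Grassmannian $\mathrm{Gr}(r, \bF^n) = \fG_n/\fP_{r,n-r}$ whose fiber over a subspace $U$ is $\pi$ applied to $U$ (a well-defined $\fG_r$-torsor worth of data), and such a section space is by definition $\Ind_{\fP_{r,n-r}}^{\fG_n}$ of the fiber representation at the base point, which is $\pi \boxtimes \bone$. I would present the coordinate version for rigor but remark on the bundle picture for intuition.
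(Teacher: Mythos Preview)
Your proposal is correct and follows essentially the same approach as the paper: both recognize the direct sum decomposition $L_n(\pi)=\bigoplus_i V\otimes ku_i$ as a system of imprimitivity for $\fG_n$, identify the stabilizer of one summand with the parabolic, and check that the parabolic acts on that summand through $\pi\boxtimes\bone$. The paper streamlines the bookkeeping you worry about by choosing $U_1=\bF^r$ (so $H=\fP_{r,n-r}$ on the nose and $f_1$ is the identity), which eliminates the $f_i^{-1}g_if_1$ correction term entirely; the explicit isomorphism via Frobenius reciprocity that you outline is then recorded in the Remark following the proof.
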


\begin{proof}
Use notation as in \S \ref{ss:simple}, and choose $U_1$ to be the standard $r$-dimensional subspace $\bF^r$ of $\bF^n$, so that its stabilizer in $\fG_n$ is the parabolic subgroup $\fP_{r,n-r}$. Let $u_1, \ldots, u_N$ be the basis of $k^N$, where $u_i$ corresponds to $U_i$. We have a decomposition
\begin{displaymath}
V \otimes k^N = \bigoplus_{i=1}^N V \otimes ku_i.
\end{displaymath}
Each $g \in \fG_n$ induces a permutation of the set $\{1,\ldots,N\}$, through its action on $r$-dimensional subspaces, and we have $g(V \otimes ku_i) = V \otimes ku_{gi}$. It follows that the space $V \otimes ku_1$ is stable under $\fP_{r,n-r}$. Moreover, the action of $g \in \fP_{r,n-r}$ on this space is by the operator $\pi(\ol{g})$, where $\ol{g}$ is the image of $g$ under the quotient map $\fP_{r,n-r} \to \fG_r$. These claims follow from the explicit description of the module structure given in \S \ref{ss:simple}, and they imply the present proposition.
\end{proof}

\begin{remark}
We can make the above proposition more explicit. We have a map
\begin{displaymath}
\phi_0 \colon V \to V \otimes k^N, \qquad x \mapsto x \otimes u_1
\end{displaymath}
of $k[\fP_{r,n-r}]$-modules. The target here is $L_n(\pi)$, which is a $k[\fG_n]$-module. Thus, by Frobenius reciprocity, we have a map
\begin{displaymath}
\phi \colon k[\fG_n] \otimes_{k[\fP_{r,n-r}]} \pi \to V \otimes k^N
\end{displaymath}
of $k[\fG_n]$-modules. The explicit description of this map uses the definition of the $\fG_n$-action on the target given in \S \ref{ss:simple}. Let $g \in \fG_n$ and $x \in V$ be given, and $gU_1=U_j$. Then
\begin{displaymath}
\phi(g \otimes x) = \pi(f_j^{-1}gf_1)x  \otimes u_j
\end{displaymath}
where the $f_i$'s are as in \S \ref{ss:simple}. This $\phi$ is the isomorphism constructed in Proposition~\ref{prop:res-group}.
\end{remark}

Combining the above proposition with the theory from \S \ref{ss:repGLn} allows us to determine the irreducible decomposition of $L_n(\pi)$ as a $k[\fG_n]$-module. 

\begin{corollary} \label{cor:RestrictionCombinatorics}
Let $\pi$ be a simple $k[\fG_r]$-module, and write $\pi=\pi_0\{\lambda\}$ where $\pi_0$ is a pure $k[\fG_s]$-module and $\lambda$ is a partition of $r-s$. Then the restriction of $L_n(\pi)$ to $k[\fG_n]$ decomposes as $\bigoplus_{\mu} \pi_0\{\mu\}$, where the sum is over partitions $\mu$ of $n-s$ such that $\mu/\lambda \in \HS$.
\end{corollary}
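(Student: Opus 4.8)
The plan is to assemble results already established in the paper; no genuinely new work is needed. By Proposition~\ref{prop:res-group}, the restriction of $L_n(\pi)$ to $k[\fG_n]$ is the parabolic induction $\Ind_{\fP_{r,n-r}}^{\fG_n}(\pi \boxtimes \bone)$. In the notation of \S\ref{ss:repGLn}, the trivial representation $\bone$ of $\fG_{n-r}$ is $\tau(n-r)$, so this parabolic induction is precisely the monoidal product $\pi \odot \tau(n-r)$ in $\bigoplus_{m \ge 0}\Rep(\fG_m)$.

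Next I would substitute $\pi = \pi_0\{\lambda\} = \pi_0 \odot \tau[\lambda]$, so that the restriction becomes $\pi_0\{\lambda\} \odot \tau(n-r)$. Since $\pi_0$ is pure by hypothesis, the Gurevich--Howe Pieri rule (the Proposition of \S\ref{ss:repGLn}, i.e. \cite[Eq.~(5.21)]{GH2}), applied with the ``$n$'' there taken to be $n-r$, gives
\[
\pi_0\{\lambda\} \odot \tau(n-r) = \bigoplus_{\mu/\lambda \in \HS_{n-r}} \pi_0\{\mu\}.
\]

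Finally I would unwind the indexing set. By definition, a partition $\mu$ satisfies $\mu/\lambda \in \HS_{n-r}$ iff $\mu/\lambda$ is a horizontal strip and $|\mu| = |\lambda| + (n-r)$; since $|\lambda| = r-s$, the degree condition reads $|\mu| = n-s$. Hence the direct sum ranges exactly over partitions $\mu$ of $n-s$ with $\mu/\lambda \in \HS$, which is the claimed decomposition.

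As for obstacles, there really are none of substance: the content is carried entirely by Proposition~\ref{prop:res-group} and the Pieri rule of Gurevich--Howe. The only points requiring a line of care are the identification of $\Ind_{\fP_{r,n-r}}^{\fG_n}(-\boxtimes \bone)$ with the monoidal product $-\odot \tau(n-r)$, and the translation of the degree constraint $\HS_{n-r}$ into ``$\mu$ is a partition of $n-s$.'' The boundary case $r=n$ is consistent: then $\tau(0)$ is the trivial representation of the trivial group, the Pieri rule forces $\mu = \lambda$, and the restriction is simply $\pi_0\{\lambda\}$ as expected.
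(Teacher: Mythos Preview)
Your proposal is correct and matches the paper's approach exactly: the paper simply states that combining Proposition~\ref{prop:res-group} with the Pieri rule from \S\ref{ss:repGLn} yields the decomposition, and your write-up just spells out this combination (plus the bookkeeping $|\mu|=|\lambda|+(n-r)=n-s$).
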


\subsection{Restriction to smaller monoids} \label{ss:res-monoid}

Suppose $A$ is a ring and $e \in A$ is an idempotent. We then have an exact restriction functor
\begin{displaymath}
\Mod_A \to \Mod_{eAe}, \qquad M \mapsto eM,
\end{displaymath}
and an induction functor
\begin{displaymath}
\Mod_{eAe} \to \Mod_A, \qquad N \mapsto Ae \otimes_{eAe} N.
\end{displaymath}
The general formalism of these functors is studied in \cite[\S 4]{Steinberg}.

We apply this with $A=k[\fM_n]$ and $e=[e_{n,s}]$. In this case, we have a natural identification of $eAe$ with $k[\fM_s]$. We thus have restriction and induction functors
\begin{displaymath}
\Res_{n,s} \colon \Mod_{k[\fM_n]} \to \Mod_{k[\fM_s]}, \qquad
\Ind_{n,s} \colon \Mod_{k[\fM_s]} \to \Mod_{k[\fM_n]}.
\end{displaymath}
We now examine how they behave on simple modules.

\begin{proposition} \label{prop:res-monoid}
Let $\pi$ be a simple $k[\fG_r]$-module. Then $\Res_{n,s}(L_n(\pi))$ is isomorphic to $L_s(\pi)$ if $s \ge r$, and vanishes if $s<r$.
\end{proposition}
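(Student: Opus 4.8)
The plan is to work with the explicit model of $L_n(\pi)$ from \S\ref{ss:simple} and to track the idempotent $e = [e_{n,s}]$ through it. Recall that $L_n(\pi)$ has underlying vector space $V \otimes k^N$, where $N$ is the number of $r$-dimensional subspaces $U$ of $\bF^n$, the basis vectors $u_U$ of $k^N$ being indexed by such $U$; an element $[m] \in k[\fM_n]$ acts by $\sum \pi(f_{U'}^{-1} m f_U) \otimes E_{U',U}$, the sum over pairs $(U,U')$ of $r$-dimensional subspaces with $m(U) = U'$ (equivalently, $m\restriction_U$ injective). The key elementary observation is that $e_{n,s}$ is the projection of $\bF^n$ onto $V_s = \langle b_1,\dots,b_s\rangle$ with kernel $\langle b_{s+1},\dots,b_n\rangle$; so $e_{n,s}\restriction_U$ is injective iff $U \cap \langle b_{s+1},\dots,b_n\rangle = 0$, in which case $e_{n,s}(U)$ is the $r$-dimensional subspace of $V_s$ to which $U$ projects, and $e_{n,s}\restriction_U = \id_U$ precisely when $U \subseteq V_s$.

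\textbf{The case $s < r$.} Here $\dim\langle b_{s+1},\dots,b_n\rangle = n-s > n-r$, so every $r$-dimensional subspace $U$ meets $\ker e_{n,s}$ nontrivially. Hence $\psi_r([e_{n,s}]) = 0$ by the defining formula for $\psi_r$, and since $L_n(\pi)$ is a $k[\fM_n]$-module through $\psi_r$, the operator $e$ acts as $0$ on it. Therefore $\Res_{n,s}(L_n(\pi)) = e L_n(\pi) = 0$, as claimed.

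\textbf{The case $s \ge r$.} Since $e$ is an idempotent of $k[\fM_n]$, it acts on $L_n(\pi)$ by an idempotent operator. Its image is $\bigoplus_{U \subseteq V_s,\ \dim U = r} V \otimes u_U$: indeed each $e(x \otimes u_{U''})$ lies in $V \otimes u_{e_{n,s}(U'')}$ (or is $0$), and $e_{n,s}(U'') \subseteq V_s$ whenever it is defined, while conversely each summand $V \otimes u_U$ with $U \subseteq V_s$ is fixed by $e$ because $e_{n,s}\restriction_U = \id_U$. Now the $r$-dimensional subspaces of $\bF^n$ contained in $V_s$ are exactly the $r$-dimensional subspaces of $V_s \cong \bF^s$, so $e L_n(\pi)$ is, as a vector space, the underlying space of $L_s(\pi)$. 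Under the identification $eAe \cong k[\fM_s]$ of \S\ref{ss:res-monoid}, an element $m \in \fM_s$ corresponds to $[\hat m]$ with $\hat m = \left(\begin{smallmatrix} m & 0 \\ 0 & 0\end{smallmatrix}\right) \in \fM_n$; for $U \subseteq V_s$ of dimension $r$ we have $\hat m\restriction_U = m\restriction_U$ (viewing $U \subseteq \bF^s$), so $\hat m\restriction_U$ is injective iff $m\restriction_U$ is, and then $\hat m(U) = m(U) \subseteq V_s$. Choosing, for each such $U$, the isomorphism $f_U \colon \bF^r \to U$ used to build $L_s(\pi)$ to be the one already chosen for $L_n(\pi)$, one checks that the formula for the action of $[\hat m]$ on $e L_n(\pi)$ is term-by-term identical to the formula for the action of $[m]$ on $L_s(\pi)$. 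This yields the isomorphism $\Res_{n,s}(L_n(\pi)) \cong L_s(\pi)$.

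\textbf{Main difficulty.} There is no serious obstacle: the whole argument is a matter of bookkeeping in the explicit model. The only points needing a little care are keeping the chosen isomorphisms $f_U$ compatible between $L_n(\pi)$ and $L_s(\pi)$ and verifying that the two module-action formulas agree termwise, together with the (trivial) remark that the $r$-dimensional subspaces of $\bF^n$ lying in $V_s$ are precisely the $r$-dimensional subspaces of $V_s$.
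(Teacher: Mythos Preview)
Your proof is correct and follows essentially the same approach as the paper: both work in the explicit model $V\otimes k^N$ from \S\ref{ss:simple}, identify $eL_n(\pi)$ with the span of the $V\otimes u_U$ for $U\subseteq V_s$, and then match the $k[\fM_s]$-action termwise with that defining $L_s(\pi)$. The only cosmetic difference is in the case $s<r$: the paper observes that $\fa_{n,r-1}$ annihilates $L_n(\pi)$ (so $[e_{n,s}]\in\fa_{n,s}\subseteq\fa_{n,r-1}$ kills it), whereas you compute $\psi_r([e_{n,s}])=0$ directly via the dimension count $r+(n-s)>n$; these are the same observation phrased two ways.
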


\begin{proof}
Use notation as in \S \ref{ss:simple}, and let $M=V \otimes k^N$ be the simple $L_n(\pi)$. If follows immediately from the discussion in \S \ref{ss:simple} that $\fa_{n,r-1}$ annihilates $M$. Thus if $s<r$ then $eM=0$, and the result follows. Assume now that $s \ge r$. Identify $\bF^s$ with a subspace of $\bF^n$ in the usual manner, and label the subspaces of $\bF^n$ so that $U_1, \ldots, U_{N'}$ are exactly the $r$-dimensional subspaces contained in $\bF^s$. It follows from the discussion in \S \ref{ss:simple} that $eM=V \otimes k^{N'}$, which is exactly the space $L_s(\pi)$. The discussion in \S \ref{ss:simple} shows that the action of $k[\fM_s]$ on $eM$ agrees with the action on $L_s(\pi)$. Thus the result follows.
\end{proof}

\begin{corollary}
Let $\pi$ be a a simple $k[\fG_r]$-module, with $r \le s$. Then $\Ind_{n,s}(L_s(\pi))=L_n(\pi)$.
\end{corollary}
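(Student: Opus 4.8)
The plan is to deduce the statement formally from the adjunction between $\Ind_{n,s}$ and $\Res_{n,s}$, combined with the semisimplicity of $k[\fM_n]$ and Proposition~\ref{prop:res-monoid}.

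First I would record the adjunction. With $A = k[\fM_n]$ and $e = [e_{n,s}]$, so that $eAe$ is identified with $k[\fM_s]$, the functor $\Res_{n,s} = e(-)$ is naturally isomorphic to $\Hom_A(Ae,-)$: evaluation at $e$ identifies a left $A$-module map $\phi \colon Ae \to M$ with the element $\phi(e) = e\phi(e) \in eM$. Hence $\Ind_{n,s} = Ae \otimes_{eAe} (-)$ is left adjoint to $\Res_{n,s}$; this is part of the general formalism of \cite[\S 4]{Steinberg}, which I would simply cite.

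Next, for an arbitrary simple $k[\fG_{r'}]$-module $\pi'$ I would compute
\[
\Hom_{k[\fM_n]}\big(\Ind_{n,s}(L_s(\pi)),\, L_n(\pi')\big) \;\cong\; \Hom_{k[\fM_s]}\big(L_s(\pi),\, \Res_{n,s}(L_n(\pi'))\big).
\]
By Proposition~\ref{prop:res-monoid}, $\Res_{n,s}(L_n(\pi'))$ equals $L_s(\pi')$ when $s \ge r'$ and $0$ otherwise. Since $L_s(\pi)$ and $L_s(\pi')$ are simple with endomorphism algebra $k$ (as $k$ is algebraically closed), the right-hand side is $k$ when $\pi' \cong \pi$ and $0$ in every other case. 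Because $k$ has characteristic $0$, $k[\fM_n]$ is semisimple, so $\Ind_{n,s}(L_s(\pi))$ is a direct sum of simple modules and the multiplicity of $L_n(\pi')$ in it is $\dim_k \Hom_{k[\fM_n]}(\Ind_{n,s}(L_s(\pi)), L_n(\pi'))$. Thus $L_n(\pi)$ appears with multiplicity exactly one and no other simple appears, i.e. $\Ind_{n,s}(L_s(\pi)) \cong L_n(\pi)$.

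I do not anticipate a genuine obstacle: the only care needed is the bookkeeping identifying $eAe$ with $k[\fM_s]$ and $\Res_{n,s}$ with $e(-)$, but this has already been set up in \S\ref{ss:res-monoid} and used in Proposition~\ref{prop:res-monoid}, so once that identification is in hand the argument is essentially formal. (Alternatively one could argue directly by computing $\dim_k \Ind_{n,s}(L_s(\pi))$ from the explicit models of \S\ref{ss:simple}, but the adjunction argument is shorter and sidesteps the dimension count.)
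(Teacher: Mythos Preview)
Your proof is correct and follows essentially the same line as the paper's: both invoke the $\Ind_{n,s}\dashv\Res_{n,s}$ adjunction together with Proposition~\ref{prop:res-monoid}. The only difference is in the finishing step: the paper observes that $L_n(\pi)$ is a summand and then cites that the induction of a simple is indecomposable \cite[Corollary~4.10]{Steinberg}, whereas you compute the multiplicity of \emph{every} simple $L_n(\pi')$ directly from semisimplicity; your version is slightly more self-contained since it avoids that external citation.
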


\begin{proof}
Since induction is the left adjoint of restriction, it follows from the proposition that $L_n(\pi)$ is a summand of $\Ind_{n,s}(L_s(\pi))$. On the other hand, the induction is indecomposable \cite[Corollary~4.10]{Steinberg}.
\end{proof}

\subsection{Grothendieck groups} \label{ss:groth}

Let $\rK(k[\fM_n])$ be the Grothendieck group of the category of finite dimensional $k[\fM_n]$-modules. This is the free $\bZ$-module on the classes $[L_n(\pi)]$, where $\pi$ is a simple $k[\fG_r]$-module with $0 \le r \le n$. For $0 \le r \le n$, we have maps
\begin{displaymath}
\rK(k[\fM_n]) \to \rK(k[\fM_r]) \to \rK(k[\fG_r]),
\end{displaymath}
where the first map is the restriction map discussed in \S \ref{ss:res-monoid}, and the second is the restriction map discussed in \S \ref{ss:res-group}. The resulting map
\begin{displaymath}
\rK(k[\fM_n]) \to \prod_{r=0}^n \rK(k[\fG_r]).
\end{displaymath}
is an isomorphism. This is a general result \cite[Theorem~6.5]{Steinberg} from the representation theory of monoids, but Corollary~\ref{cor:RestrictionCombinatorics} and Proposition~\ref{prop:res-monoid} give an explicit description of what this map is in terms of simple modules.

\subsection{Duality} \label{ss:dual}

The monoid $\fM_n$ has an anti-involution given by transpose, which induces an anti-involution on the algebra $k[\fM_n]$. Using this, we can convert right modules to left modules. In particular, if $M$ is a left $k[\fM_n]$-module then the $k$-linear dual $M^*$ is naturally a left $k[\fM_n]$-module. We now determine the effect of this operation on simple modules.

\begin{proposition} \label{prop:dual}
The simple module $L_n(\pi)$ is self-dual.
\end{proposition}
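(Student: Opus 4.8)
The plan is to reduce the claim to the corresponding statement for $\fG_r$, using the restriction functor $\Res_{n,r}$ from \S\ref{ss:res-monoid}. The key observation is that the duality operation $M \mapsto M^*$ (coming from the transpose anti-involution) should commute with $\Res_{n,r}$. Indeed, $e = [e_{n,r}]$ is a transpose-fixed idempotent, so for a left $k[\fM_n]$-module $M$ one checks directly that $(eM)^* \cong e(M^*)$ as $k[\fM_r]$-modules: both are naturally identified with $\{\phi \in M^* : \phi|_{(1-e)M} = 0\}$, with the $k[\fM_r]$-action twisted by transpose. Granting this, and using Proposition~\ref{prop:res-monoid}, which says $\Res_{n,r}(L_n(\pi)) \cong L_r(\pi)$, the problem is reduced to the case $n = r$, i.e. to showing that $L_r(\pi)$ is self-dual as a $k[\fG_r]$-module — equivalently, that $L_n(\pi)$ restricted all the way down to $\fM_r$ and then to $\fG_r$ is self-dual.

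However, $\Res_{n,r}(L_n(\pi))$ for the group is computed in \S\ref{ss:res-group}: restricting $L_n(\pi)$ to $\fG_n$ gives $\Ind_{\fP_{r,n-r}}^{\fG_n}(\pi \boxtimes \bone)$, and one wants self-duality of this. So the cleaner route is: first restrict $L_n(\pi)$ to $k[\fG_n]$ (obtaining the parabolic induction by Proposition~\ref{prop:res-group}), observe that $k$-linear duality on $k[\fM_n]$-modules restricts to the \emph{contragredient} on $k[\fG_n]$-modules (since the transpose anti-involution of $k[\fM_n]$ restricts to $g \mapsto g^{-1}$ composed with an inner automorphism on $\fG_n$ — indeed every $g \in \fG_n$ is conjugate to $\transpose{(g^{-1})}$ is false in general, but $g$ and $\transpose g$ are conjugate, and $\transpose g$ acts as the inverse-transpose... one must be careful here). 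Because $\fG_n$-modules in characteristic $0$ over an algebraically closed field are self-dual precisely when their characters are real-valued, and unipotent-type constituents $\pi_0\{\mu\}$ have this property, one gets that the restriction to $\fG_n$ is self-dual. Then self-duality of the $k[\fM_n]$-module $L_n(\pi)$ follows because, by Proposition~\ref{prop:res-monoid} applied with the isomorphism $\rK(k[\fM_n]) \to \prod_r \rK(k[\fG_r])$ from \S\ref{ss:groth} being injective, a simple $k[\fM_n]$-module is determined up to isomorphism by its image in $\prod_r \rK(k[\fG_r])$; and duality commutes with all the restriction maps in that product, each $\fG_r$-constituent being self-dual.

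So the detailed steps are: (1) verify that $M \mapsto M^*$ commutes with $\Res_{n,s}$ for all $s$, using that $[e_{n,s}]$ is transpose-invariant; (2) identify the effect of $M \mapsto M^*$ on a $k[\fG_r]$-module as the contragredient twisted by the transpose-conjugation automorphism of $\fG_r$, and note this automorphism is inner-equivalent to $g \mapsto \transpose g$, hence $(M^*)|_{\fG_r}$ is the ordinary contragredient of $M|_{\fG_r}$ up to isomorphism; (3) invoke that the relevant $\fG_r$-representations ($\pi_0\{\mu\}$, or more simply all constituents appearing in any $L_n(\pi)$ via Corollary~\ref{cor:RestrictionCombinatorics}) are self-dual, because their characters are real-valued — for unipotent-support pieces this is classical, and in general $\pi$ and its contragredient have the same cuspidal support structure; (4) conclude via injectivity of $\rK(k[\fM_n]) \to \prod_{r=0}^n \rK(k[\fG_r])$ from \S\ref{ss:groth} that $L_n(\pi)^* \cong L_n(\pi)$, since both sides have the same image.

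The main obstacle is step (3): one needs to know that the simple $\fG_r$-modules $\pi$ that arise are self-dual as abstract $\fG_r$-representations, or at least that their contragredients reassemble correctly under the $\psi_r$ decomposition. The contragredient $\pi^\vee$ of $\pi = \sigma_1[\lambda_1]\odot\cdots\odot\sigma_k[\lambda_k]$ is $\sigma_1^\vee[\lambda_1]\odot\cdots\odot\sigma_k^\vee[\lambda_k]$, so $\pi$ need not literally equal $\pi^\vee$ unless the cuspidal supports are stable under the duality $\sigma \mapsto \sigma^\vee$. The saving grace is that $L_n(\pi)$ and $L_n(\pi^\vee)$ restrict to $\fG_r$ (for each $r$) to genuinely different-looking sums unless $\pi \cong \pi^\vee$ — but in fact I expect the transpose anti-involution to send $L_n(\pi)$ to $L_n(\pi^\vee)$, and then \emph{self}-duality requires $\pi \cong \pi^\vee$. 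If that is false for general $\pi$, the proposition as stated would be wrong, so presumably the intended argument shows $\pi \cong \pi^\vee$ always — which is indeed true for $\GL_n$ over a finite field, since every irreducible representation of $\GL_n(\bF_q)$ is self-dual (it is conjugate to its contragredient via the involution $g \mapsto \transpose{g^{-1}}$, and $\transpose{g^{-1}}$ is conjugate to $g$). Making this last point precise — that $\transpose{g^{-1}}$ and $g$ are conjugate in $\fG_n$ — is the crux, and it follows from rational canonical form together with the fact that a companion matrix and its inverse-transpose share a characteristic polynomial only when... actually one argues via the classification of conjugacy classes by rational canonical form and the symmetry $p(t) \leftrightarrow t^{\deg p} p(1/t)$; I would cite this standard fact rather than reprove it.
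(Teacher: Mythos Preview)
Your overall architecture --- pass to the Grothendieck group, use that the restriction maps to $k[\fM_r]$ and then to $k[\fG_r]$ commute with duality (because $e_{n,r}$ is transpose-fixed), and conclude via the isomorphism $\rK(k[\fM_n]) \cong \prod_r \rK(k[\fG_r])$ --- matches the paper exactly. The gap is in your identification of what $M \mapsto M^*$ does on $k[\fG_r]$-modules, and this propagates into a false claim at the end.

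The transpose anti-involution on $k[\fM_n]$ sends $[g]$ to $[\transpose g]$, not to $[g^{-1}]$. So when you restrict to $\fG_r$ the action on $M^*$ is $(g\cdot\lambda)(v)=\lambda(\transpose g\, v)$: this is the \emph{transpose dual}, not the contragredient. The paper's argument is then one line: since $g$ and $\transpose g$ are always conjugate in $\fG_r$ (rational canonical form), $M$ and its transpose dual have the same character and are therefore isomorphic. Duality thus acts trivially on each $\rK(k[\fG_r])$, and the contragredient never enters.

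Your route instead tries to pass through the contragredient and ends up asserting that $\transpose{g^{-1}}$ is conjugate to $g$ for all $g\in\fG_r$, equivalently that every $g$ is conjugate to $g^{-1}$. This is false already for $r=1$: most elements of $\bF_q^\times$ are not their own inverses, and correspondingly a nontrivial character of $\fG_1=\bF_q^\times$ is typically \emph{not} isomorphic to its contragredient. So your step~(3) cannot be completed as stated, and the ``standard fact'' you propose to cite is not a fact. The repair is not to salvage the contragredient argument but to drop it entirely: the duality in play is the transpose dual, and $g\sim\transpose g$ is all you need.
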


\begin{proof}
Suppose $V$ is a representation of the group $\fG_n$. The dual space $V^*$ is then a representation of $\fG_n$ via the formula $(g \lambda)(v)=\lambda(g^{-1} v)$, where here $\lambda \in V^*$, $v \in V$, and $g \in \fG_n$. For the purposes of this proof, we will use a different representation of $\fG_n$ on $V^*$, defined by $(g \lambda)(v)=\lambda(g^t v)$. For any $g \in \fG_n$, the elements $g$ and $g^t$ are conjugate. It follows that $V$ and $V^*$ (with the transpose action) have the same character, and thus are isomorphic representations. Thus duality (defined with transpose) acts trivially on $\rK(\fG_n)$.

The restriction functor $\Mod_{k[\fM_n]} \to \Mod_{k[\fM_r]}$ is compatible with duality, since the idempotent $e_{n,r}$ is its own transpose. The restriction functor $\Mod_{k[\fM_n]} \to \Mod_{k[\fG_n]}$ is also compatible with duality, provided we use the transpose form discussed above. We thus see that the map
\begin{displaymath}
\rK(k[\fM_n]) \to \prod_{r=0}^n \rK(k[\fG_n])
\end{displaymath}
is compatible with duality. Since this is an isomorphism (\S \ref{ss:groth}) and duality is trivial on the target, it follows that duality is trivial on $\rK(k[\fM_n])$, which completes the proof.
\end{proof}

\begin{remark}
It would be interesting to precisely understand how the homomorphism $\psi_r$ from \S \ref{ss:kovacs} interacts with transpose. Similarly, it would be interesting to understand the operation on $k[\cG_{n,r}]$ induced by transpose.
\end{remark}

\subsection{The multiplicity formula}

Let $M$ be a finite dimension $k[\fM_n]$-module. We have a decomposition
\begin{displaymath}
M = \bigoplus_{\pi} L_n(\pi)^{\oplus \bm(\pi)},
\end{displaymath}
and we would like to determine the multiplicities $\bm(\pi)$. Let $M_r=[e_{n,r}] M_n$ be the restriction of $M$ to $k[\fM_r]$, as discussed in \S \ref{ss:res-monoid}. For an irreducible $k[\fG_r]$-module $\pi$, let $\bn(\pi)$ be the multiplicity of $\pi$ in $M_r$, as a $k[\fG_r]$-module. This can, in principle, be determined using the character theory of $\fG_r$. The following theorem allows one to determine $\bm(\pi)$ from the $\bn(\pi)$'s. We let $\VS$ be the set of Young diagrams that are vertical strips, and define $\lambda/\mu \in \VS$ analogously to $\lambda/\mu \in \HS$.

\begin{theorem} \label{thm:multiplicity}
Let $\pi$ be a pure representation of $\fG_r$, and let $\lambda$ be a partition with $r+\vert \lambda \vert \le n$.
\begin{enumerate}
\item We have
\begin{displaymath}
\bn(\pi\{\lambda\})=\sum_{\lambda/\mu \in \HS} \bm(\pi\{\mu\}).
\end{displaymath}
\item We have
\begin{displaymath}
\bm(\pi\{\lambda\})=\sum_{\lambda/\mu \in \VS}(-1)^{\vert \lambda \vert-\vert \mu \vert} \bn(\pi\{\mu\}).
\end{displaymath}
\end{enumerate}
\end{theorem}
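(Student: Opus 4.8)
The plan is to read off part~(a) directly from the simple decomposition of $M$ using Proposition~\ref{prop:res-monoid} and Corollary~\ref{cor:RestrictionCombinatorics}, and then to obtain part~(b) as the formal inverse of the linear system appearing in part~(a).

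For part~(a) I would set $t = r + \vert\lambda\vert \le n$ and work with $M_t = \Res_{n,t}(M)$. By Proposition~\ref{prop:res-monoid}, $\Res_{n,t}$ carries $L_n(\sigma)$ to $L_t(\sigma)$ when $\sigma$ is a simple $k[\fG_s]$-module with $s \le t$ and kills it when $s > t$; hence $M_t \cong \bigoplus_\sigma L_t(\sigma)^{\oplus \bm(\sigma)}$, the sum running over simple $k[\fG_s]$-modules $\sigma$ with $s \le t$. Next I would restrict each summand to $\fG_t$ and invoke Corollary~\ref{cor:RestrictionCombinatorics}: writing $\sigma = \pi_0\{\nu\}$ with $\pi_0$ pure, the $\fG_t$-module $L_t(\sigma)\restriction_{\fG_t}$ decomposes as $\bigoplus_\kappa \pi_0\{\kappa\}$ over partitions $\kappa$ with $\kappa/\nu \in \HS$ of the appropriate size, each appearing once. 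By the uniqueness of the expression of an irreducible of $\fG_t$ as (pure part)$\{$partition$\}$ recalled in \S\ref{ss:repGLn}, a summand $\pi_0\{\kappa\}$ is isomorphic to $\pi\{\lambda\}$ exactly when $\pi_0 \cong \pi$ and $\kappa = \lambda$. Thus the $\sigma$ that contribute a copy of $\pi\{\lambda\}$ are precisely the modules $\pi\{\mu\}$ with $\lambda/\mu \in \HS$ (and since $\mu \subseteq \lambda$ forces $r + \vert\mu\vert \le n$, each such $\pi\{\mu\}$ really is a $k[\fM_n]$-simple), each contributing multiplicity one. Counting multiplicities of $\pi\{\lambda\}$ in $M_t\restriction_{\fG_t}$ then gives $\bn(\pi\{\lambda\}) = \sum_{\lambda/\mu \in \HS}\bm(\pi\{\mu\})$.

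For part~(b) I would fix $\pi$ and regard part~(a) as a $\bZ$-linear system relating the vectors $(\bm(\pi\{\mu\}))_\mu$ and $(\bn(\pi\{\lambda\}))_\lambda$, indexed by partitions with at most $n - r$ boxes. Since $\lambda/\mu \in \HS$ forces $\mu \subseteq \lambda$, this system is triangular with respect to $\vert\cdot\vert$ with $1$'s on the diagonal, hence invertible over $\bZ$. To identify the inverse, note that under the identification $\mu \leftrightarrow s_\mu$ the operator $\mu \mapsto \sum_{\lambda/\mu \in \HS}\lambda$ is multiplication by $H = \sum_{k \ge 0} h_k$ (Pieri's rule), while $\mu \mapsto \sum_{\lambda/\mu \in \VS}(-1)^{\vert\lambda\vert - \vert\mu\vert}\lambda$ is multiplication by $E = \sum_{k \ge 0}(-1)^k e_k$ (the dual Pieri rule). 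Since $HE = 1$ in the ring of symmetric functions — the specialization at $t = 1$ of $H(t)E(-t) = 1$, equivalently $\sum_{i=0}^{d}(-1)^i e_i h_{d-i} = 0$ for $d \ge 1$ — multiplication by $E$ inverts multiplication by $H$, which yields $\bm(\pi\{\lambda\}) = \sum_{\lambda/\mu \in \VS}(-1)^{\vert\lambda\vert - \vert\mu\vert}\bn(\pi\{\mu\})$.

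The substantive step is the bookkeeping in part~(a): pinning down, via Proposition~\ref{prop:res-monoid} and Corollary~\ref{cor:RestrictionCombinatorics} together with uniqueness of the pure decomposition, exactly which simples $L_n(\sigma)$ contribute a copy of $\pi\{\lambda\}$ after the two successive restrictions. Once part~(a) is established, part~(b) is the standard combinatorial inversion of the horizontal-strip operator by the signed vertical-strip operator, and presents no real obstacle.
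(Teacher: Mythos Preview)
Your proposal is correct and follows essentially the same approach as the paper: part~(a) is obtained by combining Proposition~\ref{prop:res-monoid} with Corollary~\ref{cor:RestrictionCombinatorics} (the paper reduces to the case of simple $M$ whereas you work with the full decomposition at once, but this is cosmetic), and part~(b) is the same symmetric-function inversion, identifying the horizontal-strip and signed vertical-strip operators with multiplication by $\sum_k h_k$ and $\sum_k (-1)^k e_k$ respectively and invoking the identity $\sum_i (-1)^i e_i h_{d-i} = \delta_{d,0}$.
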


\begin{proof}
(a) It suffices to prove the formula when $M$ is a simple module. Thus suppose $M=L_n(\sigma)$, where $\sigma$ is a simple $k[\fG_t]$-module, and write $\sigma=\sigma_0\{\rho\}$ for some pure representation $\sigma_0$ and partition $\rho$. By Proposition~\ref{prop:res-monoid}, we have $M_s=L_s(\sigma)$ if $s \ge t$, and $M_s=0$ otherwise. If $\sigma_0 \ne \pi$ then both sides vanish. Indeed, this is clear for the right side, and follows from Corollary~\ref{cor:RestrictionCombinatorics} (applied to the $M_s$) for the left side.

Suppose now that $\sigma_0=\pi$. The right side is~1 if $\lambda/\rho \in \HS$, and vanishes otherwise. Let $s=r+\vert \lambda \vert$. If $\vert \lambda \vert<\vert \rho \vert$ then $M_s=0$, and so both sides clearly vanish. Suppose then that $\vert \lambda \vert \ge \vert \rho \vert$, so that $M_s=L_s(\pi\{\rho\})$. By Corollary~\ref{cor:RestrictionCombinatorics}, $M_s$ decomposes as a $k[\fG_s]$-module into $\bigoplus \pi\{\nu\}$ where $\nu/\rho \in \HS$ and $\vert \nu \vert=\vert \lambda \vert$. We thus see that the left side is~1 if $\lambda/\rho \in \HS$, and vanishes otherwise. This proves the formula.

(b) This follows from (a) and the subsequent lemma.
\end{proof}

Let $\cP$ be the set of all partitions, and let $V$ be the space of $k$-valued functions on $\cP$. Given $\phi \in V$, define new functions $A(\phi)$ and $B(\phi)$ in $V$ by
\begin{displaymath}
(A\phi)(\lambda) = \sum_{\lambda/\mu \in \HS} \phi(\mu), \qquad
(B\phi)(\lambda) = \sum_{\lambda/\mu \in \VS} (-1)^{\vert \lambda \vert-\vert \mu \vert} \phi(\mu).
\end{displaymath}
Thus $A$ and $B$ are linear operators on the space $V$.

\begin{lemma}
The operators $A$ and $B$ are inverse to one another.
\end{lemma}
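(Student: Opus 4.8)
The plan is to identify $V$ with the degreewise completion of the ring of symmetric functions and recognize $A$ and $B$ as multiplication by the two universal series $\sum_{d\ge 0} h_d$ and $\sum_{d\ge 0}(-1)^d e_d$, which are mutually inverse by the classical relation between complete and elementary symmetric functions. (Here $h_d$ and $e_d$ denote the complete homogeneous and elementary symmetric functions of degree $d$.)

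Concretely, let $\Lambda=\bigoplus_{n\ge 0}\Lambda_n$ be the ring of symmetric functions over $k$, graded by degree, with the Schur functions $\{s_\lambda\}_{\lambda\in\cP}$ as a homogeneous $k$-basis, and let $\widehat\Lambda=\prod_{n\ge 0}\Lambda_n$ be its degreewise completion; multiplication on $\widehat\Lambda$ is well defined since in each fixed degree the product is a finite sum. Sending $\phi\in V$ to $\Theta(\phi)=\sum_{\lambda\in\cP}\phi(\lambda)\,s_\lambda$ defines a $k$-linear bijection $\Theta\colon V\to\widehat\Lambda$. I would then check that $\Theta$ conjugates $A$ and $B$ into multiplication operators. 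For $A$, interchanging the order of summation and using the symmetric-function incarnation $s_\mu h_d=\sum_{\lambda/\mu\in\HS_d}s_\lambda$ of the classical Pieri rule from \S\ref{ss:repGLn}, one gets
\[
\Theta(A\phi)=\sum_{\mu}\phi(\mu)\sum_{\lambda/\mu\in\HS}s_\lambda
=\sum_{\mu}\phi(\mu)\,s_\mu\Big(\sum_{d\ge 0}h_d\Big)=H\cdot\Theta(\phi),
\]
where $H=\sum_{d\ge 0}h_d\in\widehat\Lambda$; all rearrangements are legitimate because for a fixed $\lambda$ only the finitely many $\mu\subseteq\lambda$ contribute. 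Likewise, using the classical Pieri rule $s_\mu e_d=\sum_{\lambda/\mu\in\VS_d}s_\lambda$ for elementary symmetric functions, one gets $\Theta(B\phi)=E\cdot\Theta(\phi)$ with $E=\sum_{d\ge 0}(-1)^d e_d\in\widehat\Lambda$.

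It then remains to invoke the classical identity $\sum_{i+j=m}(-1)^j h_i e_j=\delta_{m,0}$ — equivalently $\big(\sum_i h_i t^i\big)\big(\sum_j(-1)^j e_j t^j\big)=1$ — which, read off degree by degree in $\widehat\Lambda$, says exactly $H\cdot E=1$. Since $\widehat\Lambda$ is commutative, multiplication by $H$ and by $E$ are mutually inverse $k$-linear automorphisms of $\widehat\Lambda$, and transporting back through $\Theta$ gives $AB=BA=\id_V$, as desired.

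I do not anticipate a serious obstacle; the only point requiring care is bookkeeping with the completion $\widehat\Lambda$ — one must verify that the infinite sums defining $\Theta(\phi)$, $H$, $E$ and the products $H\cdot\Theta(\phi)$ and $H\cdot E$ all make sense, which they do because everything is graded and $A$, $B$ are nonnegative-degree operators with degree-$0$ part the identity. An alternative, entirely self-contained route avoids symmetric functions: compute $(BA\phi)(\lambda)$ by swapping sums, reducing the claim to the combinatorial identity stating that $\sum_{\nu}(-1)^{|\lambda|-|\nu|}=\delta_{\lambda,\mu}$, the sum over $\nu$ with $\mu\subseteq\nu\subseteq\lambda$, $\nu/\mu\in\HS$ and $\lambda/\nu\in\VS$; but that identity is itself cleanest to prove via the $h$–$e$ relation, so the symmetric-function formulation is preferable.
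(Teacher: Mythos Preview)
Your proof is correct and is essentially identical to the paper's own argument: both identify $V$ with the completion $\hat\Lambda$ of the ring of symmetric functions via $\phi\mapsto\sum_\lambda\phi(\lambda)s_\lambda$, use the Pieri rules to recognize $A$ and $B$ as multiplication by $\sum_d h_d$ and $\sum_d(-1)^d e_d$ respectively, and then invoke the classical identity that these series are mutually inverse.
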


\begin{proof}
Let $\Lambda$ be the ring of symmetric functions with coefficients in $k$. This is a $k$-vector space with a basis given by Schur functions $s_{\lambda}$, for $\lambda \in \cP$. Let $\hat{\Lambda}$ be the completion of $\Lambda$ with respect to the ideal of positive degree elements; thus an element of $\hat{\Lambda}$ is a formal $k$-linear combination of $s_{\lambda}$'s. We have an isomorphism $F \colon V \to \hat{\Lambda}$ by $F(\phi)=\sum_{\lambda} \phi(\lambda) s_{\lambda}$.

Let $h_n$ and $e_n$ be the usual complete and elementary symmetric functions. Put $h=\sum_{n \ge 0} h_n$ and $e=\sum_{n \ge 0} (-1)^n e_n$. We now examine operators on $\hat{\Lambda}$ given by multiplication by $h$ and $e$. By the Pieri rule, we have
\begin{displaymath}
h \cdot s_{\lambda} = \sum_{\mu/\lambda \in \HS} s_{\mu}, \qquad
e \cdot s_{\lambda} = \sum_{\mu/\lambda \in \VS} (-1)^{\vert \mu \vert - \vert \lambda \vert} s_{\mu}
\end{displaymath}
and so
\begin{displaymath}
h \cdot \sum_{\lambda} a_{\lambda} s_{\lambda} = \sum_{\mu} \big( \sum_{\mu/\lambda \in \HS} a_{\lambda} \big) s_{\mu}, \qquad
e \cdot \sum_{\lambda} a_{\lambda} s_{\lambda} = \sum_{\mu} \big( \sum_{\mu/\lambda \in \VS} (-1)^{\vert \mu \vert - \vert \lambda \vert}  a_{\lambda} \big) s_{\mu}.
\end{displaymath}
This shows that $F(A(\phi))=h F(\phi)$ and $F(B(\phi))=e F(\phi)$. The result now follows from the standard identity $eh=1$.
\end{proof}

\section{Schur--Weyl duality}\label{s:schur-weyl}

In this section, we investigate an analog of Schur--Weyl duality for $k[\fM_n]$. The reader should consult \cite[\S 5]{Solomon3} for an similar analog for the rook monoid.

\subsection{The main result}

The monoid $\fM_n$ acts on the set $\bF^n$, and so $V=k[\bF^n]$ is naturally a $k[\fM_n]$-module. We now formulate a Schur--Weyl duality statement for matrix monoids acting on tensor powers $V^{\otimes m}$. 
 
First observe that we have natural identifications
\begin{displaymath}
V^{\otimes m} = k[\bF^n]^{\otimes m} = k[\bF^n \times \bF^n \times \cdots \times \bF^n] = k[\fM_{n,m}]
\end{displaymath}
where there are $m$ factors in the product, and $\fM_{n,m}$ is the set of $n \times m$ matrices over $\bF$. The space $k[\fM_{n,m}]$ carries a natural right module structure for the algebra $k[\fM_m]$, which commutes with the left action of $k[\fM_n]$. The following is our Schur--Weyl theorem:

\begin{theorem} \label{thm:doublecentralizer}
The images of $k[\fM_n]$ and $k[\fM_m]$ in $\End(k[\fM_{n,m}])$ are centralizers of each other.
\end{theorem}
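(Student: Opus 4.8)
The plan is to compute the algebra $\End_{k[\fM_n]}(M)$, where $M=k[\fM_{n,m}]$, show that it coincides with the image $\bar B$ of $k[\fM_m]$ in $\End_k(M)$, and then deduce the reverse statement from the double centralizer theorem, which applies because $k[\fM_n]$ is semisimple by Kov\'acs' theorem. Note at the outset that the left $k[\fM_n]$-action and right $k[\fM_m]$-action, being left and right matrix multiplication, commute, so $\bar B\subseteq\End_{k[\fM_n]}(M)$; hence it suffices to match dimensions.

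First I would decompose $M$ as a $k[\fM_n]$-module using Proposition~\ref{prop:structure}. Let $M_{\le r}$ be the span of the matrices of rank $\le r$; this is a $k[\fM_n]$-submodule, $\fa_{n,r-1}$ annihilates $M^{(r)}:=M_{\le r}/M_{\le r-1}$, and, since every rank-$\le r$ matrix $B\in\fM_{n,m}$ equals $gB$ for a rank-$r$ idempotent $g\in\fM_n$, the idempotent $\epsilon_r$ acts on $M$ as the projection onto $M_{\le r}$. Thus $M\cong\bigoplus_{r=0}^{\min(n,m)}M^{(r)}$ as $k[\fM_n]$-modules, with the $r$-th summand a $k[\cG_{n,r}]$-module supported in the $r$-th factor of $k[\fM_n]\cong\prod_r k[\cG_{n,r}]$; here $M^{(r)}$ has basis the rank-exactly-$r$ matrices, $[f\colon U\to U']$ acts by $[A]\mapsto[f\circ A]$ when $\im A=U$ and by $0$ otherwise, and the commuting right $k[\fM_m]$-action descends similarly.

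Next I would apply Morita equivalence to each block. The idempotent $e=[\id_{\bF^r}]$ attached to the standard subspace $\bF^r\subseteq\bF^n$ is full in $k[\cG_{n,r}]$ with $e\,k[\cG_{n,r}]\,e\cong k[\fG_r]$, and under the equivalence $M^{(r)}$ corresponds to $eM^{(r)}$, the span of the surjections $\bF^m\twoheadrightarrow\bF^r$ with $\fG_r$ acting freely by post-composition. Two such surjections lie in the same orbit exactly when they have the same kernel, so $eM^{(r)}$ is a free $k[\fG_r]$-module of rank $\qbinom{m}{r}$ (the number of $(m-r)$-dimensional subspaces of $\bF^m$), whence
\begin{displaymath}
\End_{k[\fM_n]}(M)\;\cong\;\prod_{r=0}^{\min(n,m)}\End_{k[\fG_r]}\!\bigl(k[\fG_r]^{\oplus\qbinom{m}{r}}\bigr)\;\cong\;\prod_{r=0}^{\min(n,m)}\rM_{\qbinom{m}{r}}\!\bigl(k[\fG_r]\bigr),
\end{displaymath}
using $k[\fG_r]^{\mathrm{op}}\cong k[\fG_r]$; in particular $\dim_k\End_{k[\fM_n]}(M)=\sum_{r=0}^{\min(n,m)}\bigl(\qbinom{m}{r}\bigr)^{2}|\fG_r|$. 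For $\bar B$ I would use $k[\fM_m]\cong\prod_{s=0}^m\rM_{\qbinom{m}{s}}(k[\fG_s])$: the blocks with $s>\min(n,m)$ annihilate $M$ since $\fM_{n,m}$ has no rank-$s$ matrices, while transposing matrices identifies $M$ as a right $k[\fM_m]$-module with $k[\fM_{m,n}]$ as a left $k[\fM_m]$-module, so the same Morita computation with $n,m$ swapped shows the blocks with $s\le\min(n,m)$ act through a nonzero free $k[\fG_s]$-module, hence faithfully. Therefore $\dim_k\bar B=\sum_{s=0}^{\min(n,m)}\bigl(\qbinom{m}{s}\bigr)^{2}|\fG_s|=\dim_k\End_{k[\fM_n]}(M)$, so $\bar B=\End_{k[\fM_n]}(M)$; that is, the image of $k[\fM_m]$ is the centralizer of the image of $k[\fM_n]$. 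The double centralizer theorem, applied to the semisimple algebra $k[\fM_n]$ acting on $M$, then gives that the image of $k[\fM_n]$ is the centralizer of $\bar B$, which completes the argument.

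The main obstacle is the bimodule bookkeeping: correctly tracking how the left and right actions interact with the rank filtration and with \emph{both} Morita reductions, and in particular verifying faithfulness of the relevant blocks of $k[\fM_m]$ — the point where the (elementary) freeness of the $k[\fG_r]$-module of surjections $\bF^m\twoheadrightarrow\bF^r$ does the real work. Everything else reduces to dimension counting and standard facts about semisimple algebras.
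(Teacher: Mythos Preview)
Your argument is correct, but it takes a quite different route from the paper's. The paper proves the theorem by an elementary orbit--stabilizer computation: assuming $n\ge m$, every element of $\fM_{n,m}$ lies in the $\fM_n$-orbit of the single matrix $S=\begin{pmatrix} I_m\\ 0\end{pmatrix}$, so a $k[\fM_n]$-equivariant endomorphism is determined by the image of $[S]$; since $S$ is fixed by $T=e_{n,m}$, that image is a combination of matrices $\begin{pmatrix} A\\ 0\end{pmatrix}$ with $A\in\fM_m$, each of which equals $S\cdot A$. Semisimplicity then gives the reverse inclusion. No block decomposition, Morita theory, or dimension count is needed.

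Your approach is more structural: you decompose $M$ by rank via Kov\'acs' idempotents, pass through the Morita equivalence $k[\cG_{n,r}]\sim k[\fG_r]$ to identify each $\End_{k[\fM_n]}(M^{(r)})$ with a matrix algebra over $k[\fG_r]$, and then match dimensions with the image of $k[\fM_m]$. The payoff is that you effectively establish the explicit Schur--Weyl decomposition $\End_{k[\fM_n]}(M)\cong\prod_{r\le\min(n,m)}\rM_{\qbinom{m}{r}}(k[\fG_r])$ along the way, which the paper records separately as Proposition~\ref{prop:SWdecomposition}. The paper's argument is shorter and uses less machinery for the bare double-centralizer statement; yours requires the full structure theorem but packages the centralizer computation and the simple decomposition into a single calculation.
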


\begin{proof}
Since $k[\fM_n]$ and $k[\fM_{m}]$ are semi-simple, it suffices just to show $k[\fM_m]$ generates the centralizer of $k[\fM_n]$ on $k[\fM_{n,m}]$. Moreover, taking the transposes if necessary, we may assume without loss of generality that $n \ge m$.

A $k[\fM_n]$-equivariant endomorphism of  $k[\fM_{n,m}]$  is completely determined by what it does to the matrix
\begin{displaymath}
S = 
\begin{pmatrix}
I_{m} \\
 0_{(n-m) \times m}
\end{pmatrix}
\end{displaymath}
since any other matrix is in the $\fM_n$-orbit of this matrix. This matrix $S$ is fixed by 
\begin{displaymath}
T=e_{m,n}=
\begin{pmatrix}
I_{m} & \\
 & 0_{n-m}
\end{pmatrix} \in  \fM_n,
\end{displaymath}
so under an equivariant map it can only be sent to something else fixed by $T$.  The matrix $T$ acts on $k[\fM_{n,m}]$ as projection onto the space of linear combinations of matrices of the form
\begin{displaymath}
\begin{pmatrix}
A \\ 0_{(n-m) \times m}
\end{pmatrix}
\end{displaymath}
so we see that $S$ must get sent to a linear combination of matrices of this form.  Clearly though the matrix above can be obtained from the starting matrix $S$ by multiplying on the right by $A \in \fM_{m}$. So we see that indeed any $k[\fM_{n}]$-equivariant endomorphism can be realized by a linear combination of elements of $\fM_{m}$, as desired.
\end{proof}

\subsection{Decomposition into simples}

We have been treating $k[\fM_{n,m}]$ as a $(k[\fM_n], k[\fM_m])$-bimodule. Composing with transpose on the second algebra (as in \S \ref{ss:dual}), we now treat it as a left module for $k[\fM_n] \otimes k[\fM_m]$. As is typical for Schur--Weyl results, we can decompose $k[\fM_{n,m}]$ into simple modules.

\begin{proposition} \label{prop:SWdecomposition}
We have a decomposition of left $(k[\fM_{n}] \otimes k[\fM_{m}])$-modules
\begin{displaymath}
k[\fM_{n,m}] \cong \bigoplus L_n(\pi) \otimes L_m(\pi)
\end{displaymath}
where $\pi$ runs over all irreducible $\fG_r$-representations for $r \le \min(n,m)$. 
\end{proposition}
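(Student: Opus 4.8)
The plan is to combine the double centralizer theorem with the submonoid restriction functors of \S\ref{ss:res-monoid}. Since $k[\fM_n]$ and $k[\fM_m]$ are semisimple (Kov\'acs' theorem, cf.\ Proposition~\ref{prop:structure}), their images in $\End(k[\fM_{n,m}])$ are semisimple, and by Theorem~\ref{thm:doublecentralizer} these images are mutual centralizers. The double centralizer theorem therefore gives a decomposition of left $(k[\fM_n]\otimes k[\fM_m])$-modules
\[
k[\fM_{n,m}] \;\cong\; \bigoplus_{\pi\in\Sigma} L_n(\pi)\boxtimes M_\pi,
\]
where $\Sigma$ indexes the simple $k[\fM_n]$-modules $L_n(\pi)$ occurring in $k[\fM_{n,m}]$ (so each $\pi$ is an irreducible representation of some $\fG_r$), each $M_\pi$ is a simple $k[\fM_m]$-module, and $\pi\mapsto M_\pi$ is injective. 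It remains to show that $\Sigma$ consists of exactly the irreducibles of $\fG_r$ for $r\le\min(n,m)$, and that $M_\pi\cong L_m(\pi)$.

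The statement is symmetric under transpose, so we may assume $n\ge m$. The key move is to apply $\Res_{n,m}$ to the first tensor factor, i.e.\ to multiply by the idempotent $[e_{n,m}]\in k[\fM_n]$. On the one hand, $[e_{n,m}]\cdot k[\fM_{n,m}]$ is spanned by the matrices supported in the top $m$ rows, and a direct check identifies it, as a bimodule for $[e_{n,m}]k[\fM_n][e_{n,m}]\cong k[\fM_m]$ on the left and $k[\fM_m]$ on the right, with the \emph{regular} $(k[\fM_m],k[\fM_m])$-bimodule $k[\fM_m]$. Using the Wedderburn decomposition of $k[\fM_m]$ together with the self-duality of the modules $L_m(\sigma)$ from Proposition~\ref{prop:dual} --- recall that here the second factor acts through the transpose anti-involution --- this regular bimodule is $\bigoplus_\sigma L_m(\sigma)\boxtimes L_m(\sigma)$, the sum running over all simple $k[\fM_m]$-modules. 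On the other hand, applying $\Res_{n,m}$ to the displayed decomposition and invoking Proposition~\ref{prop:res-monoid} --- which gives $\Res_{n,m}L_n(\pi)\cong L_m(\pi)$ when $\pi$ is an irreducible of $\fG_r$ with $r\le m$, and $0$ otherwise --- yields $[e_{n,m}]\cdot k[\fM_{n,m}]\cong\bigoplus_{\pi\in\Sigma,\,r\le m}L_m(\pi)\boxtimes M_\pi$.

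Comparing these two expressions as decompositions into simple $(k[\fM_m]\otimes k[\fM_m])$-modules, uniqueness of such a decomposition forces $\{\pi\in\Sigma : r\le m\}$ to be precisely the set of all irreducibles of $\fG_r$ with $r\le m$, and $M_\pi\cong L_m(\pi)$ for each of these. Since every $M_\pi$ is a simple $k[\fM_m]$-module, $\pi\mapsto M_\pi$ is injective, and every simple $k[\fM_m]$-module has already been accounted for, there can be no $\pi\in\Sigma$ with $r>m$; hence $\Sigma$ is exactly the set of irreducibles of $\fG_r$ for $r\le m=\min(n,m)$, and $M_\pi\cong L_m(\pi)$ throughout. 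The point I expect to require the most care is the identification of $[e_{n,m}]\cdot k[\fM_{n,m}]$ with the regular bimodule of $k[\fM_m]$ carrying the correct transpose twist on the second factor (so that Proposition~\ref{prop:dual} is what produces $L_m(\sigma)$ rather than its dual), together with the observation that this single restriction already pins down both $\Sigma$ and the pairing, so that no separate treatment of the top rank $r=m$ --- via induction, or a dimension count such as $\sum_{r\le\min(n,m)}\qbinom nr\qbinom mr\,|\fG_r|=q^{nm}$ --- is needed.
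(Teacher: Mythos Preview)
Your argument is correct and uses the same core ingredients as the paper---the idempotent restriction of Proposition~\ref{prop:res-monoid}, the self-duality of Proposition~\ref{prop:dual}, and the Wedderburn decomposition of a regular bimodule---but the logic runs in the opposite direction. The paper does not invoke Theorem~\ref{thm:doublecentralizer} here at all: it starts from the regular bimodule decomposition $k[\fM_n]\cong\bigoplus_\pi L_n(\pi)\otimes L_n(\pi)$ (this is your step~6 with $m$ replaced by $n$) and then applies the idempotent $1\otimes[e_{n,m}]$ to the \emph{second} factor, directly producing $k[\fM_{n,m}]$ on the left and $\bigoplus L_n(\pi)\otimes L_m(\pi)$ on the right. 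Your route instead applies $[e_{n,m}]\otimes 1$ to the \emph{first} factor of an abstract double-centralizer decomposition of $k[\fM_{n,m}]$, reducing to $k[\fM_m]$ and then matching; the injectivity of $\pi\mapsto M_\pi$ is what lets you rule out contributions with $r>m$. The paper's version is slightly shorter since it bypasses both the double centralizer theorem and the pigeonhole step at the end, but your approach has the virtue of making the role of Theorem~\ref{thm:doublecentralizer} explicit.
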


\begin{proof}
For any semi-simple $k$-algebra $A$, we have a natural isomorphism of $(A,A)$-bimodules
\begin{displaymath}
A = \bigoplus L \otimes L^*,
\end{displaymath}
where the sum is over simple left $A$-modules $L$, and $L^*$ is regarded as a right $A$-module. Applying this with $A=k[\fM_n]$, converting right modules to left modules via transpose, and using Proposition~\ref{prop:dual}, we obtain an isomorphism of left $(k[\fM_n] \otimes k[\fM_n])$-modules
\begin{equation} \label{eq:SWdecomposition}
k[\fM_n] \cong \bigoplus L_n(\pi) \otimes L_n(\pi),
\end{equation}
where $\pi$ runs over all irreducible $\fG_r$-representations for $r \le n$. Recall the idempotent $[e_{n,m}] \in k[\fM_n]$ from \S \ref{ss:res-monoid}. We now apply the idempotent $1 \otimes [e_{n,m}]$ to \eqref{eq:SWdecomposition}. A direct computation shows that
\begin{displaymath}
(1 \otimes [e_{n,m}]) k[\fM_n] \cong k[\fM_{n,m}]
\end{displaymath}
as left $(k[\fM_n] \otimes k[\fM_m])$-modules. Proposition~\ref{prop:res-monoid} computes the result of applying the idempotent to the right side of \eqref{eq:SWdecomposition}. This completes the proof.
\end{proof}

\begin{remark}
	Proposition~\ref{prop:SWdecomposition} is closely related to the theta correspondence for type II. The theta correspondence for type II concerns the $\GL_n\left(F\right) \times \GL_m\left(F\right)$ action on $L^2\left(M_{n \times m}\left(F\right)\right)$ given by $\rho\left(g,h\right)f\left(X\right) = f\left(\transpose{g} X h\right)$. Here $F$ is usually taken to be a local field. One can replace $\transpose{g}$ with $g^{-1}$ by composing with the outer automorphism of $\GL_n\left(\bF\right)$ given by $g \mapsto \transpose{\left(g^{-1}\right)}$. For an irreducible representation $\tau$ of $\GL_n\left(F\right)$ one considers the big theta space 
	\begin{displaymath}
		\Theta\left(\tau\right) = L^2\left(M_{n \times m}\left(F\right)\right) \slash \bigcap \ker f,
	\end{displaymath}
	where the intersection is over all elements $f \in \Hom_{\GL_n\left(F\right) \times \left\{1\right\}}\left(L^2\left(M_{n \times m}\left(F\right)\right), \tau\right)$.
	The group $\GL_m\left(F\right)$ acts on this space via $h \mapsto \rho\left(I_n, h\right)$. In the local field setting, the space $\Theta\left(\tau\right)$ is of finite length and admits a unique irreducible quotient called the small theta space, denoted $\theta\left(\tau\right)$. The assignment $\tau \mapsto \theta\left(\tau\right)$ has nice properties. In particular, it satisfies Howe reciprocity: the assignment $\tau \mapsto \theta\left(\tau\right)$ is injective if we restrict to $\tau$ such that $\theta\left(\tau\right)$ is not zero. For the type II case we concern, we also have that $\theta\left(\tau\right)$ is always non-zero if $n \le m$. One should think about this correspondence as an approximation for the decomposition of the space
	\begin{displaymath}
		L^2\left(M_{n \times m}\left(F\right)\right) \approx \bigoplus_{\tau \in \mathrm{Irr}\left(\GL_n\left(F\right)\right)} \tau \otimes \theta\left(\tau\right).
	\end{displaymath}
	We refer the reader to the introduction of \cite{CLLTZ} and the references within for more details.
	
	In \cite{GH2}, Gurevich--Howe studied the finite field analog of theta correspondence of type II. Suppose that $F = \bF$ is a finite field with $q$ elements, and let $L^2\left(M_{n \times m}\left(\bF\right)\right)$ be the space consisting of all functions $f \colon M_{n \times m}\left(F\right) \to \bC$. In this case, the big theta space can be identified with
	\begin{displaymath}
		\Theta\left(\tau\right) = \Hom_{\GL_n\left(F\right) \times \left\{1\right\}}\left(\tau \otimes 1, L^2\left(M_{n \times m}\left(\bF\right)\right)\right)
	\end{displaymath}
	with the $\GL_m\left(\bF\right)$-action on this space given by sending $h \in \GL_m\left(\bF\right)$ to left composition with $\rho\left(I_n, h\right)$. In this case, the big theta space $\Theta\left(\tau\right)$ does not have a unique irreducible quotient unless it is irreducible. Gurevich--Howe defined a substitute for $\theta\left(\tau\right)$, which they denoted $\eta\left(\tau\right)$, by pinpointing an interesting irreducible subrepresentation of $\Theta\left(\tau\right)$. In the case in hand, we may identify the $\GL_n\left(\bF\right) \times \GL_m\left(\bF\right)$ representation of $L^2\left(M_{n \times m}\left(\bF\right)\right)$ with $\bC$-linear combinations of elements of $M_{m \times n}\left(\bF\right)$ (up to composition with the outer automorphism) by sending a function $f$ to the linear combination $\sum_{X} f\left(X\right) \left[\transpose{X}\right]$.
	
	Thus Proposition~\ref{prop:SWdecomposition} can be viewed as an version of the theta correspondence of type II for monoid algebras (instead of group algebras). Interestingly, in the case in hand, the big theta space $\Theta\left(L_n\left(\pi\right)\right) = L_m\left(\pi\right)$ is always irreducible and we have an equality for the decomposition of the space instead of an approximation. This suggests that the theta correspondence for groups is a ``shadow'' of the theta correspondence for monoid algebras.
\end{remark}

Returning to the basic representation $V=k[\bF^n]$ of $k[\fM_n]$, we obtain the following result.

\begin{corollary}
Let $m \ge 0$. We have a decomposition of $k[\fM_n]$-modules
\begin{displaymath}
V^{\otimes m} = \bigoplus_{r = 0}^{\min\left(n,m\right)} \bigoplus_{\pi} L_n(\pi)^{\oplus \bm(\pi)}
\end{displaymath}
where $\pi$ runs over irreducible $\fG_r$-representations and
\begin{displaymath}
\bm(\pi) = \dim L_m(\pi) = \qbinom{m}{r} \cdot \dim{\pi}.
\end{displaymath}
\end{corollary}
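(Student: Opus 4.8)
The plan is to deduce this directly from Proposition~\ref{prop:SWdecomposition} together with the explicit model of the simple modules $L_m(\pi)$ from \S\ref{ss:simple}. First I would recall the identification, made at the start of \S\ref{s:schur-weyl}, of $V^{\otimes m}$ with $k[\fM_{n,m}]$ as a $k[\fM_n]$-module (this is the restriction to the first tensor factor of the $(k[\fM_n],k[\fM_m])$-bimodule structure; one only needs the left action here). By Proposition~\ref{prop:SWdecomposition}, as a left $(k[\fM_n]\otimes k[\fM_m])$-module one has
\begin{displaymath}
k[\fM_{n,m}] \cong \bigoplus_{r=0}^{\min(n,m)} \bigoplus_{\pi} L_n(\pi) \otimes L_m(\pi),
\end{displaymath}
with $\pi$ running over irreducible $\fG_r$-representations.

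Next I would forget the $k[\fM_m]$-action. Since $k[\fM_n]$ acts only on the first tensor factor of $L_n(\pi)\otimes L_m(\pi)$, that summand is isomorphic, as a $k[\fM_n]$-module, to $L_n(\pi)^{\oplus \dim L_m(\pi)}$, the second factor contributing only a multiplicity space. This already gives the stated decomposition with $\bm(\pi)=\dim L_m(\pi)$, and the index range $r\le\min(n,m)$ is exactly the one in Proposition~\ref{prop:SWdecomposition}.

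It remains to compute $\dim L_m(\pi)$. Here I would invoke the explicit description of $L_m(\pi)$ in \S\ref{ss:simple}: for an irreducible $\fG_r$-representation $\pi$ acting on a space of dimension $\dim\pi$, the underlying vector space of $L_m(\pi)$ is $\pi \otimes k^N$, where $N$ is the number of $r$-dimensional subspaces of $\bF^m$. By the subspace-counting interpretation of the $q$-binomial coefficient recalled in \S\ref{ss:qbinom}, $N = \qbinom{m}{r}$, so $\dim L_m(\pi) = \qbinom{m}{r}\cdot\dim\pi$, which is the asserted formula.

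I do not anticipate a real obstacle here: the argument is pure bookkeeping once Proposition~\ref{prop:SWdecomposition} is in hand. The only point requiring a small amount of care is the very first step — checking that the identification $V^{\otimes m}\cong k[\fM_{n,m}]$ is compatible with the $k[\fM_n]$-action and that, after discarding the $k[\fM_m]$-structure, the factor $L_m(\pi)$ genuinely becomes an inert multiplicity space rather than contributing any further module structure — but this is immediate from the definitions.
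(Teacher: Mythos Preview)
Your proposal is correct and follows precisely the paper's intended argument: the corollary is stated without a separate proof, being an immediate consequence of Proposition~\ref{prop:SWdecomposition} via the identification $V^{\otimes m}\cong k[\fM_{n,m}]$ and the dimension formula for $L_m(\pi)$ coming from its explicit description in \S\ref{ss:simple}.
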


\subsection{An $\bF$-linear ``restriction'' problem}

The restriction problem in combinatorial representation theory aims to understand the restriction of an irreducible $\GL_n(\bC)$-representation to the subgroup of permutation matrices $\fS_n$; specifically, the goal is to find a positive combinatorial interpretation for the multiplicity of a Specht module $S^\lambda$ inside a Weyl module $W_\mu$. This problem is considered open: there exist certain formulae \cite{Littlewood, ScharfThibon} that allow one to compute the multiplicities of Specht modules in a given Weyl module, but no satisfactory combinatorial answer is generally available at present, see \cite{Steinberg3}.

Let $\fF_{n,m}$ denotes the set of functions from $[m]$ to $[n]$. We have commuting actions of $\fS_n$ and $\fS_m$ on this set via post- and pre-composition respectively. If we identify $k[\fF_{n,m}]$ with $(k^n)^{\otimes m}$ and apply ordinary Schur--Weyl duality, we can decompose this as a $\GL_n(k) \times \fS_m$ representation. If we knew how to decompose $k[\fF_{n,m}]$ as an $(k[\fS_n], k[\fS_m])$-bimodule, then looking at an $\fS_m$-isotypic component would tell us how to restrict the corresponding Schur--Weyl dual representation from $\GL_n(k)$ to $\fS_n$. Hence we have the following equivalent formulation of the restriction problem:

\begin{problem}
Decompose $k[\fF_{n,m}]$ as a $(k[\fS_n], k[\fS_m])$-bimodule.
\end{problem}

From this perspective a natural $q$-analog of the restriction problem would be to decompose $k[\fM_{n,m}]$ as a $(\fG_n, \fG_m)$-bimodule. Normally one expects problems like this to be harder for finite general linear groups than for symmetric groups,  but amazingly in this case we can completely solve the $\fG_n$-analog of the restriction problem:

\begin{enumerate}
\item First decompose $k[\fM_{n,m}]$ as a $(k[\fM_n], k[\fM_m])$-bimodule according to Proposition~\ref{prop:SWdecomposition}.
\item Then restrict from $k[\fM_n] \times k[\fM_m]$ to $\fG_n \times \fG_m$ using the branching rule in Corollary~\ref{cor:RestrictionCombinatorics}.
\end{enumerate}

We note that in contrast to $\fM_n$, the monoid $\fF_n$ of all functions from $[n]$ to itself has non-semisimple representation theory.  This makes a monoid representation approach to the restriction problem more difficult, under this dictionary the restriction problem becomes more or less equivalent to computing the Cartan matrix for $k[\fF_n]$, which currently seems out of reach.

\appendix

\section{Discovery of the unit formula}

In this appendix, we explain how we used computational tools to discover the formula for the unit of the ideal $\fa_{n,r}$ of $k[\fM_n]$ (\S \ref{s:unit}).

In \cite{Kovacs}, Kov\'acs proved that such unit exists. To show this, he showed that such unit would be supported on semi-idempotent elements, and that a linear combination of such elements would be a unit if and only if the coefficients satisfy a linear system of equations. He showed that after reordering the semi-idempotent matrices by their rank sequences (in the lexicographic order), this linear system of equations is represented by an upper triangular matrix. He also proved that the diagonal entries of this matrix are powers of $q$. Therefore over a field of characteristic not dividing $q$, this system has a solution. See also \cite[Theorem 5.31]{Steinberg}.

In order to find the unit formula, we wrote a program in SageMath that explicitly solves Kov\'acs' linear system. This involves counting matrices obtained by modifying the last columns of a representative of a conjugacy class of a semi-idempotent element. We explain the details below.

If $A \in \fM_n$ is a semi-idempotent element, we define its rank sequence to be the sequence $R\left(A\right) = \left(\rank A^{j}\right)_{j=0}^{\infty}$. This is a weakly decreasing sequence of non-negative integers not greater than $n$. This sequence stabilizes with the value $\srk A$ before or at index $n$. Conjugacy classes of semi-idempotent elements are determined by their rank sequence.

The idea of finding a unit of $\fa_{n,r}$ is to use Lemma \ref{lem:kuhn-criterion-for-unit}. If $\eta_r$ is a unit of $\fa_{n,r}$ and we write
\begin{displaymath}
	\eta_r = \sum_{\substack{u \in \fM_n \\
			u \text{ semi-idempotent}\\
			\rank u \le r}} c_{u} \left[u\right],
\end{displaymath}
then Lemma \ref{lem:kuhn-criterion-for-unit} tells us that $c_{u} = c_{u'}$ if $u$ and $u'$ are conjugate and therefore we can write
\begin{displaymath}
	\eta_r = \sum_{\substack{\Gamma = \left(t_j\right)_{j=0}^{\infty}\\
			t_1 \le r}} \sum_{\substack{u \in \fM_n \\
			u \text{ semi-idempotent}\\
			R\left(u\right) = \Gamma}} c_{\Gamma} \left[u\right].
\end{displaymath}
To find the coefficients $c_{\Gamma}$, we use the second part of Lemma \ref{lem:kuhn-criterion-for-unit} which tells us that \begin{equation}\label{eq:mu-r-times-er-equals-er}
	\eta_r \left[\begin{pmatrix}
		I_{r}\\
		& 0_{n-r}
	\end{pmatrix}\right] = \left[\begin{pmatrix}
		I_{r}\\
		& 0_{n-r}
	\end{pmatrix}\right].
\end{equation} For every rank sequence $\Gamma$ we fix a representative $u_{\Gamma}$ with that rank sequence such that first $\rank u_{\Gamma}$ columns of $u_{\Gamma}$ are non-zero, and the rest of the columns are zero. Comparing the coefficient of $u_{\Gamma}$ of both sides of \eqref{eq:mu-r-times-er-equals-er}, we obtain the system of equations
\begin{displaymath}
	\sum_{\Gamma'} a_{\Gamma,\Gamma'} c_{\Gamma'} = \begin{cases}
		1 & \Gamma = \left(n,r,r,r,\dots\right)\\
		0 & \text{otherwise}
	\end{cases},
\end{displaymath}
where
\begin{displaymath}
	a_{\Gamma,\Gamma'} = \#\{ u \in \fM_n \mid u \text{ semi-idempotent}, R\left(u\right) = {\Gamma}', u \begin{pmatrix}
		I_{r}\\
		& 0_{n-r}
	\end{pmatrix} = u_{\Gamma}  \}.
\end{displaymath}

We set up this system in SageMath. The full code is available at \url{https://github.com/darkl/kovacs-unit}. We describe our core functions:
\begin{itemize}
	\item The function \texttt{is\_semi\_idempotent(T, polynomial\_ring, x)} checks whether the matrix \texttt{T} is semi-idempotent. It does so by checking whether the minimal polynomial of \texttt{T} is of the form $x^{\ell}\left(x-1\right)$ or $x^{\ell}$.
	\item The function \texttt{R(T)} returns the rank sequence of the matrix \texttt{T}.
	\item The function \texttt{partial\_jordan(field, partition)} receives a field and a partition $\lambda$ corresponding to the parameter \texttt{partition}. It returns a matrix $J$ over the field, such that $J$ is conjugate to $J_{\lambda}\left(0\right)$ (see \S \ref{ss:counting-semi-idempotents}), with the property that the first $\rank J$ columns of $J$ are non-zero and the rest of the columns are zero.
	\item The function \texttt{partial\_injective\_jordan(stable\_rank, nilpotent\_partition, n, q)} generates a representative of a semi-idempotent element $u_{\Gamma}$ with a specified stable rank and a given Jordan form corresponding to the zeroth eigenvalue, such that the first $\rank u_{\Gamma}$ columns of $u_{\Gamma}$ are non-zero and the rest are zero. This is achieved by creating a block diagonal matrix consisting of an identity matrix of size \texttt{stable\_rank} as the first block and of \texttt{partial\_jordan(GF(q), nilpotent\_partition)} as the second block.
	\item The function \texttt{semi\_idempotent\_types(n)} generates all tuples of $\left(i, \lambda\right)$ such that $i + \vert \lambda \vert = n$. These tuples are in bijection with all semi-idempotent conjugacy classes in $\fM_{n}$.
\end{itemize}

Given these functions at hand, we implement the function \texttt{get\_semi\_idempotent\_data(n, corank, q\_to\_sample)}. This function receives as its first argument the size of the matrices $n$, as its second input the corank ($n- r$) of interest and as its third input a list of $q$ to sample. It computes the coefficients $a_{\Gamma, \Gamma'}$ for the given collection of $q$ in the following manner: we call \texttt{semi\_idempotent\_types(n)} to get a collection representing the different conjugacy classes of semi-impotent elements in $\fM_{n}$. For each such conjugacy class we generate a nice matrix representative with coefficients in $\bF_q$ using the function \texttt{partial\_injective\_jordan}. We iterate over all the possible modifications the last $n-r$ columns of this representative and check if each modification is a semi-idempotent element of rank $\le n - r$. If it is, we compute the rank sequence of the modified element. Recording these statistics, we eventually compute $a_{\Gamma, \Gamma'}$. The output of the function \texttt{get\_semi\_idempotent\_data} is a nested dictionary with the structure $\Gamma \mapsto \Gamma' \mapsto q \mapsto a_{\Gamma, \Gamma'}\left(q\right)$.

Since we expect to see polynomial expressions in $q$, we interpolate our results. We define a function \texttt{get\_system\_of\_equations\_matrix(n, corank, q\_to\_sample)} that creates a matrix consisting of polynomials in the variable $q$, which we will denote $A_{n,r}\left(q\right)$, such that for the $q$ we sampled $A_{n,r}\left(q\right) = \left(a_{\Gamma, \Gamma'}\left(q\right)\right)$. This is done in two steps: first, the function \texttt{interpolate\_dict(dict\_input)} receives a nested dictionary that has the structure of the output of \texttt{get\_semi\_idempotent\_data} and returns a dictionary with the structure $\Gamma \mapsto \Gamma' \mapsto p_{\Gamma, \Gamma'}\left(q\right)$, where $p_{\Gamma, \Gamma'}\left(q\right)$ is a polynomial interpolating the values of $a_{\Gamma, \Gamma'}\left(q\right)$ for the sampled $q$. Then the function \texttt{dict\_to\_matrix} converts this dictionary to a square matrix $A_{n, r}\left(q\right)$.

The matrix $A_{n,r}\left(q\right)$ is such that the index corresponding to the semi-idempotent conjugacy class of $\left(\begin{smallmatrix}
	I_{r}\\
	& 0_{n-r}
\end{smallmatrix}\right)$ (which is parameterized by $\left(r, \left(1,1,\dots,1\right)\right)$) appears last. Thus the solution to our linear system of interest is the last column of $A_{n,r}\left(q\right)^{-1}$.

We first ran this code for small $n$ ($n=1,2,3,4,5$) and small $q$ ($q=2,3,4,5,7$) and corank $1$ ($r = n-1$):
\begin{lstlisting}
for n in range(1,6):
  print(f"n={n}:")
  (keys, system_matrix) = get_system_of_equations_matrix(n, 1, [2,3,4,5,7])
  matrix_inverse = system_matrix^(-1)
  last_column_interpolated = matrix_inverse.column(system_matrix.dimensions()[0] - 1)
  current_index = 0
  for key in keys:
    print(f"\t{key}:{last_column_interpolated[current_index]}")
    current_index = current_index + 1
  # n = 1:
  #   (1, 0): 1
  # n = 2:
  #   (2, 0, 0): -1/q
  #   (2, 1, 0): -1/q
  #   (2, 1, 1): 1/q
  # n = 3:
  #   (3, 0, 0, 0): 1/q^3
  #   (3, 1, 0, 0): 1/q^3
  #   (3, 1, 1, 1): -1/q^3
  #   (3, 2, 1, 0): 1/q^3
  #   (3, 2, 1, 1): -1/q^3
  #   (3, 2, 2, 2): 1/q^2
  # n = 4:
  #   (4, 0, 0, 0, 0): -1/q^6
  #   (4, 1, 0, 0, 0): -1/q^6
  #   (4, 1, 1, 1, 1): 1/q^6
  #   (4, 2, 0, 0, 0): -1/q^6
  #   (4, 2, 1, 0, 0): -1/q^6
  #   (4, 2, 1, 1, 1): 1/q^6
  #   (4, 2, 2, 2, 2): -1/q^5
  #   (4, 3, 2, 1, 0): -1/q^6
  #   (4, 3, 2, 1, 1): 1/q^6
  #   (4, 3, 2, 2, 2): -1/q^5
  #   (4, 3, 3, 3, 3): 1/q^3
  # ...
\end{lstlisting}
We observed that for $n=1,2,3,4,5$, the last column of the inverse of the matrix in interest consists of elements of the form $\pm q^{-j}$. Running computations for larger $n$ for enough values of $q$ takes a long time. Instead we can run the computations for $q=2$, invert the matrix, take its last column and extract from it the corresponding signs and exponents. This variant of the code runs quite fast. We gave Claude.ai the lists of values for $n=1,2,3,4$ and asked it to predict the values for $n=5$ and $n=6$ (\url{https://claude.ai/share/8a71810d-85bb-4514-b561-74f603843f10}). Its predictions were very close and nearly perfect. Although Claude.ai was not able to explain the pattern, it pointed out that the exponent is a difference of two binomial coefficients, the first one corresponding to $\binom{n}{2}$. Given that insight, we were able to fully identify the pattern for the case $r=n-1$.

Next, we ran a modified version of the code where our goal was to compute $\eta_{r}$ modulo $\fa_{n,r-1}$. This is achieved by modifying the function \texttt{get\_semi\_idempotent\_data(n, corank, q\_to\_sample)} so that it only considers the semi-idempotent matrices of rank exactly $n-r$ obtained by modifying the last $n-r$ columns of the given representative. In this case, the coefficients of the semi-idempotent elements were again negative powers of $q$, up to a sign. Feeding this data to Claude.ai was not helpful this time\\
(\url{https://claude.ai/share/55cd8c75-7572-4381-a06c-21b51a0034f4}).\\
It generated JavaScript code with a conjectural formula for special cases and tests. However, most of these tests failed. Instead, restricting to rank sequences of special types and looking up the sequences of exponents in OEIS allowed us to fully identify the formula for $\eta_{r}$ modulo $\fa_{n,r-1}$.

Next, running the code for the computation of $\eta_{r}$ for small $q$, small $n$ and small $n-r$, we observed that the unknown coefficients are $q$-binomial coefficients. Using this observation, we were eventually able to fully identify the formula for $\eta_{r}$.


\begin{thebibliography}{Mun2}

\bibitem[Car]{Carmon} Oded Carmon. Shalika Vectors for Irreducible Representations of $\mathrm{GL}_n$ over a Finite Field. \textit{Master's thesis, Tel Aviv University} (2023), \href{https://tau.primo.exlibrisgroup.com/discovery/delivery/972TAU_INST:TAU/12435684040004146}{\color{purple}\tiny\tt DaTa:9933623517704146}

\bibitem[CLL${}^+$]{CLLTZ} Rui Chen, Yufeng Li, Xiaohuan Long, Chenhao Tang, Jialiang Zou. Godement--Jacquet L-function and homological theta lifting. \arxiv{2507.07531}

\bibitem[Fad]{Faddeev} D. K. Faddeev. Representations of the full matrix semigroup over a finite field. \textit{Dokl.\ Akad.\ Nauk SSSR} \textbf{230} (1976), no.~6, pp.~1290--1293; English transl., \textit{Soviet Math.\ Dokl.} \textbf{17} (1976), pp.~1483--1486.

\bibitem[Hal]{Halverson} Tom Halverson. Representations of the $q$-rook monoid. \textit{J.\ Algebra} \textbf{273} (2004), no.~1, pp.~227--251. \DOI{10.1016/j.jalgebra.2003.11.002}

\bibitem[HR]{HR} Tom Halverson, Arun Ram. $q$-rook monoid algebras, Hecke algebras, and Schur--Weyl duality. \textit{J.\ Math.\ Sci} \textbf{121} (2004), pp.~2419--2436. \DOI{10.1023/B:JOTH.0000024623.99412.13} \arxiv{math/0401330}

\bibitem[HSZ]{HSZ} Nate Harman, Andrew Snowden, Elad Zelingher. Jacobi categories. In preparation.

\bibitem[Gre]{Green} J.~A.~Green. The characters of the finite general linear groups. \textit{Trans.\ Amer.\ Math.\ Soc.} \textbf{80} (1955), no.~2, pp.~402--447. \DOI{10.2307/1992997}

\bibitem[GH1]{GH1} Shamgar Gurevich, Roger Howe. The Pieri rule for $\GL_n$ over finite fields. In R.~E.~Curto,
W.~Helton, H.~Lin, X.~Tang, R.~Yang, and G.~Yu, eds., Operator Theory, \textit{Operator algebras and their interactions with geometry and topology: Ronald G. Douglas Memorial Volume}, pp.~313--332, Springer, 2020. \DOI{10.1007/978-3-030-43380-2\_14} \arxiv{2105.11520}

\bibitem[GH2]{GH2} Shamgar Gurevich, Roger Howe. Harmonic analysis on $\GL_n$ over finite fields. \textit{Pure Appl.\ Math.\ Q} \textbf{17} (2021), no.~4, pp.~1387--1463. \DOI{10.4310/PAMQ.2021.v17.n4.a7} \arxiv{2105.12369}

\bibitem[Kov]{Kovacs} L. G. Kov\'acs. Semigroup algebras of the full matrix semigroup over a finite field. \textit{Proc.\ Amer.\ Math.\ Soc} \textbf{116} (1992), no.~4, pp.~911--919. \DOI{10.2307/2159467}

\bibitem[JWL]{JWL} Naihuan Jing, Yu Wu, Ning Liu. Irreducible characters and bitrace for the $q$-rook monoid. \textit{Comm.\ Algebra}, to appear. \DOI{10.1080/00927872.2025.2554338} \arxiv{2312.13681}

\bibitem[JS]{JoyalStreet} Andr\'e Joyal, Ross Street. The category of representations of the general linear groups over a finite field. \textit{J.\ Algebra} \textbf{176} (1995), no.~3, pp.~908--946. \DOI{10.1006/jabr.1995.1278}

%\bibitem[Kuh1]{Kuhn1} Nicholas J. Kuhn. Generic representations of the finite general linear groups and the Steenrod algebra: I. \textit{Amer.\ J.\ Math.} \textbf{116} (1994), no.~2, pp.~327--360. \DOI{10.2307/2374932}

\bibitem[Kuh]{Kuhn2} Nicholas J. Kuhn. Generic representation theory of finite fields in nondescribing characteristic. \textit{Adv.\ Math.} \textbf{272} (2015), pp.~598--610. \DOI{10.1016/j.aim.2014.12.012} \arxiv{1405.0318}

\bibitem[Lit]{Littlewood} D.~E.~Littlewood. Products and plethysms of characters with orthogonal, symplectic and symmetric groups. \textit{Canad.\ J.\ Math.} \textbf{10} (1958), 17--32. \DOI{10.4153/CJM-1958-002-7}

\bibitem[Mun1]{Munn1} W.~D.~Munn. Matrix representations of semigroups. \textit{Proc.\ Cambridge Philos.\ Soc.} \textbf{53} (1957), pp.~5--12. \DOI{10.1017/S0305004100031935}

\bibitem[Mun2]{Munn2} W.~D.~Munn. The characters of the symmetric inverse semigroup. \textit{Proc.\ Cambridge Philos.\ Soc.} \textbf{53} (1957), pp.~13--18. \DOI{10.1017/S0305004100031947}

\bibitem[OP]{OP} Jan Okni\'nski, Mohan S.~Putcha, Complex representations of matrix semi-groups. \textit{Trans.\ Amer.\ Math.\ Soc.} \textbf{323} (1991), no.~2, pp.~563--581. \DOI{https://doi.org/10.2307/2001545}

\bibitem[ST]{ScharfThibon} T.~Scharf and J.-Y.~Thibon. A Hopf-algebra approach to inner plethysm. \textit{Adv.\ Math.} \textbf{104} (1994), no.~1, 30--58. \DOI{10.1006/aima.1994.1019}

\bibitem[Sol1]{Solomon1} Louis Solomon. The Bruhat decomposition, Tits system and Iwahori ring for the monoid of matrices over a finite field. \textit{Geom.\ Dedicata} \textbf{36} (1990), pp.~15--49. \DOI{10.1007/BF00181463}

\bibitem[Sol2]{Solomon2} Louis Solomon. An introduction to reductive monoids. In \textit{Semigroups, formal languages and groups}, NATO ASI Ser., Ser.~C, Math.\ Phys.\ Sci.\ textbf{466}, Kluwer, 1995.

\bibitem[Sol3]{Solomon3} Louis Solomon. Representations of the rook monoid. \textit{J.\ Algebra} \textbf{256} (2002), no.~2, pp.~309--342. \DOI{10.1016/S0021-8693(02)00004-2}

\bibitem[Sol4]{Solomon4} Louis Solomon. The Iwahori algebra of $\rM_n(\bF_q)$. A presentation and a representation on tensor space. \textit{J.\ Algebra} \textbf{273} (2004), no.~1, pp.~206--226. \DOI{10.1016/j.jalgebra.2003.08.013}

\bibitem[Ste1]{Steinberg} Benjamin Steinberg. Representation theory of finite monoids. Universitext. Springer, Cham, 2016. \DOI{10.1007/978-3-319-43932-7}

\bibitem[Ste2]{Steinberg2} Benjamin Steinberg. Factoring the Dedekind--Frobenius determinant of a semigroup. \textit{J.\ Algebra} \textbf{605} (2022), pp.~1--36. \DOI{10.1016/j.jalgebra.2022.01.040} \arxiv{2108.02166}

\bibitem[Ste3]{Steinberg3} B.~Steinberg. Decomposing $(\mathbb C^n)^{\otimes m}$ as a representation of $S_n\times S_m$. \textit{MathOverflow}, 2015. \href{https://mathoverflow.net/q/198872}{\tiny\tt Mathoverflow:198872}

\end{thebibliography}
\end{document}